\newcommand\numberthis{\addtocounter{equation}{1}\tag{\theequation}}
\newcommand{\Cc}{\mathcal{C}}
\newcommand{\Gc}{\mathcal{G}}
\newcommand{\Ic}{\mathcal{I}}
\newcommand{\Jc}{\mathcal{J}}
\newcommand{\Lc}{\mathcal{L}}
\newcommand{\Pc}{\mathcal{P}}
\newcommand{\Sc}{\mathcal{S}}
\newcommand{\Xc}{\mathcal{X}}
\newcommand{\Yc}{\mathcal{Y}}
\def\R{\mathbb{R}}
\newcommand{\pth}[1]{\left( #1 \right)}
\newcommand{\qth}[1]{\left[ #1 \right]}
\newcommand{\nth}[1]{\left\| #1 \right\|}
\newcommand{\argmin}{\mathop{\rm argmin}}
\newcommand{\argmax}{\mathop{\rm argmax}}
\newcommand{\st}{\text{s.t. }}
\theoremstyle{plain}
\newtheorem{theorem}{Theorem}[section]
\newtheorem{lemma}[theorem]{Lemma}
\newtheorem{corollary}[theorem]{Corollary}
\theoremstyle{definition}
\newtheorem{definition}[theorem]{Definition}
\newtheorem{assumption}[theorem]{Assumption}
\theoremstyle{remark}
\title{A Single-Loop First-Order Algorithm for Linearly Constrained Bilevel Optimization}
\author{%
  Wei Shen\\
  University of Virginia\\
  \texttt{zyy5hb@virginia.edu} \\
  \And
    Jiawei Zhang \\
  University of Wisconsin-Madison \\
  \texttt{jzhang2924@wisc.edu} \\
  \AND
  Minhui Huang \\
  Meta \\
  \texttt{mhhuang@meta.com} \\
  \And
  Cong Shen \\
  University of Virginia\\
   \texttt{cong@virginia.edu}\\
}
\begin{document}

\maketitle

\begin{abstract}
We study bilevel optimization problems where the lower-level problems are strongly convex and have coupled linear constraints. To overcome the potential non-smoothness of the hyper-objective and the computational challenges associated with the Hessian matrix, we utilize penalty and augmented Lagrangian methods to reformulate the original problem as a single-level one. Especially, we establish a strong theoretical connection between the reformulated function and the original hyper-objective by characterizing the closeness of their values and derivatives. Based on this reformulation, we propose a single-loop, first-order algorithm for linearly constrained bilevel optimization (SFLCB). We provide rigorous analyses of its non-asymptotic convergence rates, showing an improvement over prior double-loop algorithms -- form $O(\epsilon^{-3}\log(\epsilon^{-1}))$ to $O(\epsilon^{-3})$. The experiments corroborate our theoretical findings and demonstrate the practical efficiency of the proposed SFLCB algorithm. Simulation code is provided at \url{https://github.com/ShenGroup/SFLCB}.
\end{abstract}

\section{Introduction}
\label{section: introduction}

In recent years, bilevel optimization (BLO) has gained significant popularity for addressing a wide range of modern machine learning problems, such as hyperparameter optimization \citep{pedregosa2016hyperparameter, franceschi2018bilevel, mackay2019self}, data hypercleaning \citep{shaban2019truncated}, meta learning \citep{rajeswaran2019meta, ji2020convergence}, reinforcement learning \citep{sutton2018reinforcement,hong2023two} and neural architecture search \citep{liu2018darts, liang2019darts+}; see survey papers \citep{zhang2024introduction,liu2021investigating,sinha2017review} for additional discussions. While numerous works for unconstrained BLO problems have been proposed \citep{ghadimi2018approximation, liu2020generic, ji2021bilevel, dagreou2022framework, hong2023two, kwon2023fully}, studies focusing on {constrained} BLO problems are relatively limited.

In this paper, we consider the following BLO problem where the lower-level (LL) problem has \emph{coupled constraints}:
\begin{align}\label{problem}
    &\min_{x\in \Xc} \quad \Phi(x) \triangleq f(x,y^*(x))\\ \nonumber
    &\st \quad y^*(x)\in\arg\min_{y\in \Yc(x)} g(x,y).
\end{align}
The upper-level (UL) objective function $f: \mathbb{R}^{d_x} \times \mathbb{R}^{d_y} \to \mathbb{R}$ and the lower-level objective function $g: \mathbb{R}^{d_x} \times \mathbb{R}^{d_y} \to \mathbb{R}$ are continuously differentiable. Moreover, we assume that $g(x,y)$ is strongly convex with respect to $y$. The feasible sets are defined as
$\mathcal{X} = \mathbb{R}^{d_x}, \mathcal{Y}(x) = \{\, y \in \mathbb{R}^{d_y} \mid h(x,y) \le 0 \,\}$
where $h: \mathbb{R}^{d_x} \times \mathbb{R}^{d_y} \to \mathbb{R}^{d_h}$.

For this setting, we develop a single loop algorithm for the special case $h(x,y)=Bx+Ay-b$, where $B\in \R^{d_h\times d_x}$ and $A\in \R^{d_h\times d_y}$.   This special class of constrained BLO problems covers a wide class of applications, including distributed optimization \citep{yang2022decentralized}, hyperparameter optimization of constrained learning problems \citep{xu2023efficient} and adversarial training \citep{zhang2022revisiting} and draw significant attentions \citep{tsaknakis2022implicit, khanduri2023linearly, kornowski2024first}.

A popular approach for solving unconstrained BLO is implicit gradient descent \citep{ghadimi2018approximation, ji2021bilevel,ji2022will, chen2022single, khanduri2021near}. For constrained BLO, several studies have extended this approach to accommodate different constraint settings \citep{tsaknakis2022implicit, khanduri2023linearly, xu2023efficient, xiao2023alternating}. However, these implicit gradient-based methods in constrained BLO necessitate computing the Hessian matrix of the lower-level problem \citep{tsaknakis2022implicit, khanduri2023linearly, xu2023efficient, xiao2023alternating}. The potential computational challenges
associated with the Hessian matrix limit their practical applicability for large-scale problems.

Recently, some first-order methods \citep{kwon2023penalty,yao2024constrained, yao2024overcoming, jiang2024primal, kornowski2024first} have been proposed for addressing constrained BLO problems. Most of those works considered transforming the original problem \eqref{problem} into a single-level one and trying to find the stationary points of the reformulated problem. For example, \cite{yao2024constrained, yao2024overcoming} reformulated the original problem into some approximated functions and proposed single-loop algorithms for finding the stationary point of the approximated problem. However, neither \cite{yao2024constrained} nor \cite{yao2024overcoming} establishes clear relationships between the stationary points of their approximated problems and the original one. 

Works most closely related to ours are those by \cite{kwon2023penalty} and \cite{jiang2024primal}, both of which reformulated the problem \eqref{problem} as
\begin{align}\label{problem signle}
    \min_{x\in\Xc, y\in\Yc(x)} f(x,y) \quad \st g(x,y)-\min_{z\in\Yc(x)}g(x,z)\leq 0,
\end{align}
and considered optimizing the following function with a penalty parameter $\delta$:
\begin{align}\label{eq: reformulation}
    \min_{x\in \Xc}[\Phi_\delta(x)\triangleq\min_{y\in\Yc(x)}\max_{z\in\Yc(x)}\Phi_\delta(x,y,z)]
\end{align}
where $\Phi_\delta(x,y,z)=f(x,y)+ \frac{1}{\delta}[g(x,y)-g(x,z)]$.
Based on this  reformulation, \cite{kwon2023penalty} proposed algorithms for solving BLO with LL constraints $y\in \Yc$  and \cite{jiang2024primal} proposed algorithms for solving coupled LL constraints $\Yc(x)=\{y\in \R^{d_y} | h(x,y)\leq 0\}$. However, \cite{kwon2023penalty} only considered the LL constraints $\Yc$ that are independent of $x$ and their methods require projection oracle to $\Yc$ at each iteration. Algorithms in \cite{jiang2024primal} require complex double or triple loops, resulting in sub-optimal convergence rates and difficult implementation. Moreover, the connection between the stationary point of the reformulated function $\Phi_\delta$
and the original hyper-objective $\Phi$ is not discussed in \citep{kwon2023penalty, jiang2024primal} for coupled constraints $\Yc(x)$. 

To address these limitations, in this paper, we establish a rigorous theoretical justification for this reformulation \eqref{eq: reformulation} and propose a single-loop Hessian-free algorithm for the linearly constrained cases.
Our main contributions can be summarized as follows.

\begin{itemize}[noitemsep,topsep=0pt,leftmargin = *]
    \item We establish a rigorous theoretical connection between the reformulated function $\Phi_\delta$ and the original hyper-objective $\Phi$ by proving the closeness of their values and derivatives under coupled constraints $\Yc(x)=\{y\in \R^{d_y} | h(x,y)\leq 0\}$ with certain assumptions, which provides strong justifications for the reformulation \eqref{eq: reformulation}.
    \item Based on this reformulation and equipped with augmented Lagrangian methods, 
    we proposed SFLCB, a single-loop, first-order algorithm for linearly constrained bilevel optimization problem,
    and provide rigorous analyses of its non-asymptotic convergence rates, achieving an improvement in the convergence rate from $O(\epsilon^{-3}\log(\epsilon^{-1}))$ to $O(\epsilon^{-3})$ compared to prior works (See \Cref{tab:comparison} for a more comprehensive comparison of our work with previous studies). The simple single-loop structure also makes our algorithm easier to implement in practice compared to \cite{jiang2024primal}.
    \item Our experiments on hyperparameter optimization in the support vector machine (SVM) and transportation network design problems validate the practical effectiveness and efficiency of the proposed SFLCB algorithm.
\end{itemize}

\begin{table*}[h]
    \centering
    \caption{Comparison of our paper with \citep{kwon2023penalty, jiang2024primal}. More detailed introductions and discussions of other related works can be found in \Cref{section: related works}. Here, the ``Complexity'' means the iteration complexity needed to achieve the $\epsilon$-stationary point of $\Phi_\delta$ \eqref{eq: reformulation}. }
    \vskip 0.1in
    \begin{tabular}{l l l c }
        \toprule
        \textbf{Methods} & \textbf{LL Constraint} & \textbf{Complexity}  & \textbf{Loops} \\
        \midrule
        \cite{kwon2023penalty} & $y \in \Yc$, $\Yc$ is a convex and compact set & $O(\epsilon^{-3}\log(\epsilon^{-1}))$  & single/double \\
        \cite{jiang2024primal} & $h(x,y) \leq 0$, LICQ holds & $O(\epsilon^{-5} \log(\epsilon^{-1}))$  & triple \\
        \cite{jiang2024primal} & $B(x) + A(x)y \leq 0$, $A(x)$ is full row rank & $O(\epsilon^{-3} \log(\epsilon^{-1}))$  & double \\
        SFLCB (ours) & $Bx + Ay - b \leq 0$, $A$ is full row rank & $O(\epsilon^{-3})$  & single \\
        SFLCB (ours) & $Ay \leq 0$, LICQ holds at the initial points & $O(\epsilon^{-3})$  & single \\
        SFLCB (ours) & $Ay \leq 0$ & $O(\epsilon^{-4})$  & single \\
        \bottomrule
    \end{tabular}
    \label{tab:comparison}
\end{table*}

\section{Related works}\label{section: related works}

\textbf{BLO without constraints.}
One popular approach for solving unconstrained BLO is to use implicit gradient descent methods \citep{pedregosa2016hyperparameter}. It is well established that when the LL problem is strongly convex and unconstrained, $y^*(x)=\argmin_y g(x,y)$ exists and is differentiable, and the gradient of the hyper-objective can be calculated by $\nabla \Phi(x)=\nabla_x f(x,y)+(\nabla y^*(x))^\top \nabla_y f(x,y^*(x))$ \citep{ghadimi2018approximation}. Later works improved the convergence rates and studied the gradient descent methods under various settings \citep{ji2021bilevel, ji2022will, chen2022single, khanduri2021near, yang2023accelerating}. Another popular approach is based on iterative differentiation, which iteratively solves the LL problems and computes $\nabla y^*(x)$ to approximate the hypergradient \citep{maclaurin2015gradient, grazzi2020iteration,liu2021towards, bolte2022automatic}.
Recently, penalty-based methods have gained traction as a promising approach for solving BLO. Those works usually reformulate the original BLO as the single-level one and use the first-order methods to find the stationary point of the reformulated problems \citep{liu2021value, mehra2021penalty, liu2022bome, kwon2023fully, shen2023penalty, gao2022value, lu2024slm, lu2024first}.

\textbf{BLO with constraints.}
There are two primary types of methods for solving constrained bilevel optimization problems. One is based on the implicit gradient method. Generally, when the LL problem has constraints, the differentiabilities of $y^*(x)$ and $\Phi(x)$ are not guaranteed \citep{khanduri2023linearly}. 
\cite{tsaknakis2022implicit} proved the existence of $\nabla \Phi(x)$ under additional assumptions for linearly constraint $Ay\leq b$ and proposed an implicit gradient-type double-loop algorithm.  \cite{khanduri2023linearly} proposed a perturbation-based smoothing technique to compute the approximate implicit gradient for linearly constraint $Ay\leq b$. \cite{xu2023efficient} used Clarke subdifferential to approximate the non-differentiable implicit function $\Phi$. However, they only provided an asymptotic convergence analysis of their algorithm. \cite{xiao2023alternating} proved the existence of $\nabla \phi$  where the LL has equality constraints $Ay+H(x)=c$, and introduced an alternating projected SGD approach to solve this problem.
However, these implicit gradient-type algorithms \citep{tsaknakis2022implicit, khanduri2023linearly, xu2023efficient, xiao2023alternating} require the computations for the Hessian matrix of the LL problems, which potentially limit their practical applicability for large-scale problems.

Another commonly used approach for solving constrained BLO problems is based on penalty reformulation. For example, \cite{lu2024first} reformulated unconstrained and constrained BLO problems as structured minimax problems and introduced first-order methods with guarantees for finding $\epsilon$-KKT solutions.
\cite{yao2024constrained} reformulated the original problem into a proximal Lagrangian value function and proposed a single-loop, first-order method to find the stationary points of the reformulated value function. However, their algorithm requires the implementation of the projection operator on $\Cc=\{x,y|h(x,y)\leq 0\}$ at each iteration, which can be potentially costly. 
\cite{yao2024overcoming} reformulated the original problem into a doubly regularized gap function and proposed a single-loop, first-order algorithm. Compared to \cite{yao2024constrained},  \cite{yao2024overcoming} did not need
the projection operator to the coupled constraint set. However,
both \cite{yao2024constrained} and \cite{yao2024overcoming} did not establish very clear relationships between the stationary points of their approximated problems and the original one. For example,
\cite{yao2024overcoming} only provided an asymptotic relationship between the original problem and their reformulated one, i.e., as their penalty parameter approaches infinite, their reformulated problem is equivalent to the original one. Recently, \cite{tsaknakis2023implicit, jiang2024barrier} proposed algorithms based on barrier approximation approach for constrained BLO problems. However, their algorithms also require the computations for the Hessian matrix.

\cite{kwon2023penalty} and \cite{jiang2024primal} considered the same  reformulation as ours. \cite{kwon2023penalty} studied the case where the LL variables $y \in \Yc$ are independent of $x$ and characterized the conditions under which the values and derivatives of $\Phi$ and $\Phi_\delta$ can be $O(\delta)$-close for $y \in \Yc$ constraints. Compared with \cite{kwon2023penalty}, we prove similar results {under} coupled constraints $\Yc(x)=\{y\in \R^{d_y} | h(x,y)\leq 0\}$. Moreover, the algorithms in \cite{kwon2023penalty} require the implementation of the projection operator to $\Yc$ at each iteration, which can be costly. \cite{jiang2024primal} studied the coupled constraints $\Yc(x)=\{y\in \R^{d_y} | h(x,y)\leq 0\}$. While \cite{jiang2024primal} considered more general constraints than ours, however, it did not characterize the gap between the stationary point of the reformulated function $\Phi_\delta$ and the original hyper-objective $\Phi$. Our \Cref{thm: closeness of nabla Phi} provides further justifications for their reformulation in coupled constraints. Moreover, compared with the double- and triple-loop algorithms in \cite{kwon2023penalty}, we propose a single-loop algorithm SFLCB and prove an improvement in the convergence rate from $O(\epsilon^{-3}\log(\epsilon^{-1}))$ to $O(\epsilon^{-3})$. 

Recently, \cite{kornowski2024first} also proposed first-order methods for linearly constrained BLO. Especially, they proved a nearly optimal convergence rate $\Tilde{O}(\epsilon^{-2})$ for linear equality constraints and proposed algorithms that can attain $(\delta, \epsilon)$-Goldstein stationarity for linear inequality constraints. However, their convergence rates for linear inequality constraints either have additional dependence on dimension $d$ (such as $\Tilde{O}(d\delta^{-1}\epsilon^{-3})$) or need additional assumptions to access the \emph{exact} optimal dual variable (such as $\Tilde{O}(\delta^{-1}\epsilon^{-4})$), while we do not require the exact optimal dual variable assumption. Compared with the double-loop algorithms in \cite{kornowski2024first}, our proposed single-loop one is easier to implement in practice. Moreover, our techniques are also different from theirs under linear inequality constraints, thereby highlighting the distinct contributions and independent interests of our work.

\section{Preliminaries}
\textbf{Notation.} For vectors $a,b\in\R^d$, we denote $a\leq b$ if for all $i\in[d]$, $a_i\leq b_i$. We use $\|\cdot\|$ to denote the $l_2$ norm of a vector and the spectral norm of a matrix.
We define the projection operator that project $x$ to a set $\Pc$ as $\Pi_\Pc(x)=\argmin_{x'\in \Pc}\frac{1}{2}\|x-x'\|^2$. We denote the projection operator that projects a $x\in \R^d$ to the set $\R^d_-$ as $\Pi_-(x)$.

We state the following assumptions for problem \eqref{problem}, which are commonly used in the theoretical studies of BLO.

\begin{assumption}\label{assumption: phi}
For any $x\in \Xc$, $\Yc(x)$ is nonempty, closed, and convex and $\Phi(x)$ is lower bounded by a finite, $\Phi^*=\inf_{x\in \Xc}\Phi(x)\geq -\infty$.
\end{assumption}

\begin{assumption}\label{assumption: smooth}
$f, \nabla f, \nabla g$  are Lipschitz continuous with $l_{f,0},  l_{f,1}, l_{g,1}$ respectively, jointly over $\Xc \times \Yc(x)$.

\end{assumption}

\begin{assumption}\label{assumption: sc}
For any fixed $x\in \Xc$, $g(x,y)$ is $\mu_{g}$-strongly convex with respect to $y\in \Yc(x)$.
\end{assumption}

We introduce the standard definition of the $\epsilon$-stationary point for a differentiable function.
\begin{definition}
\label{def: eps}
We say $\hat{x}$ is an $\epsilon$-stationary point of a differentiable function $f$ if $\|\nabla f(\hat{x})\|\leq\epsilon$.
\end{definition}

\section{Reformulation}\label{section: Reformulation}
In this section, we provide a theoretical justification for our reformulation \eqref{eq: reformulation} and establish the conditions under which the function values and gradients of the reformulated function $\Phi_\delta$ and the original hyper-objective $\Phi$ become sufficiently close. Note that in this section, we considered general coupled constraints $\Yc(x)=\{y\in \R^{d_y} | h(x,y)\leq 0\}$ which include, but are not limited to, the linear constraint case.  The complete proofs for the lemmas and theorems in this section can be found in \Cref{app: Reformulation}.

First, we assume $\delta\leq \frac{\mu_g}{2l_{f,1}}$ and introduce the following notations:
\begin{align*}
    &y^*_\delta(x)=\argmin_{y\in \Yc(x)} \delta f(x,y)+g(x,y)\\
    &y^*(x)=z^*(x)=\argmin_{y\in \Yc(x)}  g(x,y) \\
    &\phi_\delta(x,y,z)=\delta\Phi_\delta(x,y,z)\\
    &\phi_\delta(x)=\phi_\delta(x,y^*_\delta(x),z^*(x)).
\end{align*}

Similar to Theorem 3.8 in \cite{kwon2023penalty}, we have the following theorem to bound the difference between $\Phi$ and $\Phi_\delta$, as well as $y^*(x)$ and $y_\delta^*(x)$ in the coupled constraints.
\begin{theorem}\label{thm: reformulation}
When \Cref{assumption: phi}, \ref{assumption: smooth} and \ref{assumption: sc} hold, we have
    \begin{align*}
    &0\leq \Phi(x)-\Phi_\delta(x)\leq \frac{\delta l_{f,0}^2}{2\mu_g}, \quad \|y_\delta^*(x)-y^*(x)\|\leq\frac{2\delta l_{f,0}}{\mu_g}.
    \end{align*}
\end{theorem}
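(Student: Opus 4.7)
The plan is first to simplify $\Phi_\delta(x)$ in closed form. Because $z$ enters $\Phi_\delta(x,y,z)$ only through $-\tfrac{1}{\delta}g(x,z)$ and $z$ ranges over $\Yc(x)$, the inner maximum and outer minimum decouple, giving $\Phi_\delta(x)=f(x,y_\delta^*(x))+\tfrac{1}{\delta}\bigl[g(x,y_\delta^*(x))-g(x,y^*(x))\bigr]$, where the max is achieved at $z^*(x)=y^*(x)$ as defined in the preceding display. The lower bound $\Phi(x)-\Phi_\delta(x)\ge 0$ then follows by substituting the feasible point $y=y^*(x)$ into this minimization, since the bracketed term vanishes and one recovers $f(x,y^*(x))=\Phi(x)$.

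To control $\|y_\delta^*(x)-y^*(x)\|$, I would combine two inequalities. First, $\mu_g$-strong convexity of $g(x,\cdot)$ together with the constrained first-order optimality $\langle \nabla_y g(x,y^*(x)),\, y-y^*(x)\rangle\ge 0$ for all $y\in\Yc(x)$ (which uses convexity of $\Yc(x)$ from \Cref{assumption: phi}) yields
\[
g(x,y_\delta^*(x))-g(x,y^*(x))\ \ge\ \tfrac{\mu_g}{2}\,\|y_\delta^*(x)-y^*(x)\|^2.
\]
Second, testing the optimality of $y_\delta^*(x)$ for $\delta f+g$ over $\Yc(x)$ against the feasible competitor $y^*(x)$ and invoking $l_{f,0}$-Lipschitz continuity of $f$ in $y$ gives
\[
g(x,y_\delta^*(x))-g(x,y^*(x))\ \le\ \delta\bigl[f(x,y^*(x))-f(x,y_\delta^*(x))\bigr]\ \le\ \delta\, l_{f,0}\,\|y_\delta^*(x)-y^*(x)\|.
\]
Dividing the combined inequality by $\|y_\delta^*(x)-y^*(x)\|$ yields $\|y_\delta^*(x)-y^*(x)\|\le \tfrac{2\delta l_{f,0}}{\mu_g}$.

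For the upper bound on $\Phi(x)-\Phi_\delta(x)$, I would plug the closed form from Step 1 back in, writing
\[
\Phi(x)-\Phi_\delta(x)\ =\ \bigl[f(x,y^*(x))-f(x,y_\delta^*(x))\bigr]\ -\ \tfrac{1}{\delta}\bigl[g(x,y_\delta^*(x))-g(x,y^*(x))\bigr],
\]
bound the first bracket above by $l_{f,0}\|y^*(x)-y_\delta^*(x)\|$ and the second from below by the strong-convexity inequality of Step 2, and then maximize the resulting concave quadratic $l_{f,0}\,t-\tfrac{\mu_g}{2\delta}\,t^2$ over $t=\|y^*(x)-y_\delta^*(x)\|\ge 0$. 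The maximum value $\tfrac{\delta l_{f,0}^2}{2\mu_g}$ is attained at $t=\delta l_{f,0}/\mu_g$, which is the desired bound; note in particular this also uses $\delta\le\tfrac{\mu_g}{2l_{f,1}}$ only implicitly via the well-posedness of $y_\delta^*(x)$.

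The main subtlety, and the only place where the generalization from the $x$-independent constraint set in \citep{kwon2023penalty} to a coupled set $\Yc(x)$ would matter, is verifying that the strong-convexity arguments survive the presence of constraints. Since $x$ is held fixed, $\Yc(x)$ is a convex set and the variational inequality from constrained optimality eliminates the linear term in the Taylor expansion of $g$ at $y^*(x)$; everything else proceeds verbatim as in the unconstrained setting. Crucially, nothing about how $\Yc(x)$ varies in $x$ is invoked at this stage — such regularity only becomes relevant when bounding the derivative gap $\nabla\Phi_\delta-\nabla\Phi$ in the subsequent theorems.
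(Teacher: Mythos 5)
Your proof is correct, and for the key quantitative step it takes a genuinely different route from the paper. For the two function-value bounds you argue exactly as the paper does: feasibility of $y^*(x)$ in the minimization defining $\Phi_\delta$ gives $\Phi_\delta(x)\le\Phi(x)$, and the upper bound $\frac{\delta l_{f,0}^2}{2\mu_g}$ comes from combining Lipschitzness of $f$ with the quadratic growth of $g$ at its constrained minimizer (which, as you correctly note, needs the variational inequality $\langle\nabla_y g(x,y^*(x)),y-y^*(x)\rangle\ge 0$ and hence convexity of $\Yc(x)$, but nothing about how $\Yc(x)$ varies with $x$) and then optimizing the resulting concave quadratic in $t=\|y^*_\delta(x)-y^*(x)\|$. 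The difference is in how you bound $\|y_\delta^*(x)-y^*(x)\|$: the paper characterizes $y_\delta^*(x)$ as a fixed point of the projected-gradient map and invokes an external error-bound lemma (\Cref{lemma: eb}, Lemma 12 of Chen et al.) together with nonexpansiveness of the projection, which requires the smoothness constant $L_\phi$ and a step-size condition; you instead test the optimality of $y_\delta^*(x)$ for $\delta f+g$ against the competitor $y^*(x)$ and combine it with the quadratic growth of $g$ at $y^*(x)$, obtaining $\frac{\mu_g}{2}\|y_\delta^*-y^*\|^2\le \delta l_{f,0}\|y_\delta^*-y^*\|$ and hence the identical constant $\frac{2\delta l_{f,0}}{\mu_g}$. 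Your argument is more elementary and self-contained — it never uses smoothness of $g$ or the projection machinery, only strong convexity of $g$, Lipschitzness of $f$, and convexity of the feasible set — whereas the paper's projection-based argument is the one that generalizes naturally to the algorithmic error bounds (e.g.\ \Cref{lemma: error bounds}) used later in the convergence analysis, which is presumably why the authors reuse it here. One cosmetic caveat: when dividing by $\|y_\delta^*(x)-y^*(x)\|$ you should note the bound is trivial if that norm is zero, and the condition $\delta\le\mu_g/(2l_{f,1})$ is indeed only needed so that $\delta f+g$ remains strongly convex and $y_\delta^*(x)$ is well defined, exactly as you say.
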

\Cref{thm: reformulation} characterizes how the difference in function values and the optimal LL variables between the reformulated problem and the original one are controlled by the penalty parameter $\delta$. 
Therefore, by choosing a sufficiently small $\delta$, i.e., $\delta=O(\epsilon)$, we can treat the reformulated problem $\min_{x\in \Xc}\Phi_\delta(x)$ as an approximation of the original problem and solve this approximated problem instead. In the following lemmas, we will provide the conditions under which the reformulated function $\Phi_\delta(x)$ is differentiable. Before that, we first introduce the well-known and commonly used Linear Independence Constraint Qualification (LICQ) condition.

\begin{definition}[Active set]
    We denote $\Ic_{y}\subseteq [d_h]$ as the active set of $y$, i.e. $\Ic_{y}=\{i\in [d_h] \mid h_i(x,y)=0\}$.
\end{definition}

\begin{definition}[LICQ]\label{def: licq}
   We say a point $y$ satisfy the LICQ condition if, for all $i\in \Ic_{y}$,  $\nabla_y h_i(x,y)$ are linearly independent.
\end{definition}

Then, similar to Lemmas 2 and 3 in \cite{jiang2024primal}, we have the following lemma.
\begin{lemma}\label{lemma: nabla phi delta}
    When \Cref{assumption: phi}, \ref{assumption: smooth}, \ref{assumption: sc} hold and $\delta \leq \mu_g/(2l_{f,1})$, if, for all $x\in \Xc$, the LICQ condition (\Cref{def: licq}) holds for $y^*(x)$ and $y^*_\delta(x)$, then there exist the corresponding unique Lagrangian multipliers $\lambda^*(x)\in \R^{d_h}$ and $\lambda^*_\delta(x)\in \R^{d_h}$ such that
    \begin{align}\label{eq: lambda}
        &\lambda^*(x)=\argmax_{\lambda \in \R_+}\min_{y \in \Yc(x)}g(x,y)+\lambda^\top h(x,y)\\ \label{eq: lambda delta}
        &\lambda^*_\delta(x)=\argmax_{\lambda \in \R_+}\min_{y \in \Yc(x)}\delta f(x,y)+g(x,y)+\lambda^\top h(x,y).
    \end{align}
    Furthermore, we have
    \begin{align*}
        \nabla \Phi_\delta(x)&=\nabla_x f(x, y^*_\delta(x))+\frac{1}{\delta}[\nabla_x g(x, y^*_\delta(x))+\nabla_x h(x, y^*_\delta(x))\lambda^*_\delta(x)\\
        &\qquad -\nabla_x g(x, y^*(x))-\nabla_x h(x, y^*(x))\lambda^*(x)].
    \end{align*}
\end{lemma}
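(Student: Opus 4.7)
The plan is to split the lemma into the multiplier statement and the gradient formula, and to base the gradient computation on the observation that
\begin{equation*}
\Phi_\delta(x)=\frac{1}{\delta}\bigl[V_\delta(x)-V(x)\bigr],\qquad
V(x)=\min_{y\in\Yc(x)}g(x,y),\qquad
V_\delta(x)=\min_{y\in\Yc(x)}\bigl[\delta f(x,y)+g(x,y)\bigr],
\end{equation*}
so that $\nabla\Phi_\delta(x)$ reduces to a linear combination of two envelope gradients. This rewriting makes it transparent why the $\nabla_y f$ and $\nabla_y g$ terms that one would naively pick up from chain rule never appear in the final formula: they are exactly the Lagrangian stationarity terms that vanish at optimality.

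For the multiplier claims, I would first invoke \Cref{assumption: sc} together with the bound $\delta\leq \mu_g/(2l_{f,1})$ (which, combined with \Cref{assumption: smooth}, makes $\delta f+g$ at least $(\mu_g/2)$-strongly convex in $y$) to guarantee that $y^*(x)$ and $y_\delta^*(x)$ each exist and are unique. Under LICQ, the KKT conditions at these points admit Lagrangian multipliers $\lambda^*(x),\lambda_\delta^*(x)\in\R_+^{d_h}$; linear independence of $\{\nabla_y h_i(x,\cdot)\}_{i\in\Ic_{y^*(x)}}$ (resp.\ $\Ic_{y_\delta^*(x)}$) pins them down uniquely through the stationarity equation $\nabla_y g+\nabla_y h\,\lambda=0$ (resp.\ $\delta\nabla_y f+\nabla_y g+\nabla_y h\,\lambda=0$), and complementary slackness forces the inactive components to vanish. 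Strong duality for convex programs with a qualified constraint then promotes $(y^*(x),\lambda^*(x))$ and $(y_\delta^*(x),\lambda_\delta^*(x))$ to the saddle-point characterizations \eqref{eq: lambda}--\eqref{eq: lambda delta}.

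For the gradient formula I would apply the sensitivity result for parametric convex programs (Fiacco/Bonnans--Shapiro): strong convexity together with LICQ (which here also entails the strong second-order sufficient condition) means that $(y^*(x),\lambda^*(x))$ and $(y_\delta^*(x),\lambda_\delta^*(x))$ vary continuously differentiably in $x$ via the implicit function theorem applied to the KKT system, and that
\begin{align*}
\nabla V(x)&=\nabla_x g(x,y^*(x))+\nabla_x h(x,y^*(x))\,\lambda^*(x),\\
\nabla V_\delta(x)&=\delta\,\nabla_x f(x,y_\delta^*(x))+\nabla_x g(x,y_\delta^*(x))+\nabla_x h(x,y_\delta^*(x))\,\lambda_\delta^*(x).
\end{align*}
Substituting these into $\nabla\Phi_\delta=\tfrac{1}{\delta}(\nabla V_\delta-\nabla V)$ and grouping the $\nabla_x f$ term out of the bracket produces exactly the expression claimed in the lemma. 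The main obstacle is the rigorous justification of these envelope identities in the coupled-constraint setting: one must rule out abrupt changes of the active set as $x$ varies, which is precisely where LICQ combined with strong convexity does the heavy lifting, allowing the KKT system to be locally inverted around any fixed $x$ and the value functions $V,V_\delta$ to inherit $C^1$ regularity with the stated gradients.
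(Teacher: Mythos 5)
Your overall strategy coincides with the paper's: the paper also writes $\Phi_\delta(x)=\frac{1}{\delta}\bigl[\psi_\delta(x)-\psi(x)\bigr]$ with $\psi(x)=g(x,y^*(x))$ and $\psi_\delta(x)=g_\delta(x,y^*_\delta(x))$ (your $V$ and $V_\delta$), obtains the two envelope gradients $\nabla\psi(x)=\nabla_x g(x,y^*(x))+\nabla_x h(x,y^*(x))\lambda^*(x)$ and $\nabla\psi_\delta(x)=\delta\nabla_x f(x,y^*_\delta(x))+\nabla_x g(x,y^*_\delta(x))+\nabla_x h(x,y^*_\delta(x))\lambda^*_\delta(x)$ by citing Lemmas 2 and 3 of \cite{jiang2024primal}, and gets multiplier uniqueness from LICQ via \cite{wachsmuth2013licq}. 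So your decomposition, your envelope identities, and your final algebra are exactly the paper's route.

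The gap is in the mechanism you give for those envelope identities. You claim that LICQ plus strong convexity allow the implicit function theorem to be applied to the KKT system, making $(y^*(x),\lambda^*(x))$ and $(y^*_\delta(x),\lambda^*_\delta(x))$ continuously differentiable in $x$ and ruling out ``abrupt changes of the active set.'' That is false without strict complementarity, which this lemma deliberately does \emph{not} assume (it enters only later as \Cref{assumption: Strict Complementarity}, and the paper explicitly notes that $\Phi$ itself can fail to be differentiable under LICQ alone). Concretely, take $\min_y y^2$ subject to $y\leq x$: strong convexity and LICQ hold everywhere, yet the active set switches at $x=0$, the multiplier on the active constraint vanishes there, and $y^*(x)=\min(0,x)$ is not differentiable at $x=0$, so the KKT system cannot be locally inverted. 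What rescues the conclusion is that differentiability of the \emph{value function} does not require a differentiable (or even locally stable) solution map: a Danskin-type sensitivity theorem gives the directional derivative $\min_{y\in S(x)}\max_{\lambda\in\Lambda(x,y)}\langle\nabla_x L(x,y,\lambda),d\rangle$, and since strong convexity makes the solution set a singleton while LICQ makes the multiplier set a singleton, this expression is linear in $d$, yielding $\nabla V(x)=\nabla_x L(x,y^*(x),\lambda^*(x))$ — in the example above, $V(x)=\min(0,x)^2$ is indeed $C^1$ even though $y^*$ is not. Replace your implicit-function-theorem step with this argument (or simply with the cited Lemmas 2 and 3 of \cite{jiang2024primal}, which serve precisely this purpose), and your proof is complete.
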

While the gradients of $\Phi_\delta(x)$ exists under LICQ conditions, for general problem \eqref{problem}, $\Phi(x)$ is not guaranteed to be differentiable. For example, \cite{khanduri2023linearly} provides an example where the LICQ condition holds, $\Phi(x)$ is non-differentiable at some points. However, if a given $x$ satisfies the following conditions, then $\nabla \Phi(x)$ exists at $x$.
\begin{assumption}[Strict Complementarity]\label{assumption: Strict Complementarity}
    Let $\lambda^*(x)$  be the Lagrange multipliers for $y^*(x)$ \eqref{eq: lambda}. For any $i\in \Ic_{y^*(x)}$, $[\lambda^*(x)]_i > 0$.
\end{assumption}

\begin{assumption}\label{assumption: hessian smooth}
$\nabla^2 f, \nabla^2 g$  are Lipschitz continuous with $l_{f,2}, l_{g,2}$ respectively, jointly over $\Xc \times \Yc(x)$.
For $i\in [d_h]$, $h_i(x,y)$ is convex with respect to $y$, $h_i, \nabla h_i, \nabla^2 h_i$ are respectively Lipschitz continuous with $l_{h,0}, l_{h,1}, l_{h,2}$ jointly over $\Xc \times \Yc(x)$.
\end{assumption}

Note that \Cref{assumption: Strict Complementarity}, \ref{assumption: hessian smooth} are commonly used in constrained BLO literature \citep{tsaknakis2022implicit,khanduri2023linearly, xu2023efficient, jiang2024barrier, kwon2023penalty} to ensure the existence of $\nabla \Phi(x)$.

\begin{lemma}[Theorem 2 in \cite{xu2023efficient}]\label{lemma: existence of nabla Phi}
    When \Cref{assumption: phi}, \ref{assumption: smooth}, \ref{assumption: sc}, \ref{assumption: hessian smooth} hold, if, for a given $x$, \Cref{assumption: Strict Complementarity} and   LICQ (\Cref{def: licq}) condition hold for $y^*(x)$, then $\nabla \Phi(x)$ exists at $x$.
\end{lemma}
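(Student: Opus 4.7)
The plan is to apply the implicit function theorem to a reduced form of the KKT system for the lower-level problem at the chosen point $x$. Let $I = \Ic_{y^*(x)}$ be the active set. Under LICQ, the argument used in \Cref{lemma: nabla phi delta} already produces the unique multiplier $\lambda^*(x)$ satisfying
\[ \nabla_y g(x,y^*(x)) + \nabla_y h(x,y^*(x))\,\lambda^*(x) = 0, \qquad [\lambda^*(x)]_i h_i(x,y^*(x)) = 0, \qquad \lambda^*(x)\ge 0. \]
By \Cref{assumption: Strict Complementarity} we have $[\lambda^*(x)]_i > 0$ for $i\in I$, and by definition of $I$ we have $h_j(x,y^*(x)) < 0$ for $j\notin I$. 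Both strict inequalities will persist in a neighborhood once we produce a continuous selection, so locally the inactive constraints can be discarded and the active ones treated as equalities $h_i=0$, $i\in I$.

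Next I would set up $F:\R^{d_x}\times \R^{d_y}\times \R^{|I|}\to \R^{d_y+|I|}$ by
\[ F(x',y,\mu) = \begin{pmatrix} \nabla_y g(x',y) + \nabla_y h_I(x',y)\,\mu \\ h_I(x',y) \end{pmatrix}, \]
where $h_I$ collects the active constraint functions and $\nabla_y h_I$ is the $d_y\times |I|$ matrix of their $y$-gradients. We have $F(x,y^*(x),\lambda^*_I(x))=0$, and the Jacobian in $(y,\mu)$ at this base point is the saddle-point matrix
\[ J = \begin{pmatrix} H & \nabla_y h_I(x,y^*(x)) \\ \nabla_y h_I(x,y^*(x))^\top & 0 \end{pmatrix},\qquad H = \nabla^2_{yy} g(x,y^*(x)) + \sum_{i\in I}[\lambda^*(x)]_i\,\nabla^2_{yy} h_i(x,y^*(x)). \]
\Cref{assumption: sc} together with the convexity of each $h_i$ in $y$ (\Cref{assumption: hessian smooth}) and $[\lambda^*(x)]_i\ge 0$ gives $H\succeq \mu_g I$, and LICQ (\Cref{def: licq}) gives that $\nabla_y h_I(x,y^*(x))$ has full column rank. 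A standard Schur-complement computation (the reduced block $-\nabla_y h_I^\top H^{-1} \nabla_y h_I$ is negative definite) then shows $J$ is invertible.

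The implicit function theorem thus produces $C^1$ maps $\tilde y(\cdot),\tilde\mu(\cdot)$ near $x$ with $F(x',\tilde y(x'),\tilde\mu(x'))\equiv 0$, agreeing with $(y^*(x),\lambda^*_I(x))$ at $x'=x$. Since $\tilde\mu>0$ and $h_j(x',\tilde y(x'))<0$ for $j\notin I$ hold in a neighborhood of $x$ (by continuity and the strict inequalities above), padding $\tilde\mu$ with zeros on inactive indices produces a full KKT pair for the lower-level problem at $x'$; strong convexity of $g(x',\cdot)$ on the convex set $\Yc(x')$ identifies this pair uniquely, so $y^*(x')=\tilde y(x')$ on a neighborhood of $x$. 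Hence $y^*$ is continuously differentiable at $x$ and the chain rule gives $\nabla\Phi(x)=\nabla_x f(x,y^*(x))+(\nabla y^*(x))^\top \nabla_y f(x,y^*(x))$. I expect the main obstacle to be the bookkeeping in the last step: invertibility of $J$ is routine saddle-point linear algebra, but carefully arguing that the local implicit-function-theorem solution coincides with the genuine lower-level minimizer $y^*(x')$ near $x$ is where strict complementarity together with LICQ is really needed, since without it the active set could change and the reduced system could fail to describe $y^*$.
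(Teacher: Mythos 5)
Your proof is correct, and it is more self-contained than the paper's. The paper's own proof of this lemma has two parts: for the existence claim itself it simply invokes Theorem 2 of \cite{xu2023efficient}, and the bulk of its argument is devoted to deriving the explicit expression \eqref{eq: nabla y} for $\nabla y^*(x)$ by formally differentiating the KKT system --- a computation that presupposes the differentiability it has just cited. You instead prove existence from scratch: you apply the implicit function theorem to the reduced (active-constraint) KKT map, verify invertibility of the same saddle-point matrix $H$ of \eqref{eq: H} (the paper obtains this from Lemma A.2 of \cite{kornowski2024first}; your Schur-complement argument, using $H\succeq \mu_g I$ from strong convexity of $g$, convexity of the $h_i$, nonnegativity of the multipliers, and full column rank from LICQ, is equivalent), and --- crucially --- supply the active-set stability step: strict complementarity and continuity guarantee that the IFT solution, padded with zeros on the inactive indices, remains a KKT pair of the lower-level problem for all $x'$ near $x$, and convexity of the constraints plus strong convexity of $g(x',\cdot)$ identify its $y$-component with the genuine minimizer $y^*(x')$, so $y^*$ is $C^1$ near $x$ and the chain rule concludes. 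This last step is exactly the content the paper delegates to the citation, and your remark that it is where LICQ and strict complementarity are truly needed is accurate. The trade-off: the paper's route is shorter and its differentiation of the KKT system produces the explicit formula \eqref{eq: nabla y} that is reused later in the proof of \Cref{thm: closeness of nabla Phi}, while your route establishes the lemma without leaning on the external theorem; the same formula would follow from your construction by solving the linear system defined by $J$.
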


Moreover, with additional assumptions, we can establish a non-asymptotic bound for $\|\nabla \Phi(x)-\nabla \Phi_\delta(x)\|$.

\begin{assumption}\label{assumption: delta}
    For any $t\in [0, \delta]$, 
    \begin{enumerate}[noitemsep,topsep=0pt]
  \item[(1)] $y^*_t(x)$ satisfies the LICQ  condition (\Cref{def: licq})  with the same active set as $y^*(x)$. Denote this active set as $\Ic$. Let $\lambda^*_t(x)$  be the Lagrange multiplier for $y^*_t(x)$ in \eqref{eq: lambda delta}. For any $i\in \Ic$, $[\lambda^*_t(x)]_i > 0$ (Strict Complementarity). We assume $\|\lambda^*_t(x)\|\leq \Lambda$, where $\Lambda$ is an $O(1)$ constant.
    \item[(2)] Denote $\nabla_y \Bar{h}(x,y^*_t(x))=\nabla_y [h(x,y^*_t(x))]_{\Ic}$. The singular values of $\nabla_y \Bar{h}(x,y^*_t(x))$ satisfy $\sigma_{\max}([\nabla_y \Bar{h}(x,y^*_t(x)))\leq s_{\max}$, $\sigma_{\min}(\nabla_y \Bar{h}(x,y^*_t(x)))\geq s_{\min}>0$, where $s_{\max}, s_{\min}$ are $O(1)$ constants. 
\end{enumerate}
\end{assumption}
\Cref{assumption: delta} is made for $t\in [0, \delta]$.  When $\delta$ is sufficiently small, i.e., $\delta=O(\epsilon)$, $y^*(x)$ and $y^*_t(x)$ are very close according to \Cref{thm: reformulation}. Thus, we expect that for $t\in [0, \delta]$, $y^*_t(x)$ will have similar properties as $y^*(x)$. Similar assumptions have also been used in \cite{kwon2023penalty} to establish the non-asymptotic bound for $\|\nabla \Phi(x)-\nabla \Phi_\delta(x)\|$.

\begin{theorem}\label{thm: closeness of nabla Phi}
 When \Cref{assumption: phi}, \ref{assumption: smooth}, \ref{assumption: sc}, \ref{assumption: hessian smooth} hold and $\delta \leq \mu_g/(2l_{f,1})$, if \Cref{assumption: delta} holds for a given $x$, we have
    \begin{align*}
        \|\nabla \Phi(x)-\nabla \Phi_\delta(x)\|\leq O(\delta).
    \end{align*}
\end{theorem}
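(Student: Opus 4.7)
The plan is to view $\nabla\Phi_\delta(x)$ from \Cref{lemma: nabla phi delta} as a finite-difference quotient in $\delta$ and apply a first-order Taylor expansion. Define
\[
F(t) = \nabla_x g(x, y^*_t(x)) + \nabla_x h(x, y^*_t(x))\,\lambda^*_t(x),
\]
so that $\nabla\Phi_\delta(x) = \nabla_x f(x, y^*_\delta(x)) + \tfrac{1}{\delta}[F(\delta)-F(0)]$, while the standard implicit-gradient formula gives $\nabla\Phi(x) = \nabla_x f(x, y^*(x)) + (\nabla y^*(x))^\top \nabla_y f(x, y^*(x))$. \Cref{thm: reformulation} with \Cref{assumption: smooth} already forces $\|\nabla_x f(x, y^*_\delta)-\nabla_x f(x, y^*)\| \le l_{f,1}\|y^*_\delta-y^*\| = O(\delta)$, so it suffices to show the finite-difference term is $O(\delta)$-close to $(\nabla y^*(x))^\top \nabla_y f(x, y^*(x))$.

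Because \Cref{assumption: delta} guarantees a fixed active set $\Ic$ with strictly positive, uniformly bounded multipliers along $t\in[0,\delta]$, I would apply the implicit function theorem to the reduced KKT system $t\nabla_y f+\nabla_y g+\nabla_y \bar{h}\,\bar{\lambda}^*_t=0$, $\bar{h}(x,y^*_t)=0$ (bars denoting active components), obtaining $C^{1,1}$ dependence of $(y^*_t,\bar{\lambda}^*_t)$ on $t$. The associated saddle-point matrix
\[
K_t=\begin{pmatrix} M_t & \nabla_y \bar{h} \\ (\nabla_y \bar{h})^\top & 0 \end{pmatrix},\qquad M_t=\nabla^2_{yy}(tf+g)+\sum_{i\in\Ic}(\bar{\lambda}^*_t)_i\nabla^2_{yy}\bar{h}_i,
\]
is uniformly invertible on $[0,\delta]$: $M_t\succeq (\mu_g/2) I$ from \Cref{assumption: sc}, the bound $\delta\le\mu_g/(2l_{f,1})$, convexity of each $h_i$ in $y$, and $\bar{\lambda}^*_t\ge 0$, while the $s_{\min}$ lower bound in \Cref{assumption: delta} controls $\nabla_y\bar{h}$. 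Differentiating the reduced KKT in $t$ yields $K_t(\dot{y}^*_t,\dot{\bar{\lambda}}^*_t)^\top=(-\nabla_y f,0)^\top$; implicit differentiation of the unperturbed KKT in $x$ at $y^*(x)$ yields $K_0(J_x y^*,J_x\bar{\lambda}^*)^\top=(-Q,-\nabla_x\bar{h}^\top)^\top$ with $Q=\nabla^2_{yx}g+\sum_i\bar{\lambda}^*_i\nabla^2_{yx}\bar{h}_i$.

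Differentiating $F$ in $t$ gives $F'(t)=Q_t^\top\dot{y}^*_t+\nabla_x\bar{h}\,\dot{\bar{\lambda}}^*_t$. Writing $K_0^{-1}$ as a symmetric block matrix and substituting both linear systems above, a direct block-matrix manipulation produces $F'(0)=(\nabla y^*(x))^\top\nabla_y f(x,y^*(x))$. Combined with the Taylor remainder $\|F(\delta)-F(0)-\delta F'(0)\|\le L\delta^2/2$, where $L$ is the Lipschitz constant of $F'$, and the $\nabla_x f$ bound above, this completes the proof. The main obstacle is establishing uniform $C^{1,1}$ control on $F'$: it requires uniform invertibility of $K_t$ on $[0,\delta]$ (through the singular-value condition on $\nabla_y\bar{h}$ together with strong convexity plus nonnegative multipliers for $M_t$) and uniform boundedness $\|\bar{\lambda}^*_t\|\le\Lambda$ from \Cref{assumption: delta}. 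Once these are secured, $\dot{y}^*_t$ and $\dot{\bar{\lambda}}^*_t$ are Lipschitz in $t$ with $O(1)$ constants, and the Taylor remainder estimate becomes routine.
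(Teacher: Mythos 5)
Your proposal follows essentially the same route as the paper's proof: you form the same finite-difference quotient over the KKT-based map $F(t)=\nabla_x g(x,y^*_t(x))+\nabla_x h(x,y^*_t(x))\lambda^*_t(x)$, differentiate the reduced KKT system in $t$ and in $x$ to get the same pair of saddle-point linear systems, use the same block-matrix identity to show $F'(0)=(\nabla y^*(x))^\top\nabla_y f(x,y^*(x))$, and rely on uniform invertibility of the saddle-point matrix under \Cref{assumption: delta} exactly as in \Cref{lemma: CH}. Your Taylor-remainder estimate with Lipschitz $F'$ is precisely the paper's integral bound, with the required $C^{1,1}$ control established in \Cref{lemma: smooth of y*} (via a resolvent identity rather than an appeal to the implicit function theorem), so the argument is correct and matches the paper's approach.
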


Similar non-asymptotic bound for $\|\nabla \Phi(x)-\nabla \Phi_\delta(x)\|$ has been established in \cite{kwon2023penalty}; however, their bound is established only for the LL constraints $\Yc$ that do not depend on $x$. Our \Cref{thm: closeness of nabla Phi} provides a more general theoretical justification for the validity of the reformulation \eqref{eq: reformulation} for coupled constraints $\Yc(x)=\{y\in \R^{d_y} | h(x,y)\leq 0\}$.

\section{The SFLCB Algorithm}

In the last section, we have justified the validity of our reformulation for coupled constrained BLO. In this section, we focus on a special and important case where the LL constraints are $h(x,y)=Bx+Ay-b$. This particular category of constrained BLO problems encompasses a broad range of applications, including distributed optimization \citep{yang2022decentralized, khanduri2023linearly},  adversarial training \citep{zhang2022revisiting, khanduri2023linearly},  and hyperparameter optimization for constrained learning tasks such as hyperparameter optimization in SVM (see \Cref{section: exp}). For this special case $h(x,y)=Bx+Ay-b$, we introduce a novel single-loop, first-order algorithm SFLCB, which achieves an improvement in the convergence rate compared to prior works \citep{kwon2023penalty, jiang2024primal}.

First, we introduce the following slackness parameters $\alpha, \beta\in \R^{d_h}_-$ and define $y'=(y^\top,\alpha^\top)^\top$,  $z'=(z^\top,\beta^\top)^\top$. With these slackness parameters, we can convert the original inequality constraints to equality constraints, i.e. we can reformulate $\min_{x\in \Xc, y\in \Yc(x)}\max_{z\in \Yc(x)}\phi_\delta(x,y,z)$ as:
\begin{align}\label{problem eq}
    \min_{x\in \Xc, y'\in \Sc_y(x)}\max_{z'\in \Sc_y(x)}\phi_\delta(x,y,z) 
\end{align}
where $\Pc_y=\{y\in\R^{d_y}, \alpha \in\R^{d_h}_-\}$, $\Sc_y(x)=\{y,\alpha \in \Pc_y | h(x,y)-\alpha=0\}$.
The Lagrangian of \eqref{problem eq} with multiplier $u,v\in \R^{d_h}$ is 
\begin{align}\nonumber
L_\delta(x,y',z',u,v)=&\phi_\delta(x,y,z)+u^\top(h(x,y)-\alpha)-v^\top(h(x,z)-\beta).
\end{align}

According to Proposition 5.3.4 in \cite{bertsekas2009convex}, we know that
\begin{align*}
    \phi_\delta(x)=\min_{y'\in \Pc_y, v\in\R^{d_h}}\max_{z'\in \Pc_y, u\in\R^{d_h}}L_\delta(x,y',z',u,v).
\end{align*}
Note that when $\delta\leq \mu_g/(2l_{f,1})$, $\phi_\delta$ is $\mu_g/2$-strongly convex with respect $y$. However, $L_\delta(x,y',z',u,v)$ is only convex with respect $y'$ and concave with respect $z'$.
To make the objective function strongly convex with respect to $y'$ and strongly concave with respect to $z'$, we can construct an augmented Lagrangian $K$: 
\begin{align}\nonumber
    K(x,y',z',u,v)=&L_\delta(x,y',z',u,v)+\frac{\rho_1}{2}\|h(x,y)-\alpha\|^2-\frac{\rho_2}{2}\|h(x,z)-\beta\|^2.
\end{align}
With $0\leq \rho_1\leq \frac{\mu_g-\delta l_{f,1}}{\sigma_{\max}^2(A)}$ and $0\leq \rho_2\leq \frac{\mu_g}{\sigma_{\max}^2(A)}$, according to \Cref{lemma: sc of y z}, $K$ is strongly convex with respect to $y'$ and strongly concave with respect to $z'$.
Moreover, we have
\begin{align*}
    &\min_{y'\in \Pc_y, v\in\R^{d_h}}\max_{z'\in \Pc_y, u\in\R^{d_h}}L_\delta(x,y',z',u,v)=\min_{y'\in \Pc_y, v\in\R^{d_h}}\max_{z'\in \Pc_y, u\in\R^{d_h}} K(x,y',z',u,v).
\end{align*}
Note that $L_\delta$ and $K$ have the same optimal points and same optimal function value. Thus, we can reformulate the problem \eqref{problem eq} to the minimax optimization problem over $K$:
\begin{align}\label{problem minimax}
       \min_{x\in \Xc, y'\in \Pc_y, v\in\R^{d_h}}\max_{z'\in \Pc_y, u\in\R^{d_h}} K(x,y',z',u,v).
\end{align}
Motivated by these theoretical analyses, and applying gradient descent ascent (GDA) over problem \eqref{problem minimax}, we propose SFLCB. A compact description can be found in \Cref{alg1}.

\begin{algorithm}
    \caption{SFLCB}
    \label{alg1}
    \begin{algorithmic}
    \State Input: $\delta$, $\rho_1$, $\rho_2$,  $\eta_x,\eta_y,\eta_z, \eta_v, \eta_u, T$
    \State Initialize: $x_0\in \Xc, y'_0,z'_0\in\Pc_y ,u_0,v_0\in \R^{d_h}$
    \For{$t=0,1,...,T-1$}
        \State $u_{t+1}=u_t+\eta_u(h(x_t,y_t)-\alpha_t)$
        \State $v_{t+1}=v_t+\eta_v(h(x_t,z_t)-\beta_t)$
        \State $x_{t+1}=x_t-\eta_x\nabla_x K(x_t,y'_t,z'_t,u_{t+1}, v_{t+1})$
        \State $y'_{t+1}=\Pi_{\Pc_y}\{y'_t-\eta_y\nabla_y' K(x_t,y'_t,z'_t,u_{t+1}, v_{t+1})\}$
        \State $z'_{t+1}=\Pi_{\Pc_y}\{z'_t+\eta_z\nabla_z' K(x_t,y'_t,z'_t,u_{t+1}, v_{t+1})\}$
  \EndFor
  \end{algorithmic}
\end{algorithm}

\subsection{Convergence results}
In this section, we provide the non-asymptotic convergence results of SFLCB (\Cref{alg1}) for two constraint settings: 1) $h(x,y)=Bx+Ay-b$, where $A$ is full row rank, and 2) $h(y)=Ay-b$, where $A$ is not required to be full row rank.

Note that when the LICQ condition (\Cref{def: licq}) holds for $y^*(x)$ and $y^*_\delta(x)$, the optimal Lagrangian multipliers of $y^*(x)$ and $y^*_\delta(x)$ are unique. Thus, we first introduce the following lemma and notations for these optimal Lagrangian multipliers.

\begin{lemma}\label{lemma: multiplier}
When the LICQ  condition (\Cref{def: licq}) holds for $y^*(x)$ and $y^*_\delta(x)$, the optimal Lagrangian multipliers of $y^*(x)$ and $y^*_\delta(x)$ are unique, and we have
    \begin{align*}
        u^*_\delta(x)&=\argmax_{u\in \R_+}\min_{y \in \Yc(x)}g_{\delta}(x,y)+u^\top h(x,y)=\argmax_{u\in \R^{d_h}}\min_{y' \in \Pc_y} K(x,y',z',u,v),\\
        v^*(x)&=\argmax_{v\in \R_+}\min_{z \in \Yc(x)}g(x,z)+v^\top h(x,z)=\argmin_{v\in \R^{d_h}}\max_{z' \in \Pc_y} K(x,y',z',u,v).
    \end{align*}
\end{lemma}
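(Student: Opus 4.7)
The plan is to establish the lemma in two stages. First, I would prove uniqueness of the Lagrange multipliers under LICQ via the standard KKT argument. Second, I would show the two characterizations of $u^*_\delta(x)$ (and the analogous statement for $v^*(x)$) agree by recognizing $\min_{y' \in \Pc_y} K$ and $\max_{z' \in \Pc_y} K$ as augmented Lagrangian duals of the slackness reformulation of the inequality-constrained LL problem, and then invoking the standard saddle-point property of augmented Lagrangians for convex problems.

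For uniqueness, the KKT stationarity at $y^*_\delta(x)$ reads $\nabla_y g_\delta(x,y^*_\delta(x)) + \nabla_y h(x,y^*_\delta(x))\lambda = 0$ with $\lambda \geq 0$ and $\lambda^\top h(x,y^*_\delta(x)) = 0$. Complementary slackness forces $\lambda_i = 0$ for $i \notin \Ic_{y^*_\delta(x)}$; LICQ, i.e.\ linear independence of $\{\nabla_y h_i(x,y^*_\delta(x))\}_{i \in \Ic_{y^*_\delta(x)}}$, makes the remaining linear system on the active set full column rank, so $\{\lambda_i\}_{i \in \Ic_{y^*_\delta(x)}}$ is uniquely determined. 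Hence $\lambda^*_\delta(x)$ is unique, and the same argument applied at $y^*(x)$ yields uniqueness of $\lambda^*(x) = v^*(x)$.

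For the equivalent characterization of $u^*_\delta(x)$, I would isolate the $y'$-dependent terms of $K$ and write
\[
\min_{y' \in \Pc_y} K(x,y',z',u,v) = \min_{y \in \R^{d_y},\, \alpha \leq 0}\bigl[\delta f(x,y) + g(x,y) + u^\top(h(x,y) - \alpha) + \tfrac{\rho_1}{2}\|h(x,y) - \alpha\|^2\bigr] + C(z',v),
\]
where $C(z',v)$ does not depend on $u$. The bracketed expression is the augmented Lagrangian (multiplier $u$, penalty $\rho_1$) of the convex problem $\min_{y,\,\alpha \leq 0}\, \delta f(x,y) + g(x,y)$ subject to $h(x,y) = \alpha$, whose feasible set is in bijection with $\{y \in \Yc(x)\}$ via $\alpha = h(x,y)$, so the problem is equivalent to $\min_{y \in \Yc(x)}\, \delta f(x,y) + g(x,y)$. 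Writing KKT for the slack reformulation, the auxiliary multiplier $w \geq 0$ of the $\alpha \leq 0$ constraint must satisfy $w = u$, which forces $u \geq 0$ and makes the $y$-stationarity coincide with the KKT of the original inequality problem; hence the optimal equality-constraint multiplier equals $u^*_\delta(x)$. Because the problem is convex and the KKT pair is a saddle point of the augmented Lagrangian (the quadratic penalty and its gradient vanish at feasibility), $u^*_\delta(x)$ uniquely maximizes the augmented dual over $u \in \R^{d_h}$ for every $\rho_1 \geq 0$, which gives the desired second equality.

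The argument for $v^*(x)$ is entirely parallel: isolating the $z'$-dependent terms of $K$ (which enter with a flipped sign) gives $\max_{z' \in \Pc_y} K = -\min_{z \in \R^{d_y},\, \beta \leq 0}[g(x,z) + v^\top(h(x,z) - \beta) + \tfrac{\rho_2}{2}\|h(x,z) - \beta\|^2] + C'(x,y',u)$, so minimizing this over $v$ is equivalent to maximizing the augmented dual of $\min_{z \in \Yc(x)} g(x,z)$ over $v$, which by the same reasoning equals $v^*(x)$. The main obstacle I anticipate is the sign bookkeeping around the added slackness constraints $\alpha \leq 0$ and $\beta \leq 0$: the auxiliary KKT multipliers of those constraints must be carefully identified with $u$ and $v$ to recover the nonnegativity that reconciles the unrestricted $u, v \in \R^{d_h}$ in the second characterization with the sign constraints $u, v \in \R_+$ in the first.
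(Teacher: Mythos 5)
Your proposal is correct and takes essentially the same route as the paper's own proof: both match the KKT system of the slack-variable augmented Lagrangian saddle-point problem against the KKT system of the original inequality-constrained lower-level problem (with the multiplier of the constraint $\alpha \le 0$ identified with $u$, recovering nonnegativity, and the quadratic penalty vanishing at feasibility), and both invoke LICQ-induced uniqueness of the multiplier to conclude the two characterizations coincide. Your write-up is merely more explicit than the paper's terse assertion that ``these two KKT conditions are equivalent,'' in particular about why a KKT multiplier of the reformulation is the maximizer of the augmented dual.
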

The proof of \Cref{lemma: multiplier} can be found in \Cref{app: Bx+Ay}. 

Next, we introduce the following notations:
\begin{align*}
    &y'^*_\delta(x,u)=\argmin_{y'\in\Pc_y} K(x,y',z',u, v), \quad z'^*(x,v)=\argmax_{z'\in \Pc_y} K(x,y',z',u, v), \quad A'=(A, -I).
\end{align*}
Then, we present the convergence
results of SFLCB (\Cref{alg1}) for coupled constraints $h(x,y)=Bx+Ay-b$, where $A$ is full row rank. Note that BLOCC in \cite{jiang2024primal} that achieves the complexity of $O(\epsilon^{-3}\log(\epsilon^{-1}))$ also needs the matrix $A$ to be full row rank (See \Cref{tab:comparison}). For $A$ that is not full row rank and $B=0$, we provide the convergence results in \Cref{thm: Ay} and \Cref{coro: Ay}.

\begin{theorem}\label{thm: Bx+Ay}
When $h(x,y)=Bx+Ay-b$, $A$ is full row rank, \Cref{assumption: phi}, \ref{assumption: smooth}, \ref{assumption: sc} hold and $\delta=\Theta(\epsilon)\leq \mu_g/(2l_{f,1})$, if we apply \Cref{alg1} with appropriate parameters (see \Cref{app: Bx+Ay}), then we can find an $\epsilon$-stationary point of $\Phi_\delta$
with a complexity of $O(\epsilon^{-4})$.

Moreover, if we have initial points $x_0, y_0, z_0, u_0, v_0$ such that
\begin{align}\label{eq: initial point Bx+Ay}
        &\|y_0-y^*_\delta(x_0)\|\leq O(\delta), \, \|u_0-u^*_\delta(x_0)\|\leq O(\delta),\\
        &\|z_0-z^*(x_0)\|\leq O(\delta), \, \|v_0-v^*(x_0)\|\leq O(\delta),\label{eq: initial point Bx+Ay end}
\end{align}
then we can find an $\epsilon$-stationary point of $\Phi_\delta$
with a complexity of $O(\epsilon^{-3})$.
\end{theorem}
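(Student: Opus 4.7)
The plan is to analyze \Cref{alg1} as a single-loop primal-dual gradient method on the augmented-Lagrangian saddle problem \eqref{problem minimax}, driven by a Lyapunov-style potential that couples $\phi_\delta(x_t) = \delta\Phi_\delta(x_t)$ with squared tracking errors for the four inner variables. First, I invoke \Cref{lemma: sc of y z} to confirm that, with the stipulated $\rho_1, \rho_2$, $K(x,\cdot,z',u,v)$ is $\mu_y$-strongly convex in $y'$ and $K(x,y',\cdot,u,v)$ is $\mu_z$-strongly concave in $z'$ with $\mu_y, \mu_z = \Theta(1)$. The full-row-rank assumption on $A$ makes the dual subproblems defining $u^*_\delta(x)$ and $v^*(x)$ in \Cref{lemma: multiplier} strongly concave with condition number of order one, so these dual optima and the inner primal maps $y'^*_\delta(x,u)$, $z'^*(x,v)$ are all $\Theta(1)$-Lipschitz in their arguments.

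Second, I introduce tracking errors
\begin{align*}
e_y^t &= \|y'_t - y'^*_\delta(x_t, u_{t+1})\|^2, \quad e_z^t = \|z'_t - z'^*(x_t, v_{t+1})\|^2, \\
e_u^t &= \|u_t - u^*_\delta(x_t)\|^2, \quad e_v^t = \|v_t - v^*(x_t)\|^2,
\end{align*}
and derive standard GDA-style one-step contractions
\begin{align*}
e_\bullet^{t+1} \leq (1 - c_\bullet\eta_\bullet)\,e_\bullet^t + C\eta_x^2 \|\nabla_x K_t\|^2 + (\text{cross terms}),
\end{align*}
combining strong convexity/concavity of the corresponding inner problem with Lipschitz bounds on the optimal maps under movement of $(x, u, v)$. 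For $e_u, e_v$ this relies on the augmented-Lagrangian dual ascent/descent contracting linearly towards the unique dual optimum, again thanks to full row rank of $A$. The projection onto $\Pc_y$ (the negative orthant on the slackness coordinates) is non-expansive and preserves all contractions.

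Third, I apply the $L_\phi$-smoothness descent lemma to $\phi_\delta$ along the $x$-update. By the envelope identity $\nabla\phi_\delta(x_t) = \nabla_x K$ evaluated at the saddle point, together with Lipschitzness of $\nabla_x K$ in $(y', z', u, v)$, the bias $\|\nabla_x K_t - \nabla\phi_\delta(x_t)\|^2$ is linearly controlled by $e_y^t + e_z^t + e_u^t + e_v^t$, yielding
\begin{align*}
\phi_\delta(x_{t+1}) - \phi_\delta(x_t) \leq -\tfrac{\eta_x}{4}\|\nabla\phi_\delta(x_t)\|^2 + C\eta_x\bigl(e_y^t + e_z^t + e_u^t + e_v^t\bigr).
\end{align*}
Forming $V_t = \phi_\delta(x_t) - \phi_\delta^\star + \lambda_y e_y^t + \lambda_z e_z^t + \lambda_u e_u^t + \lambda_v e_v^t$ with $\Theta(1)$-multipliers $\lambda_\bullet$ large enough that each contraction $-\lambda_\bullet c_\bullet\eta_\bullet e_\bullet^t$ dominates the primal-side $C\eta_x e_\bullet^t$ and the $\|\nabla_x K_t\|^2$-drift is redistributed by Young's inequality, I obtain $V_{t+1} - V_t \leq -c\eta_x \|\nabla\phi_\delta(x_t)\|^2$. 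Telescoping and using $\nabla\Phi_\delta = \nabla\phi_\delta / \delta$ gives
\begin{align*}
\min_{0 \leq t < T}\|\nabla\Phi_\delta(x_t)\|^2 \leq \frac{V_0}{c\,T\eta_x\delta^2}.
\end{align*}

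Finally, the two complexity regimes come entirely from the size of $V_0$. Since $\phi_\delta = \delta\Phi_\delta$, one has $\phi_\delta(x_0) - \phi_\delta^\star = O(\delta)$. Under the warm-start conditions \eqref{eq: initial point Bx+Ay}--\eqref{eq: initial point Bx+Ay end} the initial tracking errors are $O(\delta^2)$, so $V_0 = O(\delta) = O(\epsilon)$ and the bound above yields $T = O(\epsilon^{-3})$. Without the warm start $V_0$ is dominated by the $\Theta(1)$ initial tracking errors, $V_0 = \Theta(1)$, and the same bound gives $T = O(\epsilon^{-4})$. The main technical obstacle is the simultaneous balancing of the five step sizes and four multipliers $\lambda_\bullet$ so that all contraction and descent inequalities combine with a strictly positive coefficient on $\|\nabla\phi_\delta(x_t)\|^2$; this balance is feasible precisely because the augmentation terms $\rho_1\|h(x,y)-\alpha\|^2$ and $\rho_2\|h(x,z)-\beta\|^2$ together with the full-row-rank condition on $A$ keep every contraction rate $c_\bullet$ of order one and independent of $\delta$.
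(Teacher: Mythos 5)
Your high-level accounting is right (the two regimes $O(\epsilon^{-4})$ vs.\ $O(\epsilon^{-3})$ really do come from whether $V_0$ is $\Theta(1)$ or $O(\delta)$, and the telescoping through $\|\nabla\Phi_\delta\|^2=\|\nabla\phi_\delta\|^2/\delta^2$ matches the paper), and your potential is a legitimate alternative shape: value gap of $\phi_\delta$ plus weighted squared tracking errors, rather than the paper's combination $V_t=\tfrac14 K_t+2q(x_t,v_t)-d(x_t,z_t',u_t,v_t)$ of augmented-Lagrangian \emph{values}. But there is a genuine gap at the step that is the actual crux of the theorem: the claimed one-step linear contraction of the dual errors $e_u^t=\|u_t-u^*_\delta(x_t)\|^2$ and $e_v^t$. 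You justify it by asserting that ``the augmented-Lagrangian dual ascent/descent contract[s] linearly towards the unique dual optimum, thanks to full row rank of $A$.'' This does not follow from anything you have set up. The dual function relevant to \Cref{alg1} is $u\mapsto\min_{y'\in\Pc_y}K(x,y',z',u,v)$, where the slack constraint $\alpha\le 0$ is \emph{not} dualized but handled by projection; this function is concave and smooth, but it is only piecewise strongly concave (the pieces correspond to active sets of $\alpha$), and a uniform strong-concavity modulus — or a quadratic-growth/error-bound property — would have to be proven, not cited. Moreover the algorithm's $u$-update is an \emph{inexact} dual step: it uses $h(x_t,y_t)-\alpha_t$ evaluated at the current iterate $y_t'$, not at the inner minimizer $y'^*_\delta(x_t,u_t)$, so even granting dual strong concavity you would need to propagate an inexactness term of order $\|y_t'-y'^*_\delta(x_t,u_t)\|$ through the contraction and show the resulting cross terms can be absorbed. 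As written, the central inequality of your Lyapunov argument is assumed rather than established.

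It is worth seeing how the paper avoids this entirely, because it shows the full-row-rank hypothesis is used in a different (and weaker) way than your proposal requires. In \Cref{lemma: uv error bounds}, the dual error is recovered \emph{algebraically} from the primal optimality conditions: subtracting $\nabla_y g_\delta(x_t,y^*_\delta(x_t))+A^\top u^*_\delta(x_t)=0$ from the $y$-update identity and multiplying by $(AA^\top)^{-1}A$ gives
\begin{align*}
\|u_{t+1}-u^*_\delta(x_t)\|\le \sigma_{uy}\|y'_{t+1}-y'_t\|+\sigma_{u1}\|Bx_t+A'y'_t-b\|+\sigma_{u2}\|Bx_t+A'y'^*_\delta(x_t,u_{t+1})-b\|,
\end{align*}
i.e.\ the dual error is dominated by quantities (iterate movement and constraint violations) that the potential descent in \Cref{lemma: Potential function} already forces to vanish — no dual geometry, no contraction, no $e_u^t$ recursion is needed. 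The same mechanism (plus \Cref{lemma: sigma yb zb}) is what gives Lipschitzness of $u^*_\delta(\cdot)$, $v^*(\cdot)$ in $x$, which you also need — both for the drift terms in your $e_u$ recursion and for the smoothness of $\phi_\delta$ that your descent lemma on $\phi_\delta$ presupposes — and which you again attribute to the unproven dual strong concavity. To repair your proof you would either have to prove a uniform error bound for the projected augmented dual (nontrivial, active-set dependent), or replace your $e_u,e_v$ contraction step by the paper's primal-recovery bound, at which point the tracking-error potential essentially reorganizes itself into the paper's argument.
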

The formal statement and the complete proof of \Cref{thm: Bx+Ay} can be found in \Cref{app: Bx+Ay}. 

\textbf{Proof sketch for \Cref{thm: Bx+Ay}}.
The key new idea in our proof is the construction of a novel potential function $V_t$ and prove the descent lemma of $V_t$ (\Cref{lemma: Potential function}). $V_t$ is defined as:
\begin{align*}
    V_t=\frac{1}{4}K(x_t, y'_t, z'_t, u_t, v_t)+2q(x_t,v_t)-d(x_t,z'_t, u_t, v_t)
\end{align*}
where 
$d(x, z', u, v)=K(x, y'^*_\delta(x,u), z', u,v)$ and $q(x, v)=\phi_\delta(x, y^*_\delta(x), z^*(x, v))-v^\top(A'z'^*(x, v)-b)-\frac{\rho_2}{2}\|A'z'^*(x, v)-b\|^2$. To prove \Cref{lemma: Potential function}, we need to first prove several novel error bounds in \Cref{lemma: error bounds}, \Cref{lemma: sigma yb zb} and \Cref{lemma: uv error bounds}. Those error bounds may be of independent interest for solving other similar problems. The full row rank property of $A$ is used in \Cref{lemma: uv error bounds} to bound $\|u_{t+1}-u^*_\delta(x_t)\|$ and $\|v_{t+1}-v^*(x_t)\|$.

Since $A$ has full row rank, then according to Theorem 6 in \cite{jiang2024primal}, we can easily find initial points satisfying \eqref{eq: initial point Bx+Ay}-\eqref{eq: initial point Bx+Ay end} with a complexity of $O(\log(\epsilon^{-1}))$ and we have the following corollary.

\begin{corollary}\label{coro: Bx+Ay}
When $h(x,y)=Bx+Ay-b$, $A$ has full row rank, \Cref{assumption: phi}, \ref{assumption: smooth}, \ref{assumption: sc} hold, and $\delta=\Theta(\epsilon)\leq \mu_g/(2l_{f,1})$, if we apply projected gradient descent (PGD) for $\max_{v\in \R^{d_h}_+}\min_{z \in \R^{d_y}}g(x_0,z)+v^\top(Bx_0+Az-b)$ with a fixed $x_0$, we can find $\hat v$, $\hat z$ such that
$\|\hat v-v^*(x_0)\|\leq \delta$ and $\|\hat z-z^*(x_0)\|\leq \delta$
with a complexity of $O(\log(\epsilon^{-1}))$.
Set $y_0=z_0=\hat z$, $u_0=v_0=\hat v$, $\alpha_0=h(x_0,y_0)$, $\beta_0=h(x_0,z_0)$. With $x_0, y'_0, z'_0, u_0, v_0$ as initial points and applying \Cref{alg1}, we can find an $\epsilon$-stationary point of $\Phi_\delta$
with a complexity of $O(\epsilon^{-3})$. Thus, the total complexity is $O(\epsilon^{-3}+\log(\epsilon^{-1}))=O(\epsilon^{-3})$.
\end{corollary}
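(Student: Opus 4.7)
The plan is to verify that the PGD warm-start produces initial points meeting conditions \eqref{eq: initial point Bx+Ay}--\eqref{eq: initial point Bx+Ay end}, after which \Cref{thm: Bx+Ay} immediately yields an $\epsilon$-stationary point in $O(\epsilon^{-3})$ additional SFLCB iterations. First, I would establish linear convergence of the stated dual PGD. Since $g(x_0,\cdot)$ is $\mu_g$-strongly convex, the inner minimizer $z(v):=\argmin_z g(x_0,z)+v^\top Az$ is $(\|A\|/\mu_g)$-Lipschitz in $v$, and the dual objective $q(v)=g(x_0,z(v))+v^\top(Bx_0+Az(v)-b)$ has $(\|A\|^2/\mu_g)$-Lipschitz gradient $\nabla q(v)=Bx_0+Az(v)-b$. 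Full row rank of $A$ further makes $q$ strongly concave, since $-\nabla^2 q=A(\nabla^2_{zz}g)^{-1}A^\top\succeq(\sigma_{\min}^2(A)/l_{g,1})I$, so projected gradient ascent on the non-negative orthant converges linearly, attaining $\|\hat v-v^*(x_0)\|\leq\delta$ after $O(\log(\delta^{-1}))=O(\log(\epsilon^{-1}))$ iterations, and Lipschitz continuity of $z(\cdot)$ then gives $\|\hat z-z^*(x_0)\|=O(\delta)$. This is essentially Theorem 6 in \cite{jiang2024primal}.

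Next, I would verify that $y_0=z_0=\hat z$, $u_0=v_0=\hat v$ satisfies all four initialization bounds. The $z$- and $v$-bounds are immediate from the previous step. For the $y$-bound I would combine the triangle inequality with $z^*(x_0)=y^*(x_0)$ and \Cref{thm: reformulation}, obtaining
\[
\|y_0-y^*_\delta(x_0)\|\leq \|\hat z-z^*(x_0)\|+\|y^*(x_0)-y^*_\delta(x_0)\|\leq \delta+\frac{2\delta l_{f,0}}{\mu_g}=O(\delta).
\]
For the $u$-bound I need the dual-stability estimate $\|u^*_\delta(x_0)-v^*(x_0)\|=O(\delta)$, which I would derive from the KKT stationarity equations $A^\top v^*(x_0)=-\nabla_y g(x_0,y^*(x_0))$ and $A^\top u^*_\delta(x_0)=-\nabla_y g(x_0,y^*_\delta(x_0))-\delta\nabla_y f(x_0,y^*_\delta(x_0))$: subtracting, left-multiplying by the pseudo-inverse $(AA^\top)^{-1}A$ (well-defined under full row rank of $A$), and invoking Lipschitzness of $\nabla g$ together with $\|y^*_\delta-y^*\|=O(\delta)$ from \Cref{thm: reformulation}, yields $\|u^*_\delta(x_0)-v^*(x_0)\|\leq(\sigma_{\max}(A)/\sigma_{\min}^2(A))(l_{g,1}\|y^*_\delta-y^*\|+\delta l_{f,0})=O(\delta)$.

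Finally, applying \Cref{thm: Bx+Ay} with this warm-started initialization yields an $\epsilon$-stationary point of $\Phi_\delta$ in $O(\epsilon^{-3})$ further SFLCB iterations, for a combined complexity of $O(\epsilon^{-3}+\log(\epsilon^{-1}))=O(\epsilon^{-3})$. The main technical subtlety is the dual-stability step: primal closeness is immediate from \Cref{thm: reformulation}, but transferring it to the multipliers relies critically on full row rank of $A$ to stably invert the KKT system via $(AA^\top)^{-1}A$; without it, the multiplier can be non-unique or change discontinuously under an $O(\delta)$ perturbation and the desired bound would fail. A secondary technicality is ensuring that the warm-started $\hat z$ is (approximately) feasible so that $\alpha_0=h(x_0,y_0)$ lies in $\R^{d_h}_-$; this follows from strict feasibility of $z^*(x_0)$ (Slater-type) combined with $\|\hat z-z^*(x_0)\|=O(\delta)$ being sufficiently small, possibly after a cheap truncation that does not affect the asymptotic rate.
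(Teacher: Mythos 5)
Your proposal is correct and takes essentially the same approach as the paper's proof: warm-start with PGD on the dual problem (citing Theorem 6 of \cite{jiang2024primal}), verify the initialization conditions \eqref{eq: initial point Bx+Ay}--\eqref{eq: initial point Bx+Ay end}, and then invoke \Cref{thm: Bx+Ay}, giving total complexity $O(\epsilon^{-3}+\log(\epsilon^{-1}))=O(\epsilon^{-3})$. Your KKT-based dual-stability bound $\|u^*_\delta(x_0)-v^*(x_0)\|=O(\delta)$, obtained by subtracting the stationarity equations and left-multiplying by $(AA^\top)^{-1}A$, makes explicit the one step the paper's proof merely asserts (namely $\|u_0-u^*_\delta(x_0)\|\leq O(\delta)$), and it is the same full-row-rank technique the paper uses in \Cref{lemma: uv error bounds}.
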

The proof of \Cref{coro: Bx+Ay} can be found in \Cref{app: Bx+Ay}. Thus, compared to \cite{jiang2024primal}, we achieve an improvement in the convergence rate from $O(\epsilon^{-3}\log(\epsilon^{-1}))$ to $O(\epsilon^{-3})$ for the coupled linear constraint (See \Cref{tab:comparison}).

Additionally, we have the following convergence
results for constraints $h(y)=Ay-b$, where $A$ is not required to have a full row rank.

\begin{theorem}\label{thm: Ay}
When $h(x,y)=Ay-b$, \Cref{assumption: phi}, \ref{assumption: smooth}, \ref{assumption: sc} hold, and $\delta=\Theta(\epsilon)\leq \mu_g/(2l_{f,1})$, if we apply \Cref{alg1} with appropriate parameters (see \Cref{app: Ay}), then we can find an $\epsilon$-stationary point of $\Phi_\delta$
with a complexity of $O(\epsilon^{-4})$.

Moreover, if we have initial points $x_0, y'_0, z'_0, u_0, v_0$ such that
\begin{align}\label{eq: initial point Ay}
        &\|y_0-y^*_\delta(x_0)\|\leq O(\delta), \, \|A'y'^*_\delta(x_0,u_0)-b\|\leq O(\delta), \, \|A'y_0'-b\|\leq O(\delta)\\
        &\|z_0-z^*(x_0)\|\leq O(\delta),\,\|A'z'^*(x_0,v_0)-b\|\leq O(\delta)\, \|A'z_0'-b\|\leq O(\delta)\label{eq: initial point Ay end}
\end{align}
then we can find an $\epsilon$-stationary point of $\Phi_\delta$
with a complexity of $O(\epsilon^{-3})$.
\end{theorem}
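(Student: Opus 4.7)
The plan is to follow the overall strategy of \Cref{thm: Bx+Ay}, namely to construct a potential function $V_t$ and prove a descent inequality, but with modifications that handle the absence of a full row rank assumption on $A$. In the full row rank case, \Cref{lemma: uv error bounds} bounds $\|u_{t+1}-u^*_\delta(x_t)\|$ and $\|v_{t+1}-v^*(x_t)\|$ via the inverse of the smallest singular value of $A$; without that rank condition, optimal Lagrange multipliers need not be unique, so the dual-error bounds must be replaced by bounds on the primal constraint residuals $\|A'y'_t - b\|$ and $\|A'z'_t - b\|$. Because $h$ does not depend on $x$ in this setting, the residual dynamics are in fact simpler, which compensates for the loss of the full row rank property.

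First I would establish the primal error bounds: since $K$ is $\Theta(1)$-strongly convex in $y'$ and $\Theta(1)$-strongly concave in $z'$ whenever $\delta \le \mu_g/(2l_{f,1})$, the distances $\|y'_t - y'^*_\delta(x_t,u_t)\|$ and $\|z'_t - z'^*(x_t,v_t)\|$ can be bounded using strong convexity alone, without invoking the rank of $A$. Next, the dual updates $u_{t+1}=u_t+\eta_u(A'y'_t-b)$ and $v_{t+1}=v_t+\eta_v(A'z'_t-b)$, combined with the primal descent on $y'$ and $z'$ and the Lipschitz continuity of $\nabla_{y'}K$, can be shown to make the residuals $\|A'y'_t-b\|$ and $\|A'z'_t-b\|$ contract provided $\eta_u,\eta_v,\eta_y,\eta_z$ are chosen appropriately small. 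Under the warm-start assumption \eqref{eq: initial point Ay}--\eqref{eq: initial point Ay end} these residuals therefore remain $O(\delta)$ throughout the run; without warm start they begin at $O(1)$ and decay only at the linear rate dictated by $\eta_u$, which is precisely what adds an extra $\epsilon^{-1}$ factor and yields $O(\epsilon^{-4})$ instead of $O(\epsilon^{-3})$.

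With these two ingredients I would define a potential function $V_t$ analogous to the one used for \Cref{thm: Bx+Ay} but with the dual-error terms replaced by constraint-residual terms, and prove a descent inequality of the form $V_{t+1}-V_t \le -c\eta_x\|\nabla\Phi_\delta(x_t)\|^2 + \text{error terms}$ whose error terms are controlled by the primal and residual bounds above. Telescoping over $T$ iterations and choosing $\eta_x = \Theta(\delta) = \Theta(\epsilon)$ then gives $\min_{t<T}\|\nabla\Phi_\delta(x_t)\|^2 \le O(\epsilon)/T$ under warm start, so $T=O(\epsilon^{-3})$ iterations suffice; without warm start the accumulated residual error yields $O(\epsilon^{-4})$.

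The main obstacle will be the residual-contraction analysis without full row rank, together with the careful design of $V_t$ so that the non-unique duals do not accumulate. Because the duals can drift freely in the nullspace of $A'^\top$ without affecting the primal objective, $V_t$ must depend on $u,v$ only through quantities that are invariant on this nullspace, such as the constraint residuals themselves; otherwise any naive $\|u_t-u^*\|^2$ term would be uncontrolled. Verifying this invariance while keeping all Lipschitz and strong-convexity constants correctly tracked is where the bulk of the bookkeeping lies.
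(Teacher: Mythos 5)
Your overall skeleton does match the paper's: the paper proves \Cref{thm: Ay} with the same potential function $V_t=\tfrac{1}{4}K(x_t,y'_t,z'_t,u_t,v_t)+2q(x_t,v_t)-d(x_t,z'_t,u_t,v_t)$ used for \Cref{thm: Bx+Ay}, and it exploits exactly the observation you make: since $h$ does not depend on $x$, $\nabla_x K$ and (by Danskin's theorem) $\nabla\phi_\delta$ involve neither $u$ nor $v$, so \Cref{lemma: uv error bounds} and the full-row-rank assumption are never invoked; your remark that the potential must not contain naive $\|u_t-u^*\|^2$ terms is likewise consistent with how $V_t$ is built. The gaps are in the quantitative mechanism. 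You treat the constraint residuals as \emph{error terms} in the descent inequality, to be controlled by a separate ``residual contraction'' argument, and you attribute the cold-start rate $O(\epsilon^{-4})$ to residuals ``decaying at the linear rate dictated by $\eta_u$.'' That claim is both unjustified (GDA on this augmented Lagrangian gives no pointwise residual contraction without additional structure, which is precisely what the missing rank condition would have provided) and incoherent as a rate count: genuinely linear, i.e.\ geometric, decay would cost a $\log(\epsilon^{-1})$ factor, not an $\epsilon^{-1}$ factor. The paper needs no such claim: in its descent inequality \eqref{eq: v} the residuals $\|A'y'_t-b\|^2$, $\|A'z'_t-b\|^2$ and the residuals at $y'^*_\delta(x_t,u_{t+1})$, $z'^*(x_t,v_{t+1})$ appear with a \emph{favorable} sign next to $\tfrac{\eta_x}{4}\|\nabla\phi_\delta(x_t)\|^2$, so a single telescoping \eqref{nabla h} controls everything. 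The entire difference between $O(\epsilon^{-4})$ and $O(\epsilon^{-3})$ then comes from the bound on $V_0-\min_t V_t$: it is $O(1)$ in general but $O(\delta)$ under the warm start \eqref{eq: initial point Ay}--\eqref{eq: initial point Ay end}, after which one divides by $\delta^2$ when passing from $\nabla\phi_\delta$ to $\nabla\Phi_\delta=\tfrac{1}{\delta}\nabla\phi_\delta$.

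Second, your step-size choice and arithmetic do not produce the claimed rates. The paper's analysis is carried out on $\phi_\delta=\delta\Phi_\delta$, whose smoothness and strong-convexity constants are $O(1)$ once $\delta\le\mu_g/(2l_{f,1})$ and $\rho_1,\rho_2$ are in the prescribed ranges, so \emph{all} step sizes, including $\eta_x$, are $\Theta(1)$ constants. Taking $\eta_x=\Theta(\epsilon)$ in that scaling inflates $\tfrac{4}{T\eta_x\delta^2}(V_0-\min_t V_t)$ by an extra $\epsilon^{-1}$ and destroys the warm-start improvement, giving $O(\epsilon^{-4})$ even with good initialization. Moreover, your displayed bound $\min_{t<T}\|\nabla\Phi_\delta(x_t)\|^2\le O(\epsilon)/T$ would imply $T=O(\epsilon^{-1})$ suffices, which contradicts your own conclusion $T=O(\epsilon^{-3})$; the correct warm-start bound is $O(\epsilon^{-1})/T$. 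So while the architecture of your proof is the right one, the steps that actually generate the $O(\epsilon^{-4})$ and $O(\epsilon^{-3})$ complexities need to be replaced by the potential-gap argument: constant step sizes, residuals absorbed on the descent side, and the initialization entering only through $V_0$.
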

The formal statement and the complete proof of \Cref{thm: Ay} can be found in \Cref{app: Ay}. 

\textbf{Proof sketch for \Cref{thm: Ay}}. The general proof flow of  \Cref{thm: Ay} is similar to that of \Cref{thm: Bx+Ay}. However, since here we do not have coupled constraints, $\nabla_x K$ has no relationship to $u$ or $v$, and according to the Danskin's theorem, $\nabla \phi_\delta(x)=\delta \nabla_x f(x,y^*_\delta(x))+\nabla_x g(x,y^*_\delta(x))-\nabla_x g(x, z^*(x))$ also has no relationship to $u^*_\delta(x)$ or $v^*(x)$. Thus, we do not require \Cref{lemma: uv error bounds} or the full row-rank assumption on $A$ in this setting.

Next, we show that, as long as the LICQ condition (\cref{def: licq}) holds for the initial $y^*(x_0)$ and $y^*_\delta(x_0)$, we can find initial points satisfying \eqref{eq: initial point Ay}-\eqref{eq: initial point Ay end} with a complexity of $O(\epsilon^{-2})$ and we have the following corollary.

\begin{corollary}\label{coro: Ay}
When $h(x,y)=Ay-b$, \Cref{assumption: phi}, \ref{assumption: smooth}, \ref{assumption: sc} hold, and $\delta=\Theta(\epsilon)\leq \mu_g/(2l_{f,1})$, for a given initial point $x_0$, if the LICQ  condition (\cref{def: licq}) holds at $y^*(x_0)$ and $y^*_\delta(x_0)$, we can apply \Cref{alg1} with fixed $x_0$. Then for a sufficiently small $\epsilon$ (see \Cref{app: Ay}), we can find $\hat y', \hat z', \hat u, \hat v$ such that
    \begin{align*}
        &\|\hat y-y^*_\delta(x_0)\|\leq O(\delta), \,\|A'y'^*_\delta(x_0,\hat u)-b\|\leq O(\delta), \, \|A'\hat y'-b\|\leq O(\delta), \,\|\hat u-u^*_\delta(x_0)\|\leq O(\delta)\\ &\|\hat z-z^*(x_0)\|\leq O(\delta), \,\|A'z'^*(x_0,\hat v)-b\|\leq O(\delta), \,\|A'\hat z'-b\|\leq O(\delta), \,\|\hat v-v^*(x_0)\|\leq O(\delta)
    \end{align*}
with a complexity of $O(\epsilon^{-2})$. Set $y'_0=\hat y'$, $z'_0=\hat z'$, $u_0=\hat u$, $v_0=\hat v$. With $x_0, y'_0, z'_0, u_0, v_0$ as initial points, we can find an $\epsilon$-stationary point of $\Phi_\delta$
with a complexity of $O(\epsilon^{-3})$. Thus, the total complexity is $O(\epsilon^{-3}+\epsilon^{-2})=O(\epsilon^{-3})$.
\end{corollary}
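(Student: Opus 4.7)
The plan is to split the proof into two stages. Stage one is a warm-start that produces $\hat y', \hat z', \hat u, \hat v$ verifying all the conditions in \eqref{eq: initial point Ay}--\eqref{eq: initial point Ay end} in $O(\epsilon^{-2})$ iterations of \Cref{alg1} with $x$ frozen at $x_0$. Stage two applies the second half of \Cref{thm: Ay}: with those warm-started initial points, \Cref{alg1} finds an $\epsilon$-stationary point of $\Phi_\delta$ in $O(\epsilon^{-3})$ further iterations. Summing gives the advertised total cost $O(\epsilon^{-3})$, so the only real work is stage one.

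Because $h(y)=Ay-b$ is independent of $x$, freezing $x=x_0$ in \Cref{alg1} makes the $(y',u)$ iterates evolve independently of the $(z',v)$ iterates: $\nabla_{y'}K$ and $\nabla_u K$ at $x_0$ depend only on $(y',u)$, and $\nabla_{z'}K, \nabla_v K$ only on $(z',v)$. The warm-start therefore decouples into two standalone primal–dual problems,
\begin{align*}
    \min_{y'\in\Pc_y}\max_{u\in\R^{d_h}} K_y(y',u),\qquad \max_{z'\in\Pc_y}\min_{v\in\R^{d_h}} K_z(z',v),
\end{align*}
where $K_y$, $K_z$ collect the $y$- and $z$-dependent parts of $K(x_0,\cdot)$. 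The LICQ hypothesis at $y^*(x_0)$ and $y^*_\delta(x_0)$, together with \Cref{lemma: multiplier}, guarantees that $u^*_\delta(x_0)$ and $v^*(x_0)$ exist and are unique. Under $\delta\le\mu_g/(2l_{f,1})$ and the stated choices of $\rho_1,\rho_2$, $K_y$ is strongly convex in $y'$ and linear in $u$, and symmetrically for $K_z$. This is the classical strongly-convex–linear saddle setting, and projected GDA on each subproblem admits a last-iterate bound of the form $\|y'_t - y'^*_\delta(x_0)\|^2 + \|u_t - u^*_\delta(x_0)\|^2 = O(1/t)$, and analogously for $(z',v)$; after $T=O(\epsilon^{-2})$ iterations, $\|\hat y - y^*_\delta(x_0)\|$, $\|\hat u - u^*_\delta(x_0)\|$, $\|\hat z - z^*(x_0)\|$, and $\|\hat v - v^*(x_0)\|$ are each $O(\delta)$.

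The remaining feasibility-type conditions follow from these four. Since KKT gives $A' y'^*_\delta(x_0,u^*_\delta(x_0)) - b = 0$, we have $\|A' \hat y' - b\| = \|A'(\hat y' - y'^*_\delta(x_0,u^*_\delta(x_0)))\| \le \|A'\|\cdot O(\delta)$, and similarly $\|A' \hat z' - b\| = O(\delta)$. For $\|A' y'^*_\delta(x_0,\hat u) - b\|$, strong convexity of $K_y$ in $y'$ makes $u \mapsto y'^*_\delta(x_0,u)$ Lipschitz with an $O(1)$ constant, so $\|A' y'^*_\delta(x_0,\hat u) - b\| = \|A'(y'^*_\delta(x_0,\hat u) - y'^*_\delta(x_0,u^*_\delta(x_0)))\| \le \|A'\|\cdot L\cdot\|\hat u - u^*_\delta(x_0)\| = O(\delta)$, and the $(z,v)$ bound is analogous. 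All eight conditions of \eqref{eq: initial point Ay}--\eqref{eq: initial point Ay end} are therefore satisfied after $O(\epsilon^{-2})$ warm-start iterations.

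The main obstacle is obtaining the $O(1/t)$ last-iterate rate for projected GDA on the strongly-convex–linear saddle subproblems: standard analyses typically give only $O(1/\sqrt{t})$ last-iterate or $O(1/t)$ ergodic rates. One resolution is a two-timescale argument $\eta_{y}\gg\eta_u$ that uses the augmented term $(\rho_1/2)\|A'y'-b\|^2$ to upgrade the dual rate; alternatively, the coarser $O(1/\sqrt{t})$ last-iterate bound still attains $O(\epsilon)$ accuracy in $t=O(\epsilon^{-2})$ steps and suffices for the corollary. A secondary technicality is carrying the projection onto $\Pc_y$ through the primal-dual potential, which is standard for projected GDA. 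With the warm-start in hand, the invocation of \Cref{thm: Ay} delivers the $O(\epsilon^{-3})$ main phase, completing the proof.
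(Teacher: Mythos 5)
Your two-stage plan (warm start by running \Cref{alg1} with $x$ frozen at $x_0$, then invoke the fast-rate half of \Cref{thm: Ay}) is exactly the paper's structure, and stage two is fine. The genuine gap is in stage one, at precisely the point you label ``the main obstacle'' and then dismiss. The corollary requires $\|\hat u-u^*_\delta(x_0)\|\leq O(\delta)$ and $\|\hat v-v^*(x_0)\|\leq O(\delta)$ (and, through the Lipschitz map $u\mapsto y'^*_\delta(x_0,u)$, the bounds on $\|A'y'^*_\delta(x_0,\hat u)-b\|$ and $\|A'z'^*(x_0,\hat v)-b\|$), but there is no ``classical'' last-iterate rate --- neither $O(1/t)$ nor $O(1/\sqrt{t})$ --- for the distance of the \emph{dual} iterate to the optimal multiplier in a strongly-convex--linear saddle problem with a projected primal. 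Although the augmented coupling matrix $A'=(A,-I)$ has full row rank, the constraint $\alpha\leq 0$ in $\Pc_y$ destroys strong concavity of the dual function $u\mapsto\min_{y'\in\Pc_y}K(x_0,y',\cdot,u,\cdot)$: it is merely concave and smooth, so gradient ascent controls dual suboptimality and $\|A'y'^*_\delta(x_0,u_t)-b\|$, not $\|u_t-u^*_\delta(x_0)\|$. Converting the former into the latter requires a quadratic-growth/error-bound property of the dual, and establishing that property is the actual content of the corollary, not a ``secondary technicality'' about carrying projections through a potential.

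The paper closes this hole with \Cref{lemma: uv eb licq}, and that is where both the LICQ hypothesis and the ``sufficiently small $\epsilon$'' clause in the statement enter --- your proposal uses neither for the rate, which is the tell that the key mechanism is missing. Concretely, the paper runs \Cref{alg1} with fixed $x_0$ and uses the fixed-$x_0$ potential $W_t=\frac{1}{4}K(x_0,y'_t,z'_t,u_t,v_t)+2q(x_0,v_t)-d(x_0,z'_t,u_t,v_t)$, whose descent inequality \eqref{eq: v} makes $\|y'_{t+1}-y'_t\|^2$, $\|A'y'^*_\delta(x_0,u_{t+1})-b\|^2$, $\|A'y'_t-b\|^2$ and their $(z',v)$ counterparts summable, so after $O(\epsilon^{-2})$ iterations some iterate has all six quantities $\leq\delta$. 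Then, provided $\epsilon\leq\min\{\Delta^\alpha/(6\sigma_\alpha),\Delta^\alpha/(6\sigma_y),\Delta^\beta/(6\sigma_\beta),\Delta^\beta/(6\sigma_z)\}$, the inactive coordinates of the slack iterates stay strictly negative, so the projection onto $\R^{d_h}_-$ is inactive there; complementary slackness ($[u^*_\delta(x_0)]_i=0$ off the active set) then lets one read $[u_{t+1}]_{\Jc^\alpha}$ directly off the $\alpha$-update, while on the active set LICQ gives $\sigma_{\min}(A_{\Ic^\alpha})>0$ and comparing the $y$-update with the KKT stationarity condition of $y^*_\delta(x_0)$ bounds $[u_{t+1}]_{\Ic^\alpha}-[u^*_\delta(x_0)]_{\Ic^\alpha}$. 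This active-set identification argument (and its $(z',v)$ analogue) is what delivers the multiplier bounds; without it, or an equivalent dual error bound, your stage one does not produce the eight conditions in the statement, and the warm-started invocation of \Cref{thm: Ay} is unjustified.
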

The proof of \Cref{coro: Ay} can be found in \Cref{app: Ay}. 
The key to proving \Cref{coro: Ay} lies in \Cref{lemma: uv eb licq}. In \Cref{lemma: uv eb licq}, we prove that, without the full row-rank assumption on $A$, we can bound $\|u_{t+1}-u^*_\delta(x_0)\|$ and $\|v_{t+1}-v^*(x_0)\|$ with a fixed $x_0$. Thus, we can use our algorithm SFLCB to find suitable initial points with a fixed $x_0$.

Note that in \Cref{coro: Ay}, we only need the LICQ condition holds for the initial $y^*(x_0)$ and $y^*_\delta(x_0)$, and we can achieve a total complexity of $O(\epsilon^{-3})$. Compared to the decoupled constrained setting in \cite{kwon2023penalty}, we achieve an improvement in the convergence rate from $O(\epsilon^{-3}\log(\epsilon^{-1}))$ to $O(\epsilon^{-3})$ (See \Cref{tab:comparison}).

\section{Experiments}\label{section: exp}
In this section, we evaluate the performance of our SFLCB algorithm on three tasks: a toy example, hyperparameter optimization for SVM, and a transportation network design problem. These experiments demonstrate the practical effectiveness and efficiency of SFLCB. 
Additional hype-parameter sensitivity analysis experiments and detailed experimental settings can be found in \Cref{app: exp}. 

\subsection{Toy example}
\begin{wrapfigure}{r}{0.34\textwidth}
    \centering
\includegraphics[width=0.34\textwidth]{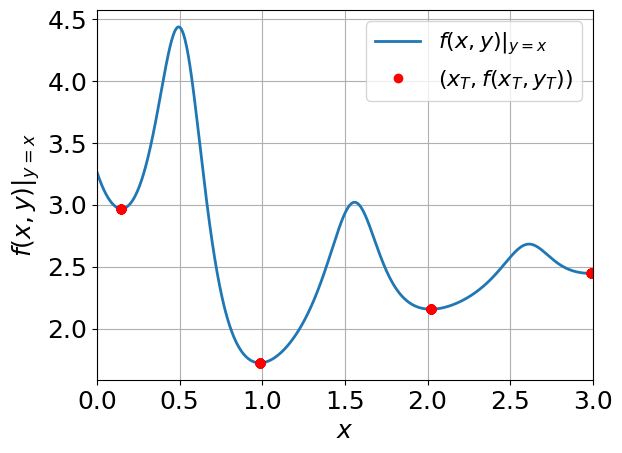}
    \caption{Toy example.}
     \vspace{-0.5in}
    \label{fig: toy example}
\end{wrapfigure}
We consider the same constrained  BLO problem that was studied in \cite{jiang2024primal}, which is: 
\begin{align}\label{toy example}
    &\min_{x \in [0,3]} f(x, y^*(x))=\frac{e^{-y_g^*(x) + 2}}{2 + \cos(6x)} + \frac{1}{2} \ln \left( (4x - 2)^2 + 1 \right)\\
 &\text{s.t. }\quad y^*(x) \in \arg \min_{y \in \mathcal{Y}(x)} g(x, y) = (y - 2x)^2
\end{align}
where $\Yc(x)=\{y\in \R |y\leq x\}$. Note that this problem satisfies \Cref{assumption: phi}, \ref{assumption: smooth}, \ref{assumption: sc}. Moreover, we have $y^*(x)=x$ and \Cref{toy example} is equivalent to $\min_{x \in [0,3]} f(x, y)|_{y=x}$. In \Cref{fig: toy example}, we plot the hyper-objective function $f(x, y^*(x))$. The red points indicate the converged solutions obtained by our algorithm with 200 different initialization values. We notice that SFLCB consistently finds the local minima of the hyper-objective function, which validates the effectiveness of SFLCB.

\begin{wrapfigure}{r}{0.48\textwidth}
    \centering
     \vspace{-0.33in}
\includegraphics[width=\linewidth]{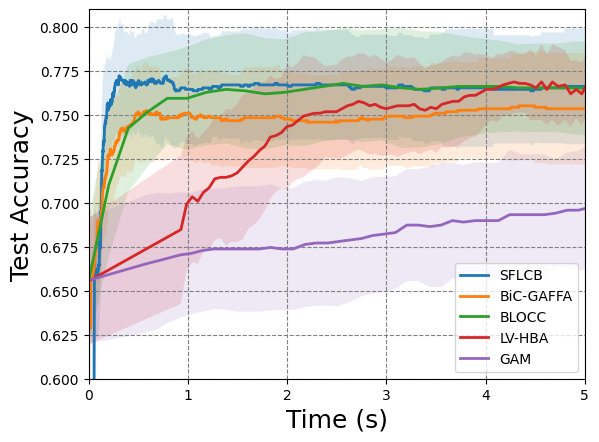}
    \caption{Hyperparameter optimization in SVM.}
    \label{fig: svm}
\end{wrapfigure}

\subsection{Hyperparameter optimization in SVM}
Hyperparameter optimization in SVM is a well-known real-world application for constrained  BLO problems that has been used in many prior works \citep{xu2023efficient, yao2024constrained, yao2024overcoming, jiang2024primal}. Here we consider the same problem formulation as in \cite{jiang2024primal}, which formulates this problem as a coupled linearly constrained BLO problem.
We conduct experiments comparing our SFLCB algorithm with GAM \citep{xu2023efficient}, LV-HBA \citep{yao2024constrained}, BLOCC \citep{jiang2024primal}, and BiC-GAFFA \citep{yao2024overcoming} on the diabetes dataset \citep{dua2017uci}. Results are plotted in \Cref{fig: svm}. We notice that our SFLCB algorithm converges significantly faster than other algorithms, which demonstrates the practical efficiency of the proposed SFLCB algorithm.

\subsection{Transportation network design}
We further conduct experiments on a transportation network design problem, following the same setting as in \cite{jiang2024primal}. In this setting, we act as the operator, whose profit serves as the upper-level objective and is influenced by passenger behavior, which is modeled in the lower-level problem. Detailed formulations and settings can be found in \Cref{app: exp}. We consider the two synthetic networks of 3 and 9 nodes, same as those considered in \cite{jiang2024primal}. We compare SFLCB with BLOCC \citep{jiang2024primal}. Results are plotted in \Cref{fig: Transportation}, which indicate that SFLCB significantly outperforms BLOCC on this network design task.

\begin{figure}[!htp]
    \centering
    \begin{subfigure}[t]{0.45\linewidth}
        \includegraphics[width=\linewidth]{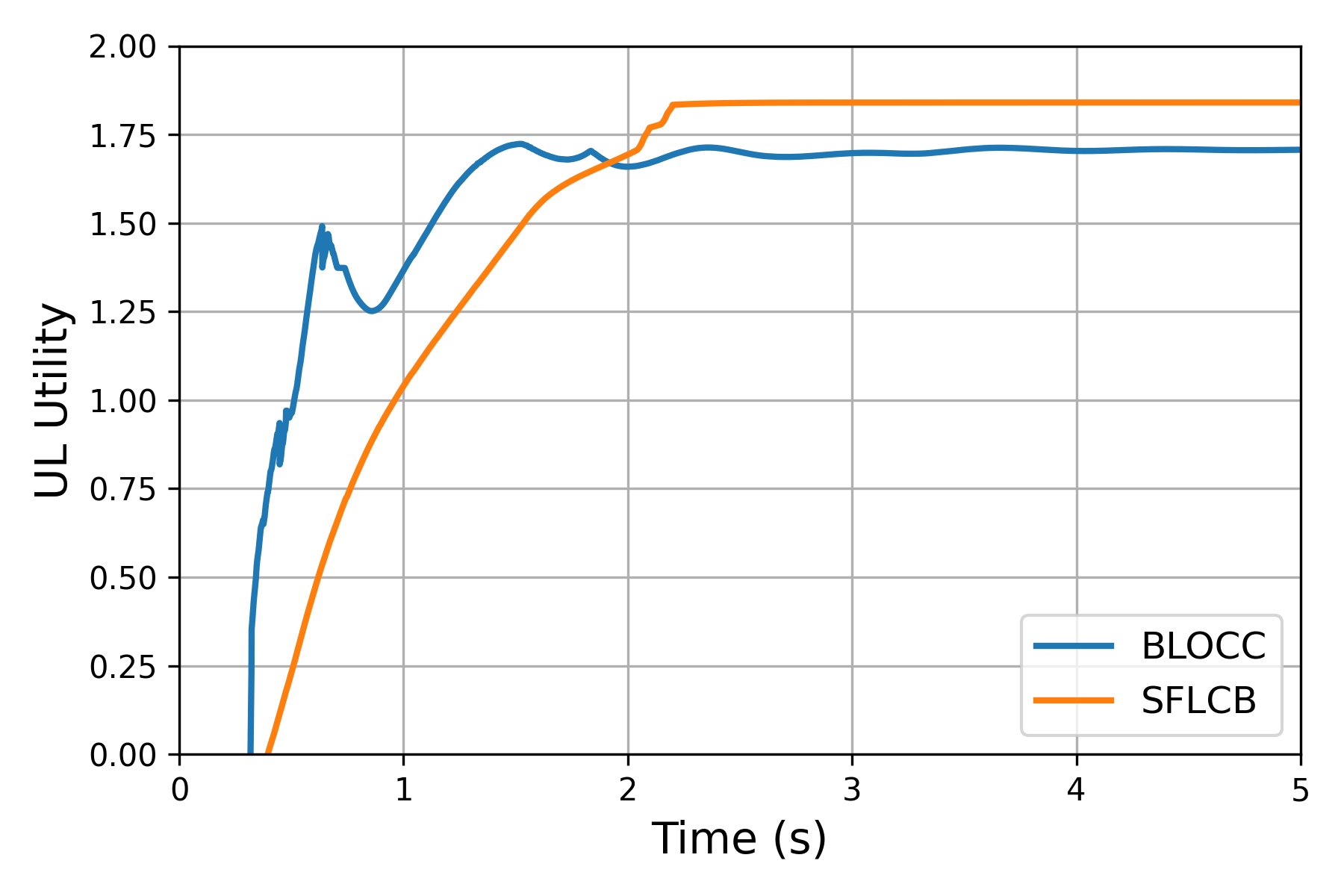}
        \caption{3 nodes }
    \end{subfigure}
    \begin{subfigure}[t]{0.45\linewidth}
        \includegraphics[width=\linewidth]{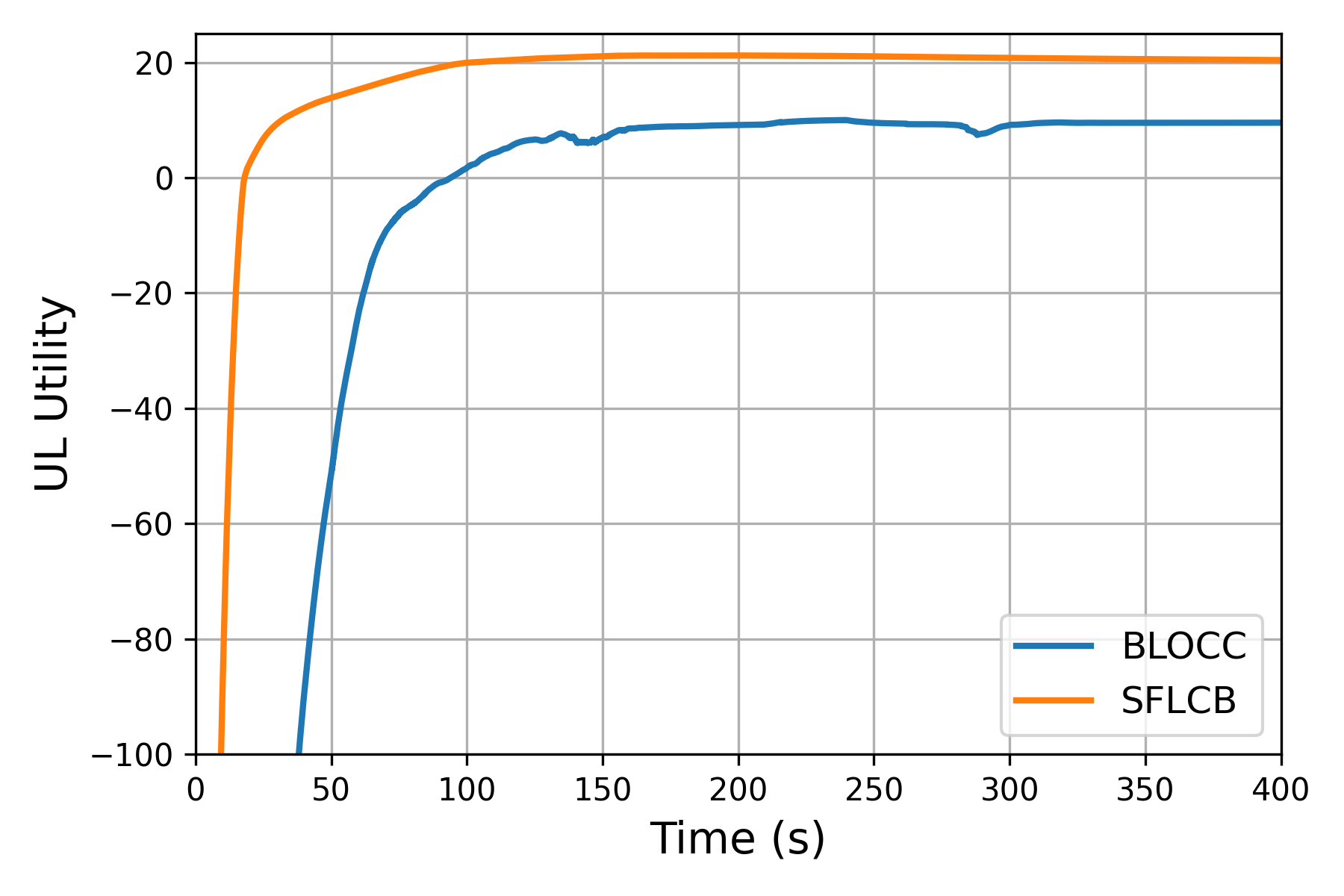}
        \caption{9 nodes }
     \end{subfigure}
    \caption{Results of the transportation experiments on 3 nodes and 9 nodes settings. Larger UL utility indicates better performance.}
    \label{fig: Transportation}
\end{figure}

\subsubsection{Sensitivity analysis of $\delta$}
We also conduct the sensitivity analysis of the $\delta$ in SFLCB for the 3-node network. We set $\rho_1=\rho_2=1000$, $\eta_x=\eta_y=\eta_z=\eta_u=\eta_v=3e-4$, and $T=20000$. Then, we test different $\delta$ values from $\{0.01, 0.05, 0.1, 0.5, 1\}$. For each $\delta$, we test with three different random seeds. The final average results and one standard deviation are reported in \Cref{fig: delta}. As can be seen, larger values of $\delta$ lead to a faster initial decrease in the loss (increase in UL utility). In contrast, very small $\delta$ (e.g., $\delta=0.01$) results in significantly slower convergence overall. However, an overly large $\delta$ (e.g., $\delta=1$) can lead to large approximation errors in later stages, causing deviation from the true optimization objective and ultimately poor performance. We observe that moderate values of $\delta$ (such as 0.05, 0.1, and 0.5) achieve relatively good final performance. These observations are consistent with our theoretical predictions. For example, our theory indicates that the convergence rate of SLFCB is inversely proportional to $\delta$: smaller 
$\delta$ leads to slower convergence but smaller approximation error, whereas larger 
$\delta$ improves convergence speed towards the approximate problem but incurs greater approximation error. \Cref{fig: delta} indicates a properly chosen 
$\delta$ thus can balance convergence speed and approximation error.

\begin{figure}[!htp]
    \centering
        \includegraphics[width=0.58\linewidth]{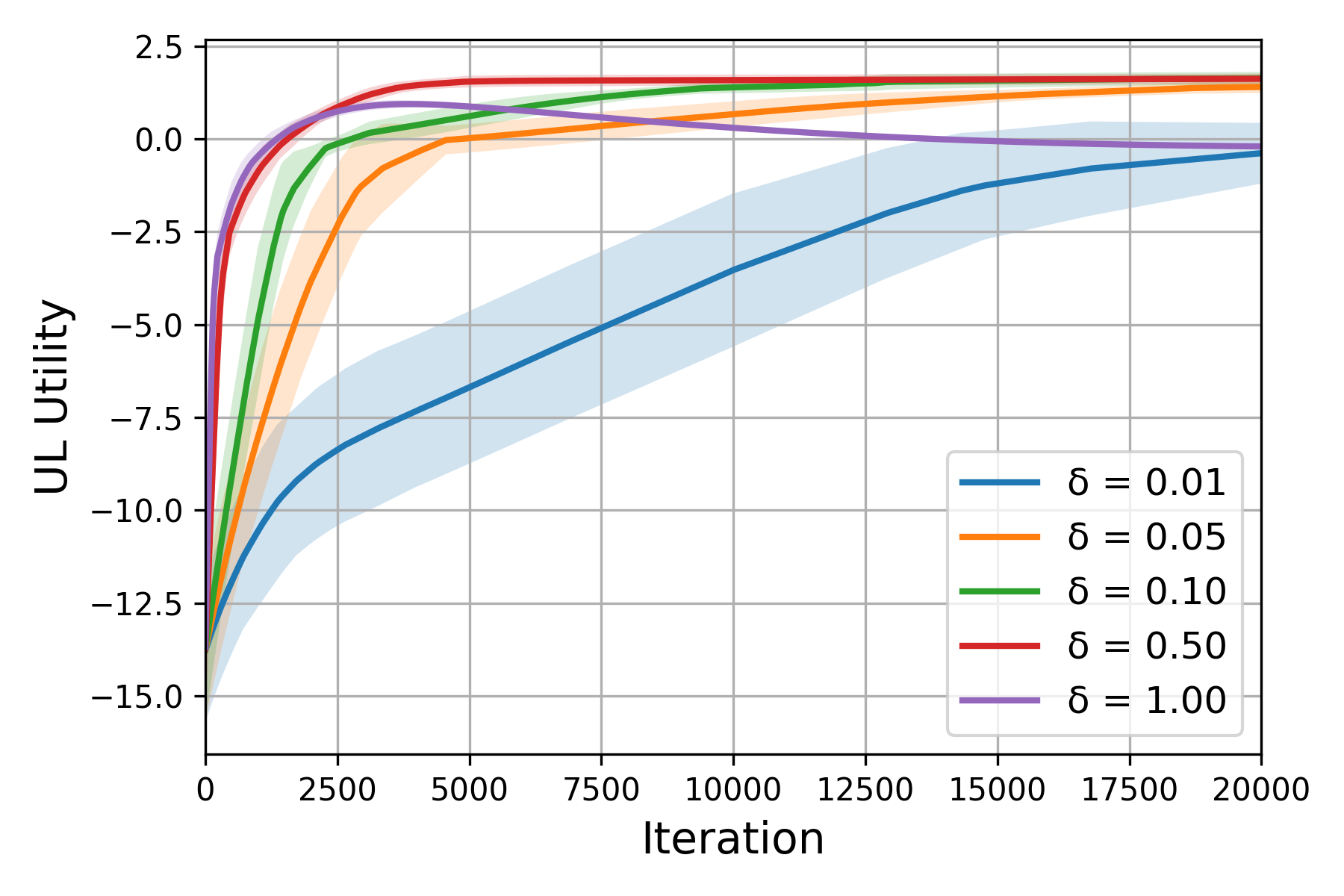}
    \caption{Comparison of different $\delta$ in SFLCB for the 3 nodes network.}
    \label{fig: delta}
\end{figure}

\section{Conclusions and future directions}\label{section: conclusion}
In this paper, for coupled constrained BLO problem in \Cref{problem}, we theoretically analyzed the relationship between the original
hyper-objective $\Phi$ and the reformulated function $\Phi_\delta$ in \Cref{eq: reformulation}, providing a solid justification for the validity of the reformulation. Especially, for the linearly constrained case, we proposed SFLCB, a single-loop, Hessian-free algorithm, improving the convergence rate from $O(\epsilon^{-3}\log(\epsilon^{-1}))$ to $O(\epsilon^{-3})$ over previous works \cite{kwon2023penalty, jiang2024primal}. Our experiments on hyperparameter optimization for SVM and the transportation network design problem validated the practical efficiency of the proposed SFLCB algorithm. One limitation of our work is that the analysis is restricted to deterministic and linearly constrained settings. A promising direction for future research is to extend the current results to stochastic environments or more general constraint structures. Moreover, since the best-known complexity for first-order methods in unconstrained BLO \citep{huang2024optimal} is $O(\epsilon^{-2})$, it is also an interesting problem whether we can achieve this optimal rate in the constrained cases.

\section*{Acknowledgements}
The work of Wei Shen and Cong Shen was supported in part by the US National Science Foundation (NSF) under awards 2143559 and 2332060. The work of Jiawei Zhang was supported by the Office of the Vice Chancellor for Research and Graduate Education at the University of Wisconsin–Madison with funding from the Wisconsin Alumni Research Foundation.

\bibliographystyle{plain}
\bibliography{ref}

\newpage
\appendix

\section*{Appendix}

The Appendix is organized as follows. In \Cref{app: lemma}, we 
introduce some useful lemmas that will be utilized in the subsequent proofs. In \Cref{app: notations}, we introduce some notations that will be used throughout the Appendix. In \Cref{app: Reformulation}, we provide the proofs for the lemmas and theorems in \Cref{section: Reformulation}. In \Cref{app: Ay}, we provide proofs for \Cref{thm: Ay} and \Cref{coro: Ay}. In \Cref{app: Bx+Ay}, we provide proofs for \Cref{thm: Bx+Ay} and \Cref{coro: Bx+Ay}. In \Cref{app: exp}, we present the detailed experimental settings along with additional hyperparameter sensitivity analysis experiments.

\section{Useful Lemmas}\label{app: lemma}
\begin{lemma} [Lemma 12 in \cite{chen2021communication}]\label{lemma: eb}
    Suppose $f(\cdot)$ is $l$-smooth and $\mu$-strongly convex, $\Xc$ is a convex closed set, and $\eta\leq 1/l$. Define $x^*=\argmin_{x\in\Xc} f(x)$ and $x^+=\Pi_\Xc(x-\eta \nabla f(x))$. Then, we have
    \begin{align*}
        \|x-x^*\|\leq \frac{2}{\mu\eta}\|x-x^+\|
    \end{align*}
\end{lemma}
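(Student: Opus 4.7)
The plan is to obtain the error bound from a one-step contraction of the projected gradient iterate toward $x^*$, combined with the triangle inequality. Concretely, I would first establish that
\begin{align*}
\|x^+-x^*\|^2 \leq (1-\mu\eta)\|x-x^*\|^2,
\end{align*}
and then combine this with $\|x-x^*\|\leq \|x-x^+\|+\|x^+-x^*\|$ and the elementary inequality $1-\sqrt{1-t}\geq t/2$ for $t\in[0,1]$.

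For the contraction step, I would use the variational characterization of $x^+$ as the unique minimizer of the strongly convex surrogate $u\mapsto \eta\langle \nabla f(x),u-x\rangle+\tfrac12\|u-x\|^2$ over $\Xc$. By $1$-strong convexity of this surrogate,
\begin{align*}
\eta\langle \nabla f(x),x^*-x\rangle+\tfrac12\|x^*-x\|^2 \;\geq\; \eta\langle \nabla f(x),x^+-x\rangle+\tfrac12\|x^+-x\|^2+\tfrac12\|x^*-x^+\|^2.
\end{align*}
Next, I would use $l$-smoothness with $\eta\leq 1/l$ to lower bound $\eta\langle \nabla f(x),x^+-x\rangle+\tfrac12\|x^+-x\|^2$ by $\eta(f(x^+)-f(x))$ (since the residual $\tfrac{1-l\eta}{2}\|x^+-x\|^2$ is nonnegative), and use $\mu$-strong convexity to upper bound $\eta\langle \nabla f(x),x^*-x\rangle$ by $\eta(f(x^*)-f(x))-\tfrac{\mu\eta}{2}\|x^*-x\|^2$. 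Substituting both bounds and using $f(x^*)\leq f(x^+)$ yields exactly $\|x^+-x^*\|^2\leq (1-\mu\eta)\|x-x^*\|^2$.

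For the final step, the triangle inequality gives $(1-\sqrt{1-\mu\eta})\|x-x^*\|\leq \|x-x^+\|$, and since $\mu\eta\in[0,1]$ by $\eta\leq 1/l\leq 1/\mu$, the elementary bound $1-\sqrt{1-\mu\eta}\geq \mu\eta/2$ (verified by monotonicity of $t\mapsto 1-\sqrt{1-t}-t/2$ on $[0,1]$) yields the claim. The main subtle point is reconciling the natural contraction constant $\sqrt{1-\mu\eta}$ with the stated factor $\tfrac{2}{\mu\eta}$; this is handled cleanly by that elementary inequality rather than by invoking a sharper contraction (which would require a more delicate co-coercivity argument and is unnecessary here).
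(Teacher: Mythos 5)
Your proof is correct: the one-step contraction $\|x^+-x^*\|^2\leq(1-\mu\eta)\|x-x^*\|^2$ is derived soundly from the $1$-strong convexity of the projection surrogate, the descent-type bound enabled by $\eta\leq 1/l$, strong convexity at $(x,x^*)$, and $f(x^*)\leq f(x^+)$; and the passage from the contraction to the stated constant via $1-\sqrt{1-t}\geq t/2$ on $[0,1]$ (valid since $\mu\eta\leq\mu/l\leq 1$) is airtight. Note that the paper itself offers no proof of this statement — it is imported verbatim as Lemma 12 of the cited reference (\cite{chen2021communication}) — so there is no internal argument to compare against; your derivation serves as a self-contained verification of the imported result. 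It is also worth noting how your route differs from the standard one used in such references: the usual argument writes $x^*=\Pi_\Xc(x^*-\eta\nabla f(x^*))$, applies nonexpansiveness of the projection, and then needs co-coercivity of the gradient (not merely strong monotonicity plus Lipschitzness) to get a contraction under the stepsize condition $\eta\leq 1/l$; your function-value-based contraction sidesteps co-coercivity entirely, as you correctly point out, at no cost in the final constant. A minor observation in your favor: your argument nowhere requires $x\in\Xc$, which matters since the lemma statement does not assume it and the paper applies the lemma to points that are only known to lie in the ambient space.
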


\begin{lemma}[Lemma 23 in \cite{lin2020near}]\label{lemma: continuous}
    Suppose for any fixed $y\in \Yc$, $f(x,y)$ is $\mu$-strongly convex w.r.t. $x$ and suppose for any $y_1,y_2\in \Yc, x\in \Xc$,  $\|\nabla_x f(x, y_1)-\nabla_x f(x, y_2)\|\leq l\|y_1-y_2\|$. Define $x^*(y)=\argmin_{x\in\Xc} f(x,y)$. Then, we have
    \begin{align*}
        \|x^*(y_1)-x^*(y_2)\|\leq \frac{l}{\mu}\|y_1-y_2\|
    \end{align*}
\end{lemma}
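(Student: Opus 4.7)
The plan is to prove this via the first-order optimality conditions of the two constrained minimization problems, combined with strong convexity of $f(\cdot, y)$ and the Lipschitz condition on the mixed gradient. The statement is a standard sensitivity-of-minimizers result, so I would follow the textbook variational-inequality route rather than trying anything fancier.

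First I would write the optimality conditions for $x^*(y_1)$ and $x^*(y_2)$ as variational inequalities on the convex set $\Xc$:
\begin{align*}
\langle \nabla_x f(x^*(y_1), y_1),\, x - x^*(y_1)\rangle &\geq 0 \quad \forall x\in\Xc,\\
\langle \nabla_x f(x^*(y_2), y_2),\, x - x^*(y_2)\rangle &\geq 0 \quad \forall x\in\Xc.
\end{align*}
Plugging $x=x^*(y_2)$ into the first and $x=x^*(y_1)$ into the second and adding gives
\begin{align*}
\langle \nabla_x f(x^*(y_1), y_1) - \nabla_x f(x^*(y_2), y_2),\, x^*(y_1) - x^*(y_2)\rangle \leq 0.
\end{align*}

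Next I would split the gradient difference at the intermediate point $(x^*(y_2), y_1)$:
\begin{align*}
\nabla_x f(x^*(y_1), y_1) - \nabla_x f(x^*(y_2), y_2) = \bigl[\nabla_x f(x^*(y_1), y_1) - \nabla_x f(x^*(y_2), y_1)\bigr] + \bigl[\nabla_x f(x^*(y_2), y_1) - \nabla_x f(x^*(y_2), y_2)\bigr].
\end{align*}
The first bracket, paired with $x^*(y_1)-x^*(y_2)$, is at least $\mu\|x^*(y_1)-x^*(y_2)\|^2$ by $\mu$-strong convexity of $f(\cdot, y_1)$. The second bracket has norm at most $l\|y_1-y_2\|$ by the hypothesis, so Cauchy--Schwarz controls its inner product with $x^*(y_1)-x^*(y_2)$ by $-l\|y_1-y_2\|\,\|x^*(y_1)-x^*(y_2)\|$.

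Combining these two bounds with the variational inequality above yields
\begin{align*}
\mu \|x^*(y_1)-x^*(y_2)\|^2 \leq l\|y_1-y_2\|\,\|x^*(y_1)-x^*(y_2)\|,
\end{align*}
and dividing by $\|x^*(y_1)-x^*(y_2)\|$ (the case when it vanishes is trivial) gives the claim. There is no real obstacle here; the only subtlety is making sure the strong-convexity inequality is used at a common second argument ($y_1$) so that the Lipschitz-in-$y$ assumption can absorb the remaining discrepancy, which is why I introduce the intermediate point $(x^*(y_2), y_1)$ rather than splitting symmetrically.
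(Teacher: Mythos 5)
Your proof is correct. Note that the paper does not actually prove this lemma at all; it imports it by citation (Lemma 23 of \cite{lin2020near}), so your argument serves as a self-contained replacement for that citation, and it is the standard one: the variational-inequality optimality conditions at $x^*(y_1)$ and $x^*(y_2)$, the splitting at the intermediate point $(x^*(y_2),y_1)$ so that strong monotonicity of $\nabla_x f(\cdot,y_1)$ and the cross-Lipschitz hypothesis can be applied separately, and Cauchy--Schwarz. The only implicit requirement worth flagging is that $\Xc$ must be convex and closed for the variational-inequality characterization of the minimizers to be valid (and strong convexity then gives existence and uniqueness of $x^*(y)$); this matches how the lemma is used throughout the paper, where it is only ever invoked on convex closed feasible sets.
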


\begin{lemma}[Theorem 4.1 in \cite{zhang2022global}]\label{lemma: linear}
Suppose $f(x): \R^{d_1}\to \R$ is $l$-smooth and $\mu$-strongly convex, $\Pc=\{x | Cx\leq e\}, \Sc(r)=\{x\in \Pc | Ax-b=r\}$, where $C, A\in\R^{d_2\times d_1}$, $e,b,r\in \R^{d_2}$.
Define
\begin{align*}
    &L(x,u)=f(x)+u^\top(Ax-b)\\
    &x^*(u)=\argmin_{x\in \Pc}L(x,u)\\
    &x^*_r=\argmin_{x\in \Sc(r)}f(x)
\end{align*}
where $u\in\R^{d_2}$ is the Lagrange multiplier. 
We have
\begin{align*}
    &\|x^*(u)-x^*_0\|\leq \sigma_x\|Ax^*(u)-b\|\\
    &\|x^*_r-x^*_0\|\leq \sigma_x\|r\|
\end{align*}
where 
    \begin{align*}
        &\sigma_x=\frac{\sqrt{2}(\Bar{\theta}l^2+1)}{\mu}
    \end{align*}

\begin{align*}
   M =
    \begin{pmatrix}
        A^\top & C^\top \\
        0 & I
    \end{pmatrix}
\end{align*}

\begin{align*}
    \Bar{\theta}=\max_{\Bar{M}\in \mathcal{B}(M)}\sigma^2_{\max}(\Bar{M})/\sigma^4_{\min}(\Bar{M})
\end{align*}
$\Bar{M}$ is the set of all submatrices of $M$ with full row rank.
\end{lemma}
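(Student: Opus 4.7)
\textbf{Proof proposal for \Cref{lemma: linear}.}

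The plan is to prove the second estimate first and then obtain the first one as a direct consequence. For the first estimate, given $u$, set $r := Ax^*(u) - b$. Since $x^*(u) \in \Pc$ and $Ax^*(u) = b+r$, we have $x^*(u) \in \Sc(r)$. The KKT conditions for $x^*(u) = \argmin_{x\in\Pc} L(x,u)$ read $\nabla f(x^*(u)) + A^\top u + C^\top \nu = 0$ together with $\nu \ge 0$, $\nu^\top(Cx^*(u)-e)=0$, and these are exactly the KKT conditions of $\min_{x\in\Sc(r)} f(x)$ with the same inequality multipliers and with $u$ playing the role of the equality multiplier. Because $f$ is $\mu$-strongly convex, $x^*_r$ is unique, so $x^*(u) = x^*_r$ and the first estimate follows from the second applied to this $r$.

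For the second estimate, I would combine strong convexity with the KKT conditions at $x^*_0$ and $x^*_r$. Let $u_0,\nu_0$ (resp. $u_r,\nu_r$) be Lagrange multipliers so that $\nabla f(x^*_0) = -A^\top u_0 - C^\top \nu_0$ with $\nu_0 \ge 0$, $\nu_0^\top(Cx^*_0-e)=0$, and analogously at $x^*_r$. Applying strong convexity at $x^*_r$ evaluated at $x^*_0$, and using $A(x^*_0-x^*_r)=-r$ together with $\nu_r^\top(Cx^*_0-e)\le 0$, yields
\begin{align*}
f(x^*_0) - f(x^*_r) \;\ge\; u_r^\top r + \tfrac{\mu}{2}\|x^*_0-x^*_r\|^2.
\end{align*}
Symmetrically, strong convexity at $x^*_0$ evaluated at $x^*_r$ gives
\begin{align*}
f(x^*_r) - f(x^*_0) \;\ge\; -u_0^\top r + \tfrac{\mu}{2}\|x^*_r-x^*_0\|^2.
\end{align*}
Adding these two inequalities collapses the function values and produces the central estimate
\begin{align*}
\mu\|x^*_r - x^*_0\|^2 \;\le\; (u_0 - u_r)^\top r \;\le\; (\|u_0\|+\|u_r\|)\,\|r\|.
\end{align*}

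It then remains to bound $\|u_0\|$ and $\|u_r\|$. This is where the quantity $\bar\theta$ enters. Let $I_0,I_r$ be the active index sets of the inequality constraints at $x^*_0,x^*_r$. The KKT stationarity at $x^*_r$ says $(u_r,\nu_{r,I_r})$ solves the linear system
\begin{align*}
\begin{pmatrix} A^\top & C_{I_r}^\top \\ 0 & I \end{pmatrix}\begin{pmatrix} u \\ \nu_{I_r} \end{pmatrix} \;=\; \begin{pmatrix} -\nabla f(x^*_r) \\ \nu_{I_r} \end{pmatrix},\quad \nu_{I_r}\ge 0.
\end{align*}
By extracting a full-row-rank submatrix $\bar M$ of $M$ (the Carathéodory / basic feasible solution argument), one may always select multipliers with $\|(u,\nu_{I})\| \le \|\bar M^{-1}\|\cdot\|\nabla f(x^*_r)\|$, which gives $\|u_r\|^2 \le \bar\theta\,\|\nabla f(x^*_r)\|^2$ and similarly for $u_0$. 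Smoothness then converts $\|\nabla f(x^*_r)\|$ into a Lipschitz factor $l$ times a constant via $\|\nabla f(x^*_r)\| \le l\,\|x^*_r - x^*_c\|$ where $x^*_c$ is the unconstrained minimizer, and the final constant $\sigma_x = \sqrt{2}(\bar\theta l^2+1)/\mu$ comes from combining these pieces with the $\mu\|x^*_r-x^*_0\|^2$ inequality and an AM-GM absorption.

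The main obstacle I expect is the multiplier bound: Lagrange multipliers are not unique without LICQ, so one has to carefully select a ``nice'' multiplier (the minimum-norm or a basic-feasible-solution one) and bound it uniformly by the singular-value data of submatrices of $M$. This is exactly the Hoffman-lemma style argument that forces the appearance of $\bar\theta = \max_{\bar M}\sigma_{\max}^2(\bar M)/\sigma_{\min}^4(\bar M)$ in the final constant, and it is the only nontrivial combinatorial piece of the proof; everything else is strong convexity and complementary slackness.
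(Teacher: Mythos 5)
First, a point of reference: the paper does not prove this lemma at all --- it is imported verbatim as Theorem~4.1 of \cite{zhang2022global} and used as a black box, so there is no internal proof to compare your argument against; what follows assesses your attempt on its own terms. Your first two steps are correct. The identification $x^*(u)=x^*_r$ with $r=Ax^*(u)-b$ (the KKT system of $\min_{x\in\Pc}L(x,u)$ is exactly a KKT system for $\min_{x\in\Sc(r)}f$, affine constraints need no constraint qualification, and strong convexity gives uniqueness) is a clean reduction of the first estimate to the second, and the central inequality $\mu\|x^*_r-x^*_0\|^2\le (u_0-u_r)^\top r$ follows correctly from strong convexity plus complementary slackness.

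The gap is in the final step, and it is fatal as sketched. If you bound $\|u_0\|$ and $\|u_r\|$ by quantities that do not shrink with $\|r\|$ --- which is all your sketch provides --- then the central inequality only yields $\|x^*_r-x^*_0\|\le\sqrt{(\|u_0\|+\|u_r\|)\|r\|/\mu}=O(\sqrt{\|r\|})$, which is strictly weaker than the claimed $\sigma_x\|r\|$ precisely in the regime $\|r\|\to 0$ that matters for the paper (there $\|r\|=O(\delta)$). No AM--GM absorption can repair this: absorption requires the upper bound on $(u_0-u_r)^\top r$ to contain a factor of $\|x^*_r-x^*_0\|$ (or an extra factor of $\|r\|$), i.e.\ something like $\|u_0-u_r\|\le C_1 l\|x^*_r-x^*_0\|+C_2\|r\|$ for a suitable \emph{selection} of multipliers. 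Establishing such a Lipschitz-type bound on dual solutions when the active sets at $x^*_0$ and $x^*_r$ differ and multipliers are non-unique is exactly the hard combinatorial content of the cited theorem, and your sketch does not supply it. Moreover, the specific multiplier bound you propose is not valid as written: the linear system you display has $\nu_{I_r}$ on both sides (its second block row is vacuous), the inequality $\|u_r\|^2\le\bar\theta\,\|\nabla f(x^*_r)\|^2$ does not follow from a basic-solution argument since $\sqrt{\bar\theta}=\sigma_{\max}(\bar M)/\sigma_{\min}^2(\bar M)\neq\|\bar M^{-1}\|$, and replacing $\|\nabla f(x^*_r)\|$ by $l$ times the distance to the unconstrained minimizer introduces a problem-dependent constant (depending on that distance, hence on $r$) that is incompatible with the uniform constant $\sigma_x=\sqrt{2}(\bar\theta l^2+1)/\mu$ asserted in the statement.
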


\begin{lemma}\label{lemma: sc}
    Suppose $f(x): \R^{d_x}\to \R$ is $\mu$-strongly convex w.r.t. $x$. Define $L(x, \alpha, u)=f(x)+u^\top(Ax-b-\alpha)+\frac{\rho}{2}\|Ax-b-\alpha\|^2$, where $\alpha,u,b\in\R^{d_y}$, $A\in\R^{d_y\times d_x}$, $\rho\in(0,\mu/\sigma^2_{\max}(A))$. Denoting  $x'=(x^\top, \alpha^\top)^\top$, $L(x', u)=L(x,\alpha,u)$, we have $L(x',u)$ is $\mu_x$-strongly convex w.r.t. $x'$, where $\mu_x=\min\{\mu-\rho\sigma_{\max}^2(A), \frac{\rho}{2}\}$.
\end{lemma}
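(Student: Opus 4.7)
The plan is to prove strong convexity from the definition, using only the fact that $f$ is $\mu$-strongly convex in $x$ and that the rest of $L$ is a fixed quadratic in $x'$. First I would rewrite the penalty in block form. Setting $A' = [A,\, -I] \in \mathbb{R}^{d_y \times (d_x + d_y)}$ gives $Ax - b - \alpha = A' x' - b$, so
\begin{equation*}
L(x', u) \;=\; f(x) \;+\; u^\top (A' x' - b) \;+\; \tfrac{\rho}{2}\|A' x' - b\|^2.
\end{equation*}
The term $u^\top(A'x' - b)$ is affine in $x'$ and contributes nothing to the strong convexity modulus, so the whole task reduces to showing that $G(x') := f(x) + \tfrac{\rho}{2}\|A'x' - b\|^2$ is $\mu_x$-strongly convex in $x'$.

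Next I would apply the two convex-combination identities directly. For any $x'_1, x'_2$ and $\theta \in [0,1]$, $\mu$-strong convexity of $f$ gives $f(\theta x_1 + (1-\theta)x_2) \le \theta f(x_1) + (1-\theta)f(x_2) - \tfrac{\mu}{2}\theta(1-\theta)\|x_1 - x_2\|^2$, and the exact quadratic identity for $\tfrac{1}{2}\|A'x' - b\|^2$ gives a deficit of $\tfrac{\rho}{2}\theta(1-\theta)\|A'(x'_1 - x'_2)\|^2$. Writing $p = x_1 - x_2$ and $q = \alpha_1 - \alpha_2$ so that $A'(x'_1 - x'_2) = Ap - q$, it then suffices to establish the pointwise bound
\begin{equation*}
\mu\|p\|^2 \;+\; \rho \|Ap - q\|^2 \;\ge\; \mu_x\bigl(\|p\|^2 + \|q\|^2\bigr).
\end{equation*}

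The key step is a one-line Young-type inequality to separate $q$ from $Ap$. From $\|q\|^2 = \|q - Ap + Ap\|^2 \le 2\|Ap - q\|^2 + 2\|Ap\|^2$ I get $\|Ap - q\|^2 \ge \tfrac{1}{2}\|q\|^2 - \|Ap\|^2 \ge \tfrac{1}{2}\|q\|^2 - \sigma_{\max}^2(A)\|p\|^2$. Substituting yields
\begin{equation*}
\mu\|p\|^2 + \rho\|Ap - q\|^2 \;\ge\; \bigl(\mu - \rho\,\sigma_{\max}^2(A)\bigr)\|p\|^2 + \tfrac{\rho}{2}\|q\|^2,
\end{equation*}
and both coefficients are $\ge \mu_x = \min\{\mu - \rho\sigma_{\max}^2(A),\, \rho/2\}$ by definition (and positive thanks to the hypothesis $\rho < \mu/\sigma_{\max}^2(A)$). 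Combining everything gives $G((1-\theta)x'_1 + \theta x'_2) \le (1-\theta)G(x'_1) + \theta G(x'_2) - \tfrac{\mu_x}{2}\theta(1-\theta)\|x'_1 - x'_2\|^2$, which is exactly $\mu_x$-strong convexity of $L(\cdot, u)$ in $x'$.

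The only mildly subtle point — and what I would call the main obstacle — is the cross-block coupling $-\rho A^\top$ in the Hessian of $\tfrac{\rho}{2}\|Ax - \alpha - b\|^2$: a naive split of the penalty into an $A^\top A$-piece in $x$, an $I$-piece in $\alpha$, and cross terms does not yield two separately strongly convex summands, because the quadratic form is only positive semidefinite in $x'$, not positive definite. The argument above sidesteps this by spending a fraction of the $f$-strong-convexity budget to absorb the $\sigma_{\max}^2(A)\|p\|^2$ term that is inevitably shed when we isolate $\|q\|^2$ from $\|Ap - q\|^2$, which is precisely why the assumption $\rho < \mu/\sigma_{\max}^2(A)$ is needed.
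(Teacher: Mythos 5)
Your proof is correct and takes essentially the same route as the paper's: both cancel the affine $u$-term, reduce to the exact quadratic deficit $\tfrac{\rho}{2}\|A'(x'_1-x'_2)\|^2$, and apply the same Young-type inequality $\|Ap-q\|^2 \geq \tfrac{1}{2}\|q\|^2 - \sigma_{\max}^2(A)\|p\|^2$ to absorb the shed term into the strong-convexity budget of $f$, yielding $\mu_x=\min\{\mu-\rho\sigma_{\max}^2(A),\rho/2\}$. The only difference is cosmetic: you use the convex-combination characterization of strong convexity whereas the paper uses the first-order (gradient) characterization, which incidentally makes your version valid even when $f$ is not differentiable.
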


\begin{proof}

For any $x_1, x_2\in\R^{d_x}, \alpha_1, \alpha_2\in\R^{d_y}$, denoting $x'_1=(x_1^\top, \alpha_1^\top)^\top$, $x'_2=(x_2^\top, \alpha_2^\top)^\top$, $A'=(A, -I)$, we have
\begin{align*}
    &L(x'_1,u)-L(x'_2,u)-\langle\nabla_{x'} L(x'_2, u), x'_1-x'_2\rangle\\
    =&f(x_1)-f(x_2)+u^\top A'(x'_1-x'_2)+\frac{\rho}{2}\|A'x'_1-b\|^2-\frac{\rho}{2}\|A'x'_2-b\|^2\\
    &-\langle\nabla_x f(x_2), x_1-x_2 \rangle-u^\top A'(x'_1-x'_2)-\rho(A'^\top A'x'_2-A'^\top b)^\top(x'_1-x'_2)\\
    \geq&\frac{\mu}{2}\|x_1-x_2\|^2+\frac{\rho}{2}\|A'(x'_1-x'_2)\|^2\\
    \geq&\frac{\mu}{2}\|x_1-x_2\|^2+\frac{\rho}{4}\|\alpha_1-\alpha_2\|^2-\frac{\rho}{2}\|A(x_1-x_2)\|^2\\
    \geq& \frac{\mu-\rho\sigma_{\max}^2(A)}{2}\|x_1-x_2\|^2+\frac{\rho}{4}\|\alpha_1-\alpha_2\|^2\\
    \geq&\frac{\mu_x}{2}\|x'_1-x'_2\|^2
\end{align*}
where $\mu_x=\min\{\mu-\rho\sigma_{\max}^2(A), \frac{\rho}{2}\}$.
\end{proof}

\section{Notations}\label{app: notations}

Denote $l_{\delta}=\mu_g/2$. We introduce the following notations.
\begin{align*}
    &\Phi_{\delta}(x, y, z) = \frac{1}{\delta} \left(  \delta f(x,y) + g(x,y) -  g(x,z) \right)\\
    &\phi_\delta(x, y, z)=\delta f(x,y) + g(x,y) -  g(x,z)\\
    &g_{\delta}(x,y) = \delta f(x,y) + g(x,y)\\
    &y^*_\delta(x)=\min_{y \in \Yc(x)} g_{\delta}(x,y)\\
    &y^*(x)=\min_{y \in \Yc(x)} g(x,y)\\
    &L_g=l_\delta+l_{g,1}\\
    &L_\phi=l_\delta+2l_{g,1}\\
    &\Phi(x)=f(x, y^*(x))\\
    &\Phi_\delta(x)=\Phi_{\delta}(x, y^*_\delta(x), y^*(x))\\
    &\phi_\delta(x)=\phi_{\delta}(x, y^*_\delta(x), y^*(x))\\
\end{align*}
When $\delta\leq \mu_g/2l_{f,1}$, we have $\delta l_{f,1}\leq \mu_g/2=l_{\delta}$ and thus,  $g_{\delta}(x,y)$ is $L_g$-smooth and $\phi_\delta(x,y,z)$ is $L_\phi$-smooth.

\section{Reformulation}\label{app: Reformulation}
In this section, we provide the proofs for the lemmas and theorems in \Cref{section: Reformulation}.

\subsection*{\Cref{thm: reformulation}}
\emph{When \Cref{assumption: phi}, \ref{assumption: smooth} and \ref{assumption: sc} hold, we have
    \begin{align*}
    &0\leq \Phi(x)-\Phi_\delta(x)\leq \frac{\delta l_{f,0}^2}{2\mu_g}\\
    &\|y_\delta^*(x)-y^*(x)\|\leq\frac{2\delta l_{f,0}}{\mu_g}.
    \end{align*}}
\begin{proof}
Similar results and proofs of \Cref{thm: reformulation} can also be found in \cite{kwon2023penalty, jiang2024primal}. For completeness, we also provide our proofs for \Cref{thm: reformulation} here.
    
Note that $y_\delta^*(x)$ satisfy
    \begin{align*}
        \Pi_{\Yc(x)}(y_\delta^*(x)-[\delta \nabla_y f(x, y_\delta^*(x))+\nabla_y g(x, y_\delta^*(x))]/L_\phi )=y_\delta^*(x)
    \end{align*}

Since $g(x, \cdot)$ is $\mu_g$-strongly concave and $l_{g,1}$-smooth and $L_\phi\geq l_{g,1}$, according to \Cref{lemma: eb}, we have
\begin{align*}
    \|y_\delta^*(x)-z^*(x)\|\leq& \frac{2L_\phi}{\mu_g}\|y_\delta^*(x)-\Pi_{\Yc(x)}(y_\delta^*(x)-\nabla_y g(x, y_\delta^*(x))/L_\phi)\|\\
    =&\frac{2L_\phi}{\mu_g}\|\Pi_{\Yc(x)}(y_\delta^*(x)-[\delta \nabla_y f(x, y_\delta^*(x))+\nabla_y g(x, y_\delta^*(x))]/L_\phi )\\
    &-\Pi_{\Yc(x)}(y_\delta^*(x)-\nabla_y g(x, y_\delta^*(x))/L_\phi)\|\\
    \leq&\frac{2\delta}{\mu_g}\|\nabla_y f(x, y_\delta^*(x))\|\\
    \leq&\frac{2\delta l_{f,0}}{\mu_g}.
\end{align*}
For $\Phi_\delta(x)$, we have
    \begin{align*}
        \Phi_\delta(x)=&f(x, y_\delta^*(x))+\frac{1}{\delta}(g(x, y_\delta^*(x))-g(x,y^*(x)))\\
        \leq& f(x, y^*(x))+\frac{1}{\delta}(g(x, y^*(x))-g(x,y^*(x)))\\
        =&\Phi(x),
    \end{align*}
and
    \begin{align*}
        \Phi_\delta(x)=&f(x, y_\delta^*(x))+\frac{1}{\delta}[g(x, y^*_\delta(x))-g(x, y^*(x))]\\
        \geq& f(x, y_\delta^*(x))+\frac{\mu_g}{2\delta}\|y_\delta^*(x)-y^*(x)\|^2\\
        \geq&f(x, y^*(x))+\frac{\mu_g}{2\delta}\|y_\delta^*(x)-y^*(x)\|^2-l_{f,0}\|y_\delta^*(x)-y^*(x)\|\\
        \geq&\Phi(x)-\frac{\delta l_{f,0}^2}{2\mu_g},
    \end{align*}
where the first equality is due to the quadratic growth of a strongly convex function, the last equality is due to $ax^2+bx\geq -b^2/(4a)$.

\end{proof}

\subsection*{\Cref{lemma: nabla phi delta}}
\emph{When \Cref{assumption: phi}, \ref{assumption: smooth}, \ref{assumption: sc} hold and $\delta \leq \mu_g/(2l_{f,1})$, if, for all $x\in \Xc$, LICQ condition (\Cref{def: licq}) hold for $y^*(x)$ and $y^*_\delta(x)$, then there exist the corresponding unique Lagrangian multipliers $\lambda^*(x)\in \R^{d_h}$ and $\lambda^*_\delta(x)\in \R^{d_h}$ such that
    \begin{align}
        &\lambda^*(x)=\argmax_{\lambda \in \R_+}\min_{y \in \Yc(x)}g(x,y)+\lambda^\top h(x,y)\\ 
        &\lambda^*_\delta(x)=\argmax_{\lambda \in \R_+}\min_{y \in \Yc(x)}\delta f(x,y)+g(x,y)+\lambda^\top h(x,y).
    \end{align}
Furthermore, we have
    \begin{align}\nonumber
        \nabla \Phi_\delta(x)=&\nabla_x f(x, y^*_\delta(x))+\frac{1}{\delta}[\nabla_x g(x, y^*_\delta(x))+\nabla_x h(x, y^*_\delta(x))\lambda^*_\delta(x)\\
        &-\nabla_x g(x, y^*(x))-\nabla_x h(x, y^*(x))\lambda^*(x)].
    \end{align}}
    
\begin{proof}
    The uniqueness of Lagrangian multipliers is a direct result from the LICQ condition \citep{wachsmuth2013licq}. 
    According to Lemmas 2 and 3 in \cite{jiang2024primal}. We know that when \Cref{assumption: phi}, \ref{assumption: smooth}, \ref{assumption: sc} hold,  $\delta \leq \mu_g/(2l_{f,1})$, if, for all $x\in \Xc$, LICQ condition (\Cref{def: licq}) holds for $y^*(x)$ and $y^*_\delta(x)$, defining $\psi(x)=g(x, y^*(x))$ and $\psi_\delta(x)=g_\delta(x, y^*_\delta(x))$, we have
    \begin{align*}
        &\nabla_x \psi(x)=\nabla_x g(x, y^*(x))+\nabla_x h(x, y^*(x))\lambda^*(x)\\
        &\nabla_x \psi_\delta(x)=\delta \nabla_x f(x,y^*_\delta(x))+\nabla_x g(x,y^*_\delta(x))+\nabla_x h(x, y^*_\delta(x))\lambda^*_\delta(x)
    \end{align*}
    Since $\Phi_\delta(x)=f(x,y_\delta^*(x))+\frac{1}{\delta}(g_\delta(x, y^*_\delta(x))-g(x, y^*(x)))$, we have
    \begin{align*}
        \nabla \Phi_\delta(x)=&\nabla_x f(x, y^*_\delta(x))+\frac{1}{\delta}[\nabla_x g(x, y^*_\delta(x))+\nabla_x h(x, y^*_\delta(x))\lambda^*_\delta(x)\\
        &-\nabla_x g(x, y^*(x))-\nabla_x h(x, y^*(x))\lambda^*(x)].
    \end{align*}
\end{proof}

\subsection*{\Cref{lemma: existence of nabla Phi}}
\emph{When \Cref{assumption: phi}, \ref{assumption: smooth}, \ref{assumption: sc}, \ref{assumption: hessian smooth} hold, if, for a given $x$, \Cref{assumption: Strict Complementarity} and the LICQ condition (\Cref{def: licq}) holds for $y^*(x)$, then $\nabla \Phi(x)$ exists at $x$ and can be expressed as
\begin{align*}
    \nabla \Phi(x)=\nabla_x f(x,y)+(\nabla y^*(x))^\top \nabla_y f(x,y^*(x))
\end{align*}
where $\nabla y^*(x)$ can be calculated according to \eqref{eq: nabla y}.
}

\begin{proof}
According to Theorem 2 in \cite{xu2023efficient}, we know that when \Cref{assumption: phi}, \ref{assumption: smooth}, \ref{assumption: sc}, \ref{assumption: hessian smooth} hold, if, for a given $x$, \Cref{assumption: Strict Complementarity} and  the LICQ (\Cref{def: licq}) condition holds for $y^*(x)$, then $\nabla \Phi(x)$ exists at $x$.

Moreover, we can give the explicit expression of $\nabla \Phi(x)$.

With $\lambda\in \R^{d_h}$, we have the following Lagrangian function
\begin{align*}
    \Lc(x,y, \lambda)=g(x,y) + \lambda^\top h(x,y)
\end{align*}

Denote $\lambda^*(x)$ as the optimal Lagrangian multiplier,  $\Ic_{x}\subseteq [d_h]$ as the active set of $y^*(x)$, i.e. $\Ic_{x}=\{i\in [d_h] [h(x,y)]_i=0\}$
Denote $\Bar{h}(x,y^*(x))=[h(x,y^*(x))]_{\Ic_{x}}$ and $\Bar{\lambda}^*(x)=[\lambda^*(x)]_{\Ic_{x}}$. We have the following KKT conditions:
\begin{align*}
    &\nabla_y g(x, y^*(x))+\nabla_y\Bar{h}(x,y^*(x))^\top \Bar{\lambda}^*(x)=0\\
    &\Bar{h}(x,y^*(x))=0.
\end{align*}
Differentiating the KKT conditions with respect to $x$, we have
\begin{align*}
    &\nabla^2_{xy} g(x, y^*(x))+  \nabla^2_{yy} g(x, y^*(x))\nabla y^*(x) +\nabla^2_{xy}\Bar{h}(x,y^*(x))^\top\Bar{\lambda}^*(x)\\
    &+\nabla^2_{yy}\Bar{h}(x,y^*(x))^\top\Bar{\lambda}^*(x)\nabla y^*(x)+ \nabla_y\Bar{h}(x,y^*(x))^\top \nabla\Bar{\lambda}^*(x)=0\\
    &\nabla_x\Bar{h}(x,y^*(x))+\nabla_y\Bar{h}(x,y^*(x))\nabla y^*(x)=0
\end{align*}

Thus, $\nabla y^*(x)$ and $\lambda^*(x)$ satisfy the following equation. 
\begin{align}
&\begin{bmatrix}
\nabla^2_{yy} g(x, y^*(x))+\nabla^2_{yy}\Bar{h}(x,y^*(x))^\top\Bar{\lambda}^*(x) & \nabla_y\Bar{h}(x,y^*(x))^\top \\
\nabla_y\Bar{h}(x,y^*(x)) & 0
\end{bmatrix}
\begin{bmatrix}
\nabla y^*(x) \\
\nabla \Bar{\lambda}^*(x)
\end{bmatrix}
\\
=&
\begin{bmatrix}
-\nabla^2_{xy} g(x, y^*(x))-\nabla^2_{xy}\Bar{h}(x,y^*(x))^\top\Bar{\lambda}^*(x)\\
-\nabla_x\Bar{h}(x,y^*(x))
\end{bmatrix}
\end{align}

Denote
\begin{align}\label{eq: H}
    H=\begin{bmatrix}
\nabla^2_{yy} g(x, y^*(x))+\nabla^2_{yy}\Bar{h}(x,y^*(x))^\top\Bar{\lambda}^*(x) & \nabla_y\Bar{h}(x,y^*(x))^\top \\
\nabla_y\Bar{h}(x,y^*(x)) & 0
\end{bmatrix}
\end{align}

Since $g$ is strongly convex, $h$ is convex, $\nabla^2_{yy} g(x, y^*(x))\succ 0$, $\nabla^2_{yy}[\Bar{h}(x,y^*(x))]_i\succeq 0$. Moreover, since $\Bar{\lambda}^*(x)> 0$, we have $\nabla^2_{yy} g(x, y^*(x))+\nabla^2_{yy}\Bar{h}(x,y^*(x))^\top\Bar{\lambda}^*(x)\succ 0$. Additionally, $\nabla_y\Bar{h}(x,y^*(x))$ has full row rank. Thus, according to  Lemma A.2 in \cite{kornowski2024first}, $H$ is invertible. We can calculate $\nabla y^*(x)$ and $\lambda^*(x)$ with the following equation.
\begin{align}
\begin{bmatrix}
\nabla y^*(x) \\
\nabla \Bar{\lambda}^*(x)
\end{bmatrix}
=
H^{-1}
\begin{bmatrix}
-\nabla^2_{xy} g(x, y^*(x))-\nabla^2_{xy}\Bar{h}(x,y^*(x))^\top\Bar{\lambda}^*(x)\\
-\nabla_x\Bar{h}(x,y^*(x))
\end{bmatrix}\label{eq: nabla y}
\end{align}

According to \cite{ghadimi2018approximation}, we have
\begin{align*}
    \nabla \Phi(x)=\nabla_x f(x,y)+(\nabla y^*(x))^\top \nabla_y f(x,y^*(x)).
\end{align*}

\end{proof}

\subsection*{\Cref{thm: closeness of nabla Phi}}
\emph{When \Cref{assumption: phi}, \ref{assumption: smooth}, \ref{assumption: sc}, \ref{assumption: hessian smooth} hold, and $\delta \leq \mu_g/(2l_{f,1})$, if \Cref{assumption: delta} holds for a given $x$, we have
    \begin{align*}
        \|\nabla \Phi(x)-\nabla \Phi_\delta(x)\|\leq O(\delta).
    \end{align*}}

\begin{proof}

With $\lambda\in \R^{d_h}$, we have the following Lagrangian function
\begin{align*}
    \Lc_\delta(x,y, \lambda)=g_\delta(x,y) + \lambda^\top h(x,y).
\end{align*}

Denote $\lambda_\delta^*(x)$ as the optimal Lagrangian multiplier. We have the following KKT conditions.
\begin{align*}
    &\nabla_y g_\delta(x, y^*_\delta(x))+\nabla_y\Bar{h}(x,y^*_\delta(x))^\top \Bar{\lambda}^*_\delta(x)=0\\
    &\Bar{h}(x,y^*_\delta(x))=0.
\end{align*}

Differentiating the KKT conditions with respect to $\delta$, we have
\begin{align*}
    &\nabla_y f(x, y^*_\delta(x))+\nabla^2_{yy} g_\delta(x, y^*_\delta(x))\frac{d}{d \delta} y^*_\delta(x)+\nabla^2_{yy}\Bar{h}(x,y^*_\delta(x))^\top \Bar{\lambda}^*_\delta(x)\frac{d}{d \delta} y^*_\delta(x)\\
    &+\nabla_y\Bar{h}(x,y^*_\delta(x))^\top \frac{d}{d \delta}\Bar{\lambda}^*_\delta(x)=0\\
    &\nabla_y \Bar{h}(x,y^*_\delta(x))\frac{d}{d \delta} y^*_\delta(x)=0
\end{align*}
We have the following equation:
\begin{align}
&\begin{bmatrix}
\nabla^2_{yy} g_\delta(x, y^*_\delta(x))+\nabla^2_{yy}\Bar{h}(x,y^*_\delta(x))^\top\Bar{\lambda}^*_\delta(x) & \nabla_y\Bar{h}(x,y^*_\delta(x))^\top \\
\nabla_y\Bar{h}(x,y^*_\delta(x)) & 0
\end{bmatrix}
\begin{bmatrix}
\frac{d}{d \delta} y^*_\delta(x) \\
\frac{d}{d \delta} \Bar{\lambda}^*_\delta(x)
\end{bmatrix}\nonumber
\\
=&
\begin{bmatrix}
-\nabla_y f(x, y^*_\delta(x))\\
0
\end{bmatrix}\label{eq: nabla y delta}.
\end{align}

Denote
\begin{align}\label{eq: H delta}
H_\delta=\begin{bmatrix}
\nabla^2_{yy} g_\delta(x, y^*_\delta(x))+\nabla^2_{yy}\Bar{h}(x,y^*_\delta(x))^\top\Bar{\lambda}^*_\delta(x) & \nabla_y\Bar{h}(x,y^*_\delta(x))^\top \\
\nabla_y\Bar{h}(x,y^*_\delta(x)) & 0
\end{bmatrix}.
\end{align}
We can notice that $H_{\delta=0}=H$, where $H$ is defined in \eqref{eq: H}.

Then, we have
\begin{align*}
    &\lim_{\delta\to 0}\frac{[\nabla_x g(x, y^*_\delta(x))+\nabla_x h(x, y^*_\delta(x))\lambda^*_\delta(x)]-[\nabla_x g(x, y^*(x))+\nabla_x h(x, y^*(x))\lambda^*(x)]}{\delta}\\
    =&\begin{bmatrix}
\nabla^2_{xy} g(x, y^*(x))+\nabla^2_{xy}\Bar{h}(x,y^*(x))^\top\Bar{\lambda}^*(x)\\
\nabla_x\Bar{h}(x,y^*(x))
\end{bmatrix}^\top\begin{bmatrix}
\frac{d}{d \delta} y^*_\delta(x) \\
\frac{d}{d \delta} \Bar{\lambda}^*_\delta(x)
\end{bmatrix}\Bigg|_{\delta=0}\\
=&\begin{bmatrix}
\nabla^2_{xy} g(x, y^*(x))+\nabla^2_{xy}\Bar{h}(x,y^*(x))^\top\Bar{\lambda}^*(x)\\
\nabla_x\Bar{h}(x,y^*(x))
\end{bmatrix}^\top H^{-1} \begin{bmatrix}
-\nabla_y f(x, y^*(x))\\
0
\end{bmatrix}\\
=&T^\top H^{-1} \begin{bmatrix}
-\nabla_y f(x, y^*(x))\\
0
\end{bmatrix}\\
\end{align*}
where we denote
\begin{align*}
    T=\begin{bmatrix}
\nabla^2_{xy} g(x, y^*(x))+\nabla^2_{xy}\Bar{h}(x,y^*(x))^\top\Bar{\lambda}^*(x)\\
\nabla_x\Bar{h}(x,y^*(x))
\end{bmatrix}.
\end{align*}
Note that according to \eqref{eq: nabla y}, we have
\begin{align*}
    &\nabla y^*(x)=-\begin{bmatrix}
1 & 0
\end{bmatrix} H^{-1} T\\
&(\nabla y^*(x))^\top \nabla_y f(x,y^*(x))=T^\top H^{-1}\begin{bmatrix}
-\nabla_y f(x, y^*(x)) \\ 0
\end{bmatrix}
\end{align*}
Thus,
\begin{align}\nonumber
    &\lim_{\delta\to 0}\frac{[\nabla_x g(x, y^*_\delta(x))+\nabla_x h(x, y^*_\delta(x))\lambda^*_\delta(x)]-[\nabla_x g(x, y^*(x))+\nabla_x h(x, y^*(x))\lambda^*(x)]}{\delta}\\
    =&(\nabla y^*(x))^\top \nabla_y f(x,y^*(x)) \label{eq: limit}
\end{align}
Note that, according to \Cref{lemma: nabla phi delta}, we have
\begin{align*}
    \nabla \Phi_\delta(x)=&\frac{[\nabla_x g(x, y^*_\delta(x))+\nabla_x h(x, y^*_\delta(x))\lambda^*_\delta(x)]-[\nabla_x g(x, y^*(x))+\nabla_x h(x, y^*(x))\lambda^*(x)]}{\delta}\\
    &+\nabla_x f(x, y^*_\delta(x)).
\end{align*}
Then, we consider to bound $\nabla \Phi_\delta(x)-\nabla \Phi(x)$.
\begin{align*}
    &\nabla \Phi_\delta(x)-\nabla \Phi(x)\\
    =&\nabla_x f(x, y^*_\delta(x))-\nabla_x f(x, y^*(x))\\
    &+\frac{[\nabla_x g(x, y^*_\delta(x))+\nabla_x h(x, y^*_\delta(x))\lambda^*_\delta(x)]-[\nabla_x g(x, y^*(x))+\nabla_x h(x, y^*(x))\lambda^*(x)]}{\delta}\\
    &-(\nabla y^*(x))^\top \nabla_y f(x,y^*(x))\\
    =&\nabla_x f(x, y^*_\delta(x))-\nabla_x f(x, y^*(x))\\
    &+\frac{[\nabla_x g(x, y^*_\delta(x))+\nabla_x h(x, y^*_\delta(x))\lambda^*_\delta(x)]-[\nabla_x g(x, y^*(x))+\nabla_x h(x, y^*(x))\lambda^*(x)]}{\delta}\\
    &-[\nabla^2_{xy} g(x, y^*(x))+\nabla^2_{xy}\Bar{h}(x,y^*(x))^\top\Bar{\lambda}^*(x)]\frac{d}{d \delta} y^*_\delta(x)|_{\delta=0}-\nabla_x\Bar{h}(x,y^*(x))\frac{d}{d \delta} \Bar{\lambda}^*_\delta(x)|_{\delta=0} \numberthis \label{eq: delta error}
\end{align*}
where the last equality is due to \eqref{eq: limit}.

For the first term in \eqref{eq: delta error}, we have
\begin{align*}
    \|\nabla_x f(x, y^*_\delta(x))-\nabla_x f(x, y^*(x))\|\leq l_{f,1}\|y^*_\delta(x)-y^*(x)\|\leq \frac{2\delta l_{f,0}l_{f,1}}{\mu_g}
\end{align*}

For the remaindering terms in \eqref{eq: delta error}, we have
\begin{align*}
    &\frac{\nabla_x g(x, y^*_\delta(x))-\nabla_x g(x, y^*(x))}{\delta}-\nabla^2_{xy} g(x, y^*(x))\frac{d}{d \delta} y^*_\delta(x)|_{\delta=0}\\
    &+\frac{\nabla_x h(x, y^*_\delta(x))\lambda^*_\delta(x)-\nabla_x h(x, y^*(x))\lambda^*(x)}{\delta}-\nabla^2_{xy} h(x,y^*(x)) \frac{d}{d \delta} \lambda^*_\delta(x)|_{\delta=0}\\
    &-\nabla_x\Bar{h}(x,y^*(x))\frac{d}{d \delta} \Bar{\lambda}^*_\delta(x)|_{\delta=0}\\
    =&\frac{1}{\delta}\int_{t=0}^\delta \pth{\nabla^2_{xy} g(x, y^*_t(x))\frac{d}{d t} y^*_t(x)-\nabla^2_{xy} g(x, y^*(x))\frac{d}{d \delta} y^*_\delta(x)|_{\delta=0}}dt\\
    &+\frac{1}{\delta}\int_{t=0}^\delta \nabla^2_{xy}h(x,y^*_t(x))\frac{d}{d t} y^*_t(x)+\nabla_x h(x, y^*_t(x))\frac{d}{d t}\lambda^*_t(x)dt\\
    &-\frac{1}{\delta}\int_{t=0}^\delta\pth{\nabla^2_{xy}h(x,y^*(x))\frac{d}{d s} y^*_s(x)+\nabla_x h(x, y^*(x))\frac{d}{d s}\lambda^*_s(x)}|_{s=0} dt\\
    =&\frac{1}{\delta}\int_{t=0}^\delta \qth{\nabla^2_{xy} g(x, y^*_t(x))-\nabla^2_{xy} g(x, y^*(x))}\frac{d}{d t} y^*_t(x)dt\\
    &+ \frac{1}{\delta}\int_{t=0}^\delta\nabla^2_{xy} g(x, y^*(x))\qth{\frac{d}{d t} y^*_t(x)-\frac{d}{d s} y^*_s(x)|_{s=0}}dt\\
    &+\frac{1}{\delta}\int_{t=0}^\delta \qth{\nabla^2_{xy}h(x,y^*_t(x))-\nabla^2_{xy}h(x,y^*(x))}\frac{d}{d t} y^*_t(x)dt\\
    &+ \frac{1}{\delta}\int_{t=0}^\delta \nabla^2_{xy}h(x,y^*(x))\qth{\frac{d}{d t} y^*_t(x)-\frac{d}{d s} y^*_s(x)|_{s=0}}dt \\
    &+\frac{1}{\delta}\int_{t=0}^\delta \qth{\nabla_x h(x, y^*_t(x))-\nabla_x h(x, y^*(x))}\frac{d}{d t}\lambda^*_t(x)dt\\
    &+ \frac{1}{\delta}\int_{t=0}^\delta\nabla_x h(x, y^*(x))\qth{\frac{d}{d t}\lambda^*_t(x)-\lambda^*_s(x)|_{s=0}}dt \numberthis \label{eq: int}
\end{align*}

For the first term in \eqref{eq: int}, we have
\begin{align*}
    &\nth{\frac{1}{\delta}\int_{t=0}^\delta [\nabla^2_{xy} g(x, y^*_t(x))-\nabla^2_{xy} g(x, y^*(x))]\frac{d}{d t} y^*_t(x)dt}\\
    \leq&\frac{1}{\delta}\int_{t=0}^\delta \|\frac{d}{d t} y^*_t(x)\|\cdot l_{g,2}\|y_t^*(x)-y^*(x)\|\cdot dt\\
    \leq&\frac{1}{\delta}\int_{t=0}^\delta l_{g,2}C_y^2\cdot t\cdot dt\\
    =&\delta l_{g,2} C_y^2/2
\end{align*}
where the last equality is due to \Cref{lemma: smooth of y*}.

Similarly, for the third, fifth terms in \eqref{eq: int}, we have
\begin{align*}
    &\Bigg\|\frac{1}{\delta}\int_{t=0}^\delta \qth{\nabla^2_{xy}h(x,y^*_t(x))-\nabla^2_{xy}h(x,y^*(x))}\frac{d}{d t} y^*_t(x)dt\\
    &+\frac{1}{\delta}\int_{t=0}^\delta \qth{\nabla_x h(x, y^*_t(x))-\nabla_x h(x, y^*(x))}\frac{d}{d t}\lambda^*_t(x)dt\Bigg\|\\
    \leq& \delta (l_{h,2}+l_{h,1}) C_y^2/2
\end{align*}

For the second term in \eqref{eq: int}, we have
\begin{align*}
    &\|\frac{1}{\delta}\int_{t=0}^\delta \nabla^2_{xy} (x, y^*(x)\cdot \pth{\frac{d}{d t} y^*_t(x)-\frac{d}{d s} y^*_s(x)|_{s=0}}dt\| \\
    \leq&\frac{1}{\delta}\|\nabla^2_{xy} (x, y^*(x)\|\cdot\int_{t=0}^{\delta}\int_{s=0}^t\|\frac{d^2}{d s^2}y^*_s(x)\| ds dt\\
    \leq&\frac{1}{\delta}\|\nabla^2_{xy} (x, y^*(x)\|\cdot\max_{s\in[0,\delta]}\|\frac{d^2}{d s^2}y^*_s(x)\|\cdot \delta^2\\
    \leq& \delta l_{g,1}L_y
\end{align*}
where the last equality is due to \Cref{lemma: smooth of y*}.

Similarly, for the fourth, sixth terms in \eqref{eq: int}, we have
\begin{align*}
    &\Bigg\|\frac{1}{\delta}\int_{t=0}^\delta\nabla^2_{xy} g(x, y^*(x))\qth{\frac{d}{d t} y^*_t(x)-\frac{d}{d s} y^*_s(x)|_{s=0}}dt \\
    &+ \frac{1}{\delta}\int_{t=0}^\delta\nabla_x h(x, y^*(x))\qth{\frac{d}{d t}\lambda^*_t(x)-\lambda^*_s(x)|_{s=0}}dt\Bigg\|\\
    \leq& \delta (l_{h,1}+l_{h,0})L_y
\end{align*}

Therefore, 
\begin{align*}
    \|\nabla \Phi_\delta(x)-\nabla \Phi(x)\|\leq O(\delta).
\end{align*}
\end{proof}

\begin{lemma}\label{lemma: smooth of y*}
When \Cref{assumption: phi}, \ref{assumption: smooth}, \ref{assumption: sc}, \ref{assumption: hessian smooth} hold, and $\delta \leq \mu_g/(2l_{f,1})$, if \Cref{assumption: delta} holds for a given $x$, we have
\begin{align*}
    &\nth{\begin{bmatrix}
        \frac{d}{d \delta} y^*_\delta(x) \\
        \frac{d}{d \delta} \Bar{\lambda}^*_\delta(x)
    \end{bmatrix}}\leq C_y\\
    &\nth{\begin{bmatrix}
        \frac{d^2}{d \delta^2} y^*_\delta(x) \\
        \frac{d^2}{d \delta^2} \Bar{\lambda}^*_\delta(x)
    \end{bmatrix}}\leq L_y
\end{align*}
where $C_y, L_y$ are $O(1)$ constants.
\end{lemma}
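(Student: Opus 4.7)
The plan is to read off both derivatives from the KKT block system \eqref{eq: nabla y delta}, namely
\begin{align*}
H_\delta \begin{bmatrix} \tfrac{d}{d\delta} y^*_\delta(x) \\ \tfrac{d}{d\delta} \bar\lambda^*_\delta(x) \end{bmatrix} = \begin{bmatrix} -\nabla_y f(x, y^*_\delta(x)) \\ 0 \end{bmatrix},
\end{align*}
where $H_\delta$ is given in \eqref{eq: H delta}. Everything reduces to two things: a uniform $O(1)$ bound on $\|H_\delta^{-1}\|$ on the interval $t\in[0,\delta]$, and a Lipschitz-in-$\delta$ bound on the right-hand side and on $H_\delta$.

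First I would establish the uniform invertibility of $H_t$ for $t\in[0,\delta]$. The top-left block is $\nabla^2_{yy} g_t(x,y^*_t(x)) + \nabla^2_{yy}\bar h(x,y^*_t(x))^\top \bar\lambda^*_t(x)$, which is at least $(\mu_g - \delta l_{f,1})I \succeq (\mu_g/2) I$-positive definite since $\delta\leq \mu_g/(2l_{f,1})$ and each $\nabla^2_{yy}[\bar h]_i\succeq 0$ with $[\bar\lambda^*_t(x)]_i\geq 0$ by \Cref{assumption: delta}(1). The off-diagonal block $\nabla_y\bar h(x,y^*_t(x))$ has singular values bounded in $[s_{\min}, s_{\max}]$ by \Cref{assumption: delta}(2). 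Therefore a standard Schur-complement argument (of the type used in Lemma A.2 of \cite{kornowski2024first}, cited in the proof of \Cref{lemma: existence of nabla Phi}) yields $\|H_t^{-1}\|\leq C_H$ for an absolute constant $C_H = O(1)$, uniformly in $t\in[0,\delta]$.

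Given this, the first-derivative bound is immediate: since $\|\nabla_y f(x,y^*_\delta(x))\|\leq l_{f,0}$,
\begin{align*}
\Bigl\| \begin{bmatrix} \tfrac{d}{d\delta} y^*_\delta(x) \\ \tfrac{d}{d\delta} \bar\lambda^*_\delta(x) \end{bmatrix} \Bigr\| \;\leq\; \|H_\delta^{-1}\|\cdot l_{f,0} \;\leq\; C_H l_{f,0} \;=:\; C_y,
\end{align*}
which is the desired $O(1)$ constant.

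For the second-derivative bound, I would argue via Lipschitz-in-$\delta$ stability of the above expression rather than introducing third derivatives of $g$ and $h$ (which are not assumed). Using the matrix identity $H_{\delta_1}^{-1}-H_{\delta_2}^{-1} = H_{\delta_1}^{-1}(H_{\delta_2}-H_{\delta_1})H_{\delta_2}^{-1}$, one has
\begin{align*}
\Bigl\| \tfrac{d}{d\delta} y^*_\delta\Big|_{\delta_1} - \tfrac{d}{d\delta} y^*_\delta\Big|_{\delta_2}\Bigr\| \leq \|H_{\delta_1}^{-1}\|\,\|H_{\delta_2}^{-1}\|\,\|H_{\delta_1}-H_{\delta_2}\|\cdot l_{f,0} + \|H_{\delta_1}^{-1}\|\cdot l_{f,1}\,\|y^*_{\delta_1}-y^*_{\delta_2}\|,
\end{align*}
and similarly for the $\bar\lambda$-component. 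The entries of $H_t$ are made up of $\nabla^2_{yy} g_t$, $\nabla^2_{yy}\bar h$, and $\nabla_y\bar h$ evaluated at $(x,y^*_t(x))$, together with a linear combination weighted by $\bar\lambda^*_t(x)$; using \Cref{assumption: smooth}, \Cref{assumption: hessian smooth}, the bound $\|\bar\lambda^*_t(x)\|\leq \Lambda$, and the already-proved Lipschitz estimates $\|y^*_{\delta_1}-y^*_{\delta_2}\|\leq C_y|\delta_1-\delta_2|$ and $\|\bar\lambda^*_{\delta_1}-\bar\lambda^*_{\delta_2}\|\leq C_y|\delta_1-\delta_2|$, one gets $\|H_{\delta_1}-H_{\delta_2}\|\leq C_{H,1}|\delta_1-\delta_2|$ for an absolute constant $C_{H,1}=O(1)$. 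Combining everything shows that $\tfrac{d}{d\delta} y^*_\delta(x)$ and $\tfrac{d}{d\delta} \bar\lambda^*_\delta(x)$ are $L_y$-Lipschitz in $\delta$ for some $L_y=O(1)$, which is the stated bound on the second derivative (interpreting the second derivative as the Lipschitz constant of the first).

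The main obstacle will be the uniform invertibility step: one needs the constant appearing in the Schur-complement bound to depend only on $\mu_g$, $l_{f,1}$, $l_{g,1}$, $s_{\min}$, $s_{\max}$, and $\Lambda$, and not on $\delta$ or $t$. Assumption \ref{assumption: delta}, applied uniformly on $t\in[0,\delta]$, is precisely what enables this; once it is in hand, both bounds drop out by purely mechanical manipulations.
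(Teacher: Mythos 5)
Your proposal is correct and follows essentially the same route as the paper: both read the derivative off the differentiated KKT system $H_\delta\,[\tfrac{d}{d\delta}y^*_\delta;\tfrac{d}{d\delta}\bar\lambda^*_\delta]=p_\delta$, bound $\|H_\delta^{-1}\|\leq C_H$ via the Schur-complement block inverse (the paper's \Cref{lemma: CH}), get $C_y=C_H l_{f,0}$ immediately, and then obtain the second bound by showing $H_\delta$ and $p_\delta$ are $O(1)$-Lipschitz in $\delta$ and applying the resolvent identity $H_{\delta_1}^{-1}-H_{\delta_2}^{-1}=H_{\delta_1}^{-1}(H_{\delta_2}-H_{\delta_1})H_{\delta_2}^{-1}$, interpreting the result as Lipschitzness of the first derivative. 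Even your flagged subtlety (avoiding third derivatives of $g,h$ by treating $L_y$ as a Lipschitz constant rather than differentiating twice) matches what the paper implicitly does.
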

\begin{proof}
According to \eqref{eq: nabla y delta}, we have
    \begin{align*}
        \begin{bmatrix}
        \frac{d}{d \delta} y^*_\delta(x) \\
        \frac{d}{d \delta} \Bar{\lambda}^*_\delta(x)
        \end{bmatrix}=H_\delta^{-1}p_\delta,
    \end{align*}
where 
\begin{align*}
    H_\delta=\begin{bmatrix}
\nabla^2_{yy} g_\delta(x, y^*_\delta(x))+\nabla^2_{yy}\Bar{h}(x,y^*_\delta(x))^\top\Bar{\lambda}^*_\delta(x) & \nabla_y\Bar{h}(x,y^*_\delta(x))^\top \\
\nabla_y\Bar{h}(x,y^*_\delta(x)) & 0
\end{bmatrix},
\end{align*}
\begin{align*}
    p_\delta=\begin{bmatrix}
-\nabla_y f(x, y^*_\delta(x))\\
0
\end{bmatrix}.
\end{align*}

According to \Cref{lemma: CH}, we have
\begin{align*}
    \nth{\begin{bmatrix}
        \frac{d}{d \delta} y^*_\delta(x) \\
        \frac{d}{d \delta} \Bar{\lambda}^*_\delta(x)
    \end{bmatrix}}\leq\nth{H_\delta^{-1}p_\delta}\leq C_Hl_{f,0}=C_y.
\end{align*}

Denote
\begin{align*}
    &\nth{H_{\delta_2}-H_{\delta_1}}=\nth{\begin{bmatrix}
B & D^T \\
D & 0
\end{bmatrix}}
\end{align*}

where 
\begin{align*}
    B=&\nabla^2_{yy} g_{\delta_2}(x, y^*_{\delta_2}(x))+\nabla^2_{yy}\Bar{h}(x,y^*_{\delta_2}(x))^\top\Bar{\lambda}^*_{\delta_2}(x)-\nabla^2_{yy} g_{\delta_1}(x, y^*_{\delta_1}(x))-\nabla^2_{yy}\Bar{h}(x,y^*_{\delta_1}(x))^\top\Bar{\lambda}^*_{\delta_1}(x)\\
    =&\delta_2 \nabla^2_{yy} f(x, y^*_{\delta_2}(x))-\delta_1 \nabla^2_{yy} f(x, y^*_{\delta_1}(x))+\nabla^2_{yy} g(x, y^*_{\delta_2}(x))-\nabla^2_{yy} g(x, y^*_{\delta_1}(x))\\
    &+\nabla^2_{yy}\Bar{h}(x,y^*_{\delta_2}(x))^\top\Bar{\lambda}^*_{\delta_2}(x)-\nabla^2_{yy}\Bar{h}(x,y^*_{\delta_1}(x))^\top\Bar{\lambda}^*_{\delta_1}(x)\\
    =&\delta_1(\nabla^2_{yy} f(x, y^*_{\delta_2}(x))-\nabla^2_{yy} f(x, y^*_{\delta_1}(x)))+\nabla^2_{yy} f(x, y^*_{\delta_2}(x))(\delta_2-\delta_1)+\nabla^2_{yy} g(x, y^*_{\delta_2}(x))\\
    &-\nabla^2_{yy} g(x, y^*_{\delta_1}(x))+[\nabla^2_{yy}\Bar{h}(x,y^*_{\delta_2}(x))-\nabla^2_{yy}\Bar{h}(x,y^*_{\delta_1}(x))]^\top\Bar{\lambda}^*_{\delta_2}(x)\\
    &+\nabla^2_{yy}\Bar{h}(x,y^*_{\delta_1}(x))^\top[\Bar{\lambda}^*_{\delta_2}(x)-\Bar{\lambda}^*_{\delta_1}(x)].
\end{align*}

Thus
\begin{align*}
    \|B\|\leq \pth{\frac{2\mu_gl_{f,2}}{l_{f,1}}C_y+l_{f,1}+l_{g,2}C_y+\Lambda l_{h,2}C_y+l_{f,1}C_y}|\delta_2-\delta_1|
\end{align*}

\begin{align*}
    \|D\|=\|\nabla_y\Bar{h}(x,y^*_{\delta_2}(x))-\nabla_y\Bar{h}(x,y^*_{\delta_1}(x))\|\leq l_{h,1}(y^*_{\delta_2}(x)-y^*_{\delta_1}(x))\leq l_{h,1}C_y |\delta_1-\delta_2|
\end{align*}

Therefore, we have
\begin{align*}
&\nth{H_{\delta_2}-H_{\delta_1}}\leq M_H|\delta_1-\delta_2|
\end{align*}

Moreover, we have
\begin{align*}
    &\left\|\begin{bmatrix}
    \frac{d}{d \delta} y^*_{\delta_1}(x) \\
    \frac{d}{d \delta} \Bar{\lambda}^*_{\delta_1}(x)
    \end{bmatrix}-\begin{bmatrix}
    \frac{d}{d \delta} y^*_{\delta_2}(x) \\
    \frac{d}{d \delta} \Bar{\lambda}^*_{\delta_2}(x)
    \end{bmatrix}\right\|\\
    =& \|H_{\delta_1}^{-1}p_{\delta_1}-H_{\delta_2}^{-1}p_{\delta_2}\|\\
    =&\|(H_{\delta_1}^{-1}-H_{\delta_2}^{-1})p_{\delta_1}+H_{\delta_2}^{-1}(p_{\delta_1}-p_{\delta_2})\|\\
    \leq&\|(H_{\delta_1}^{-1}(H_{\delta_2}-H_{\delta_1})H_{\delta_2}^{-1})\|\|p_{\delta_1}\|+\|H_{\delta_2}^{-1}\|\|p_{\delta_1}-p_{\delta_2}\|\\
    \leq& \pth{C_H^2l_{f,0}M_H^2+C_Hl_{f,1}}|\delta_1-\delta_2|
    =L_y|\delta_1-\delta_2|
\end{align*}
where $L_y=\pth{C_H^2l_{f,0}M_H^2+C_Hl_{f,1}}$.

\end{proof}

\begin{lemma}\label{lemma: CH}
For $H_{\delta}$ defined in \eqref{eq: H delta}, we have
    \begin{align*}
        \|H_{\delta}^{-1}\|\leq C_H,
    \end{align*}
where $C_H$ is an $O(1)$ constant depending on $\mu_g, l_{g,1}, l_{f,1}, d_h, l_{h,1}, s_{\min}, s_{\max}, \Lambda$.
\end{lemma}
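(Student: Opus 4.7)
The plan is to exploit the saddle-point block structure of $H_\delta$ and bound the inverse through Schur complement inversion. Write
\[
H_\delta = \begin{pmatrix} G & D^\top \\ D & 0 \end{pmatrix}, \qquad G := \nabla^2_{yy} g_\delta(x,y^*_\delta(x)) + \nabla^2_{yy}\bar h(x,y^*_\delta(x))^\top \bar\lambda^*_\delta(x), \quad D := \nabla_y \bar h(x,y^*_\delta(x)).
\]
The first step is to control the $(1,1)$ block $G$. Since $g$ is $\mu_g$-strongly convex in $y$ and $f$ is $l_{f,1}$-smooth, the assumption $\delta \le \mu_g/(2l_{f,1})$ gives $\nabla^2_{yy} g_\delta \succeq (\mu_g/2)I$. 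Because each $h_i$ is convex in $y$ and $\bar\lambda^*_\delta(x) \ge 0$ (componentwise), the term $\sum_{i\in\Ic}[\bar\lambda^*_\delta]_i \nabla^2_{yy}\bar h_i \succeq 0$, hence $G \succeq (\mu_g/2) I$ and $\|G^{-1}\| \le 2/\mu_g$. For the upper bound, $\|G\| \le l_{g,1} + \delta l_{f,1} + \|\bar\lambda^*_\delta\|_1 l_{h,1} \le l_{g,1} + \mu_g/2 + \sqrt{d_h}\,\Lambda\, l_{h,1}$, an $O(1)$ quantity. Assumption~\ref{assumption: delta} directly provides $\|D\| \le s_{\max}$ and $\sigma_{\min}(D) \ge s_{\min} > 0$.

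The second step is to invert $H_\delta$ via the standard block-decomposition. Given any right-hand side $(p,q)$, solving $H_\delta (u,v) = (p,q)$ yields $u = G^{-1}(p - D^\top v)$ and $v = S^{-1}(D G^{-1} p - q)$, where $S := DG^{-1}D^\top$ is the (negative) Schur complement. Because $G \succeq (\mu_g/2) I$ we have $G^{-1} \succeq \|G\|^{-1} I$, and therefore
\[
\lambda_{\min}(S) \;\ge\; \|G\|^{-1}\sigma_{\min}^2(D) \;\ge\; \frac{s_{\min}^2}{l_{g,1} + \mu_g/2 + \sqrt{d_h}\,\Lambda\, l_{h,1}},
\]
giving an $O(1)$ bound on $\|S^{-1}\|$ in terms of $\mu_g, l_{g,1}, l_{f,1}, d_h, l_{h,1}, s_{\min}, \Lambda$. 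Plugging everything into the explicit formulas for $u$ and $v$, we obtain
\[
\|v\| \;\le\; \|S^{-1}\|\bigl(\|D\|\|G^{-1}\|\|p\| + \|q\|\bigr), \qquad \|u\| \;\le\; \|G^{-1}\|\|p\| + \|G^{-1}\|\|D\|\|v\|,
\]
from which the operator-norm bound $\|H_\delta^{-1}\| \le C_H$ follows after collecting constants, with $C_H$ depending polynomially on $\mu_g^{-1}, l_{g,1}, l_{f,1}, d_h, l_{h,1}, s_{\min}^{-1}, s_{\max}, \Lambda$, exactly the dependence claimed.

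I do not anticipate a serious obstacle here; everything reduces to tracking constants through standard saddle-point inverse formulas. The one subtle point to double-check is the uniform invertibility: the denominator $\sigma_{\min}^2(D)/\|G\|$ must stay bounded away from zero, but this is precisely guaranteed by the hypotheses $\sigma_{\min}(\nabla_y \bar h) \ge s_{\min}$, $\|\bar\lambda^*_\delta\| \le \Lambda$, and $\delta \le \mu_g/(2l_{f,1})$ from Assumption~\ref{assumption: delta} and the smoothness hypotheses. Since each of these constants is $O(1)$ and independent of $\delta$, the resulting $C_H$ is likewise an $O(1)$ constant, independent of $\delta$, as required for the downstream use of this lemma in \Cref{lemma: smooth of y*}.
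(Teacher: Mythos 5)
Your proof is correct and follows essentially the same route as the paper: decompose $H_\delta$ into the saddle-point blocks, bound the $(1,1)$ block above and below via strong convexity of $g_\delta$, convexity of $h$, and the multiplier bound from \Cref{assumption: delta}, and invert through the Schur complement $DG^{-1}D^\top$, which is exactly the term $(CA^{-1}C^\top)^{-1}$ appearing in the paper's explicit block-inverse formula. If anything, your write-up is slightly more careful than the paper's: where the paper informally writes $\|C^{-1}\|\le 1/s_{\min}$ for a rectangular Jacobian and leaves the bound on the Schur-complement inverse implicit, you derive $\lambda_{\min}(DG^{-1}D^\top)\ge \sigma_{\min}^2(D)/\|G\|$ explicitly, which is the step that actually makes the constant-tracking rigorous.
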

\begin{proof}

Denote $A=\nabla^2_{yy} g_\delta(x, y^*_\delta(x))+\nabla^2_{yy}\Bar{h}(x,y^*_\delta(x))^\top\Bar{\lambda}^*_\delta(x)$, $C=\nabla_y\Bar{h}(x,y^*_\delta(x))$. We have

    \begin{align*}
        H_{\delta_1}^{-1} = \begin{bmatrix}
        A^{-1} + A^{-1} C^\top (C A^{-1} C^\top)^{-1} C A^{-1} & -A^{-1} C^\top (C A^{-1} C^\top)^{-1} \\
        -(C A^{-1} C^\top)^{-1} C A^{-1} & (C A^{-1} C^\top)^{-1}
        \end{bmatrix}.
    \end{align*}

According to \Cref{assumption: delta}, we know that $\|\lambda^*_\delta(x)\|\leq \Lambda$. Thus, $0\leq [\lambda^*_\delta(x)]_i\leq \Lambda$. We have
\begin{align*}
    &\|A^{-1}\|\leq \frac{2}{\mu_g}\\
    &\|A\|\leq L_g+d_h l_{h,1} \Lambda\\
    &\|C\|\leq s_{\max}\\
    &\|C^{-1}\|\leq \frac{1}{s_{\min}}
\end{align*}

Denote 
\begin{align*}
    H_{\delta_1}^{-1} = \begin{bmatrix}
        B & D^T\\
        D & E
        \end{bmatrix}.
\end{align*}
we have $\|B\|\leq C_B, \|D\|\leq C_D, \|E\|\leq C_E$ and so that $\|H_{\delta_1}^{-1}\|\leq C_H$, where $C_B, C_D, C_E, C_H$ are $O(1)$ constants depending on $\mu_g, l_{g,1}, l_{f,1}, d_h, l_{h,1}, s_{\min}, s_{\max}, \Lambda$.

\end{proof}

\section{Proofs of \Cref{thm: Ay} and \Cref{coro: Ay}}\label{app: Ay}

In this section, we provide proofs for \Cref{thm: Ay} and \Cref{coro: Ay}. We first introduce the additional notations and lemmas that will be used in this section.
\subsection*{Notations}
\begin{align*}
    &K(x,y',z',u,v)=\phi_\delta(x, y, z)+u^\top(A'y'-b)- v^\top(A'z'-b)+\frac{\rho_1}{2}\|A'y'-b\|^2-\frac{\rho_2}{2}\|A'z'-b\|^2\\
    &L_K=l_\delta+2 l_{g,1}+\max\{\rho_1, \rho_2\}\sigma_{\max}^2(A')\\
    &\mu_y = \min\{\mu_g-l_\delta-\rho_1\sigma_{\max}^2(A), \frac{\rho_1}{2}\}\\
    &\mu_z = \min\{\mu_g-\rho_2\sigma_{\max}^2(A), \frac{\rho_2}{2}\}\\
    &y^*_\delta(x)=\min_{y \in \Yc(x)} g_{\delta}(x,y)\\
    &z^*(x)=\argmin_{z\in \Yc(x)} g(x,z)\\
    &y'^*_\delta(x,u)=\argmin_{y'\in \Pc_y} K(x,y',z',u,v)\\
    &z'^*(x,v)=\argmax_{z'\in \Pc_y} K(x,y',z',u,v)\\
    &[y^*_\delta(x,u)^\top, \alpha^*_\delta(x,u)^\top]^\top=y'^*_\delta(x,u)\\
    &[z^*(x,v)^\top, \beta^*(x,v)^\top]^\top=z'^*(x,v)\\
    &d(x, z', u, v)=K(x, y'^*_\delta(x,u), z', u,v)\\
    &q(x, v)=\phi_\delta(x, y^*_\delta(x), z^*(x, v))-v^\top(A'z'^*(x, v)-b)-\frac{\rho_2}{2}\|A'z'^*(x, v)-b\|^2\\
    &V_t=\frac{1}{4}K(x_t, y'_t, z'_t, u_t, v_t)+2q(x_t,v_t)-d(x_t,z'_t, u_t, v_t)
\end{align*}

\begin{lemma}\label{lemma: sc of y z}
   When $\delta\leq \mu_g/(2l_{f,1})$, $0\leq \rho_1\leq \frac{\mu_g-\delta l_{f,1}}{\sigma_{\max}^2(A)}$ and $0\leq \rho_2\leq \frac{\mu_g}{\sigma_{\max}^2(A)}$, $K(x,y',z',u,v)$ is $\mu_y$-strongly convex w.r.t. $y'$, $\mu_z$-strongly concave w.r.t. $z'$, and $L_K$-smooth w.r.t. $x, y', z'$.
\end{lemma}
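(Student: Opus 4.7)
The plan is to treat the three claims of the lemma separately, invoking \Cref{lemma: sc} on the augmented $y'$ and $z'$ blocks of $K$ for the strong convexity and concavity claims, and estimating Lipschitz constants block-by-block for the smoothness claim.

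First, I would isolate the $y'$-dependent portion of $K$: holding $(x,z',u,v)$ fixed, the terms of $K$ that depend on $y'=(y^\top,\alpha^\top)^\top$ are exactly
\begin{align*}
g_\delta(x,y) + u^\top(Ay-\alpha-b) + \tfrac{\rho_1}{2}\|Ay-\alpha-b\|^2,
\end{align*}
which matches the template of \Cref{lemma: sc} with $f(\cdot)=g_\delta(x,\cdot)$, matrix $A$, penalty $\rho=\rho_1$, and $y'$ in the role of $x'$. By \Cref{assumption: sc} and \Cref{assumption: smooth}, $g_\delta(x,\cdot)$ is $(\mu_g-\delta l_{f,1})$-strongly convex; since $\delta\leq\mu_g/(2l_{f,1})$ implies $\delta l_{f,1}\leq l_\delta$, it is also $(\mu_g-l_\delta)$-strongly convex. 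The hypothesis $\rho_1\leq(\mu_g-\delta l_{f,1})/\sigma_{\max}^2(A)$ meets the lemma's requirement, so the strong-convexity modulus reads off as $\mu_y=\min\{\mu_g-l_\delta-\rho_1\sigma_{\max}^2(A),\rho_1/2\}$.

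Second, for strong concavity in $z'$, I would apply the same template to $-K$: its $z'$-dependent part is
\begin{align*}
g(x,z) + v^\top(Az-\beta-b) + \tfrac{\rho_2}{2}\|Az-\beta-b\|^2
\end{align*}
up to constants. Since $g(x,\cdot)$ is $\mu_g$-strongly convex and $\rho_2\leq\mu_g/\sigma_{\max}^2(A)$, another invocation of \Cref{lemma: sc} yields $\mu_z$-strong convexity of $-K$ in $z'$, i.e., $\mu_z$-strong concavity of $K$ in $z'$.

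Third, for smoothness in $(x,y',z')$, I would decompose $K$ as a sum of $\phi_\delta(x,y,z)$, the affine terms $u^\top(A'y'-b)-v^\top(A'z'-b)$, and the two quadratic penalties $\tfrac{\rho_1}{2}\|A'y'-b\|^2$ and $-\tfrac{\rho_2}{2}\|A'z'-b\|^2$. The affine terms contribute zero to the Lipschitz constant of the gradient in $(x,y',z')$; $\phi_\delta$ is $L_\phi=l_\delta+2l_{g,1}$-smooth by the notation summary (using $\delta l_{f,1}\leq l_\delta$ and \Cref{assumption: smooth}); the two quadratics have constant Hessians of spectral norm at most $\rho_1\sigma_{\max}^2(A')$ and $\rho_2\sigma_{\max}^2(A')$ respectively, with no cross terms in $x$ because in this section $h(y)=Ay-b$ does not depend on $x$. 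Summing these contributions gives $L_K=l_\delta+2l_{g,1}+\max\{\rho_1,\rho_2\}\sigma_{\max}^2(A')$, matching the claimed constant. The only non-routine step is the reduction to \Cref{lemma: sc}; the remaining work is bookkeeping. A minor subtlety worth flagging is that $\mu_y$ is stated in terms of $\mu_g-l_\delta$ rather than the sharper $\mu_g-\delta l_{f,1}$, so I would note explicitly that the former is a valid weaker lower bound under the standing condition $\delta\leq\mu_g/(2l_{f,1})$, which is enough to complete the argument; no step appears to pose a genuine obstacle.
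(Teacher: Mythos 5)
Your proposal is correct and follows essentially the same route as the paper: the paper's proof likewise invokes \Cref{lemma: sc} for the $\mu_y$-strong convexity in $y'$ and $\mu_z$-strong concavity in $z'$, and then verifies $L_K$-smoothness by writing out the gradients of $K$ in $(x,y',z')$. Your explicit handling of the modulus subtlety (applying \Cref{lemma: sc} with the sharper constant $\mu_g-\delta l_{f,1}$ and noting that $\mu_y$, stated via $\mu_g-l_\delta$, is a valid weaker bound) is a point the paper glosses over, but it is the same argument.
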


\begin{proof}
    According to \Cref{lemma: sc}, we know that $K(x,y',z',u,v)$ is $\mu_y$-strongly convex w.r.t. $y'$, $\mu_z$-strongly concave w.r.t. $z'$. Moreover
    \begin{align*}
        &\nabla_x K(x,y',z',u,v)=\nabla_x \phi_\delta(x,y,z)\\
        &\nabla_y K(x,y',z',u,v)=\nabla_y \phi_\delta(x,y,z)+ A^\top u+\rho_1 A^\top(A'y'-b)\\
        &\nabla_z K(x,y',z',u,v)=\nabla_z \phi_\delta(x,y,z)- A^\top v-\rho_2 A^\top(A'z'-b)
    \end{align*}
    Thus, $K(x,y',z',u,v)$ is $L_K$-smooth w.r.t. $x, y', z'$.
\end{proof}

\begin{lemma}\label{lemma: error bounds}
When $\delta\leq \mu_g/(2l_{f,1})$, $0\leq \rho_1\leq \frac{\mu_g-\delta l_{f,1}}{\sigma_{\max}^2(A)}$ and $0\leq \rho_2\leq \frac{\mu_g}{\sigma_{\max}^2(A)}$, $\eta_y, \eta_z\leq 1/L_K$, we have
    \begin{align}
        &\|y'^*_\delta(x,u_1)-y'^*_\delta(x,u_2)\|\leq \sigma_{yu}\|u_1-u_2\| \label{sigma yu}\\
        &\|y'^*_\delta(x_1,u)-y'^*_\delta(x_2, u)\|\leq \sigma_{yx}\|x_1-x_2\| \label{sigma yx}\\
        &\|z'^*(x,v_1)-z'^*(x, v_2)\|\leq \sigma_{zv}\|v_1-v_2\| \label{sigma zv}\\
        &\|z'^*(x_1,v)-z'^*(x_2, v)\|\leq \sigma_{zx}\|x_1-x_2\| \label{sigma zx}\\        &\|y^*_\delta(x_1)-y^*_\delta(x_2)\|\leq \sigma_{ys}\|x_1-x_2\|\label{sigma ys}\\
        &\|z^*(x_1)-z^*(x_2)\|\leq \sigma_{zs}\|x_1-x_2\| \label{sigma zs}\\
        &\|y'^*_\delta(x, u)-y'^*_\delta(x)\|\leq \sigma_y\|A'y'^*_\delta(x,u)-b\| \label{sigma y}\\
        &\|z'^*(x, v)-z'^*(x)\|\leq \sigma_z\|A'z'^*(x,v)-b\| \label{sigma z}\\
        &\|y'^*_\delta(x,u)-y'\|\leq \sigma_{ye}\|\nabla_y K(x, y', z', u, v)\|+\sigma_{\alpha}\|\alpha-\Pi_-(\alpha-\eta_y \nabla_\alpha K(x, y', z', u, v))\| \label{sigma ye}\\
        &\|z'^*(x,v)-z'\|\leq \sigma_{ze}\|\nabla_z K(x, y', z', u, v)\|+\sigma_{\beta}\|\beta-\Pi_-(\beta+\eta_z \nabla_\beta K(x, y', z', u, v))\| \label{sigma ze}\\
        &\|y'^*_\delta(x_t,u_{t+1})-y'_t\|\leq \frac{2}{\mu_y\eta_y}\|y'_{t+1}-y'_t\| \label{error bound yt}\\
        &\|z'^*(x_t,v_{t+1})-z'_t\|\leq \frac{2}{\mu_z\eta_z}\|z'_{t+1}-z'_t\|\label{error bound zt}
    \end{align}
where $\sigma_{yu}=\frac{\sigma_{\max}(A')}{\mu_y}$, $\sigma_{yx}=\frac{L_K}{\mu_y}$, $\sigma_{zv}=\frac{\sigma_{\max}(A')}{\mu_z}$, $\sigma_{zx}=\frac{L_K}{\mu_z}$, $\sigma_{ye}=\frac{2}{\mu_y}$, $\sigma_{ze}=\frac{2}{\mu_z}$, 
$\sigma_{\alpha}=\frac{2}{\mu_y\eta_y}$, $\sigma_{\beta}=\frac{2}{\mu_z\eta_z}$, $\sigma_{ys}=\frac{2L_K}{\mu_g}$, $\sigma_{zs}=\frac{L_K}{\mu_g}$, 
\begin{align*}
        &\sigma_y=\frac{\sqrt{2}(\Bar{\theta}L_K^2+1)}{\mu_y}\\
        &\sigma_z=\frac{\sqrt{2}(\Bar{\theta}L_K^2+1)}{\mu_z}
    \end{align*}

\begin{align*}
   M =
    \begin{pmatrix}
        A'^\top & G^\top \\
        0 & I
    \end{pmatrix}
\end{align*}

\begin{align*}
   G =
    \begin{pmatrix}
        0_{d_y\times d_h} & 0 \\
        0 & I_{d_h}
    \end{pmatrix}
\end{align*}

\begin{align*}
    \Bar{\theta}=\max_{\Bar{M}\in \mathcal{B}(M)}\sigma^2_{\max}(\Bar{M})/\sigma^4_{\min}(\Bar{M})
\end{align*}
$\Bar{M}$ is the set of all submatrices of $M$ with full row rank.
\end{lemma}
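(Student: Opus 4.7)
\textbf{Proof plan for Lemma \ref{lemma: error bounds}.} Under the stated parameter conditions, Lemma \ref{lemma: sc of y z} gives that $K(x,y',z',u,v)$ is $\mu_y$-strongly convex in $y'$, $\mu_z$-strongly concave in $z'$, and jointly $L_K$-smooth in $(x,y',z')$. The plan is to derive each of the ten bounds from an appropriate application of one of three pre-existing tools: the parametric Lipschitz estimate of Lemma \ref{lemma: continuous}, the linear-constraint duality estimate of Lemma \ref{lemma: linear}, and the projected-gradient error bound of Lemma \ref{lemma: eb}. Most of the work is bookkeeping; the main care is in checking the exact Lipschitz constants match the $\sigma$'s in the statement.

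For \eqref{sigma yu}--\eqref{sigma zx}, apply Lemma \ref{lemma: continuous} with $f = K(x,\cdot,z',u,v)$ viewed as a function of $y'$. Since $K$ is $\mu_y$-strongly convex in $y'$, and since $\nabla_{y'}K$ depends on $u$ only through the term $A'^\top u$ (giving mixed partial with norm $\sigma_{\max}(A')$) and on $x$ with Lipschitz constant $L_K$, the lemma yields $\sigma_{yu}$ and $\sigma_{yx}$ directly. The $z'$ counterparts follow by symmetry. For \eqref{sigma ys}--\eqref{sigma zs}, note that in this Appendix the constraint $\mathcal{Y}=\{y:Ay\le b\}$ is independent of $x$, so the same lemma applied to $g_\delta(x,y)$ (strongly convex in $y$ with parameter at least $\mu_g/2$) and $g(x,z)$ yields the claimed constants, which are loosened to the stated $\sigma_{ys}=2L_K/\mu_g$ and $\sigma_{zs}=L_K/\mu_g$ for uniformity in later analysis.

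For \eqref{sigma y} and \eqref{sigma z}, I invoke Lemma \ref{lemma: linear}. Identify $f\leftarrow K(x,\cdot,z',u,v)$, the polyhedron $\mathcal{P}\leftarrow \mathcal{P}_y$ (encoded by the matrix $C$ and $G$ acting only on the $\alpha$ block), and the equality constraint $A'y'-b=r$. Since $y'^*_\delta(x,u)$ solves the Lagrangian-relaxed problem and $y'^*_\delta(x)$ solves the equality-constrained problem, Lemma \ref{lemma: linear} gives $\|y'^*_\delta(x,u)-y'^*_\delta(x)\|\le \sigma_y\|A'y'^*_\delta(x,u)-b\|$ with the stated expression for $\sigma_y$; the matrix $M$ in the statement arises from stacking $A'^\top$ with the polyhedral rows. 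The same identification with the max-problem over $z'$ gives $\sigma_z$.

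For \eqref{sigma ye}--\eqref{sigma ze}, the trick is to combine Lemma \ref{lemma: eb} with the product structure of $\mathcal{P}_y=\mathbb{R}^{d_y}\times\mathbb{R}^{d_h}_-$. Apply Lemma \ref{lemma: eb} to the projected gradient step $y'^+ = \Pi_{\mathcal{P}_y}(y'-\eta_y\nabla_{y'}K)$ to obtain $\|y'-y'^*_\delta(x,u)\|\le \tfrac{2}{\mu_y\eta_y}\|y'-y'^+\|$. Because projection onto $\mathcal{P}_y$ is the identity on the $y$-coordinates and $\Pi_-$ on the $\alpha$-coordinates, $\|y'-y'^+\|^2 = \eta_y^2\|\nabla_y K\|^2 + \|\alpha-\Pi_-(\alpha-\eta_y\nabla_\alpha K)\|^2$. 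Splitting the square root by subadditivity gives exactly the $\sigma_{ye}, \sigma_\alpha$ form; the same step on $z'$ yields $\sigma_{ze},\sigma_\beta$. Finally, bounds \eqref{error bound yt}--\eqref{error bound zt} are a direct application of Lemma \ref{lemma: eb} to the single gradient step at iteration $t$ against the $\mu_y$-strongly convex (resp.\ $\mu_z$-strongly concave) function $K(x_t,\cdot,z'_t,u_{t+1},v_{t+1})$ whose constrained minimizer/maximizer is $y'^*_\delta(x_t,u_{t+1})$, resp.\ $z'^*(x_t,v_{t+1})$. The only non-routine step is the application of Lemma \ref{lemma: linear}, where one must verify that the submatrix structure in the definition of $\bar\theta$ is consistent with the $\mathcal{P}_y$ polyhedron; this is the step I would write out most carefully.
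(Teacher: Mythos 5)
Your proposal is correct and takes essentially the same route as the paper: the paper's own proof consists precisely of attributing \eqref{sigma yu}--\eqref{sigma zs} to \Cref{lemma: continuous}, \eqref{sigma ye}--\eqref{error bound zt} to \Cref{lemma: eb}, and \eqref{sigma y}--\eqref{sigma z} to \Cref{lemma: linear}, which is exactly your mapping. The details you fill in (the Lipschitz-in-$u$ constant $\sigma_{\max}(A')$ for $\nabla_{y'}K$, the coordinate-wise split of the projection onto $\Pc_y=\R^{d_y}\times\R^{d_h}_-$ that produces the pairs $(\sigma_{ye},\sigma_{\alpha})$ and $(\sigma_{ze},\sigma_{\beta})$, and the identification of $y'^*_\delta(x,u)$ and $y'^*_\delta(x)$ as the Lagrangian-relaxed and equality-constrained minimizers when invoking \Cref{lemma: linear}) all reproduce the stated constants correctly.
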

\begin{proof}
    \eqref{sigma yu}, \eqref{sigma yx}, \eqref{sigma zv}, \eqref{sigma zx}, \eqref{sigma ys}, \eqref{sigma zs} is due to \Cref{lemma: continuous}. \eqref{sigma ye}, \eqref{sigma ze}, \eqref{error bound yt}, \eqref{error bound zt} is due to \Cref{lemma: eb}. \eqref{sigma y}, \eqref{sigma z} is due to \Cref{lemma: linear}.
\end{proof}

\subsection{Potential function}
In this subsection, we will prove the following descent lemma for $V_t$.
\begin{lemma}
    When  $\delta\leq \mu_g/(2l_{f,1})$, $0\leq \rho_1\leq \frac{\mu_g-\delta l_{f,1}}{\sigma_{\max}^2(A)}$, $0\leq \rho_2\leq \frac{\mu_g}{\sigma_{\max}^2(A)}$, $\eta_y=1/(4L_K)$, $\eta_z=2/(L_K+4L_d)$, $\eta_x=\min\{\eta_y\mu_y^2/(512L_\phi^2), \eta_z\mu_z^2/(96L_\phi^2), \eta_u/(64\sigma_y^2L_\phi^2), \eta_v/(4\sigma_z^2L_\phi^2), 2/(L_K+4L_d+8L_q)\}$, $\eta_u=\eta_y\mu_y^2/(32\sigma_{\max}^2(A))$, $\eta_v=\eta_z\mu_z^2/(32\sigma_{\max}^2(A))$, we have
\begin{align*}\nonumber
    V_t-V_{t+1}\geq&\frac{1}{4\eta_x}\|x_{t+1}-x_t\|^2+\frac{1}{16\eta_y}\|y'_{t+1}-y'_t\|^2+\frac{1}{8\eta_z}\|z'_{t+1}-z'_t\|^2\\ \nonumber
    &+\frac{\eta_u}{4}\|A'y'^*_\delta(x_t,u_{t+1})-b\|^2+\frac{\eta_v}{2}\|A'z'^*(x_t,v_{t+1})-b\|^2\\ 
    &+\frac{\eta_u}{4}\|A'y'_t-b\|^2+\frac{\eta_v}{4}\|A'z'_t-b\|^2+\frac{\eta_x}{4}\|\nabla \phi_\delta(x_t)\|^2
\end{align*}
Thus,
     \begin{align}
        \frac{1}{T}\sum_{t=0}^{T-1}\|\nabla \phi_\delta(x_{t})\|^2\leq \frac{4}{T\eta_x}(V_0-\min_tV_t )
    \end{align}  
\end{lemma}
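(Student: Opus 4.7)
The plan is to decompose
\[
V_t-V_{t+1}=\tfrac14(K_t-K_{t+1})+2(q_t-q_{t+1})+(d_{t+1}-d_t)
\]
and bound each of the three pieces by tracking the effect of each of the five sequential updates (the two dual updates first, then the three primal ones) through the common evaluation point $(x_t,y'_t,z'_t,u_{t+1},v_{t+1})$ that the algorithm uses. As preparatory material I would first invoke the envelope/Danskin theorems to write
\[
\nabla_u d(x,z',u,v)=A'y'^*_\delta(x,u)-b,\qquad \nabla_v q(x,v)=-(A'z'^*(x,v)-b),
\]
and use the Lipschitz constants $\sigma_{yu},\sigma_{yx},\sigma_{zv},\sigma_{zx}$ from \Cref{lemma: error bounds} to conclude that $d$ is $L_d$-smooth in all four of its arguments and $q$ is $L_q$-smooth in its two arguments. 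The identity $\nabla_x q(x,v^*(x))=\nabla\phi_\delta(x)$ (Danskin for the lower-level max) is what will eventually produce the $\|\nabla\phi_\delta(x_t)\|^2$ term in the descent.

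For the three primal updates I would apply the standard one-step inequality for projected gradient descent/ascent: strong convexity of $K$ in $y'$ (with modulus $\mu_y$) and $L_K$-smoothness give a $y'$-descent of order $\frac{1}{\eta_y}\|y'_{t+1}-y'_t\|^2$ (after cancelling the $L_K\eta_y^2$ term using $\eta_y=1/(4L_K)$), and similarly a $z'$-ascent of order $\frac{1}{\eta_z}\|z'_{t+1}-z'_t\|^2$ under $\eta_z=2/(L_K+4L_d)$. Note that neither $d$ nor $q$ is affected by the $y'$ and $z'$ steps (since they are envelopes in those variables at the relevant points). For the $x$-update, combining the $L_K$-smoothness of $K$ with the $L_d$- and $L_q$-smoothness of the other two pieces gives the $\frac{1}{\eta_x}\|x_{t+1}-x_t\|^2$ contribution, which survives once $\eta_x\le 2/(L_K+4L_d+8L_q)$.

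The delicate part is handling the $u$- and $v$-updates. Since $K$ is linear in $u$ and in $-v$, their effect on $K$ is exactly $\eta_u\|A'y'_t-b\|^2$ and $-\eta_v\|A'z'_t-b\|^2$. The same updates move $d$ and $q$ along their (concave) dual envelopes with nonmatching gradients: the algorithm uses $A'y'_t-b$ where the true envelope-gradient would call for $A'y'^*_\delta(x_t,u_{t+1})-b$. I would control the discrepancy by writing
\[
A'y'_t-b=[A'y'^*_\delta(x_t,u_{t+1})-b]+A'[y'_t-y'^*_\delta(x_t,u_{t+1})],
\]
squaring, and using the error bound $\|y'_t-y'^*_\delta(x_t,u_{t+1})\|\le\frac{2}{\mu_y\eta_y}\|y'_{t+1}-y'_t\|$ from \Cref{lemma: error bounds}. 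One half of the $\frac{1}{4}\cdot\eta_u\|A'y'_t-b\|^2$ gain is then retained as $\frac{\eta_u}{4}\|A'y'^*_\delta(x_t,u_{t+1})-b\|^2$ and the other half (after one use of Young) is absorbed by the $y'$-descent term, precisely because $\eta_u=\eta_y\mu_y^2/(32\sigma_{\max}^2(A))$. A symmetric argument produces the $\frac{\eta_v}{2}\|A'z'^*(x_t,v_{t+1})-b\|^2$ term and also yields $\frac{\eta_u}{4}\|A'y'_t-b\|^2+\frac{\eta_v}{4}\|A'z'_t-b\|^2$ as residuals. Finally, to extract $\frac{\eta_x}{4}\|\nabla\phi_\delta(x_t)\|^2$ I would use $\nabla\phi_\delta(x_t)=\nabla_x q(x_t,v^*(x_t))$, rewrite $\nabla_x K(x_t,y'_t,z'_t,u_{t+1},v_{t+1})$ as $\nabla\phi_\delta(x_t)$ plus perturbations, and bound those perturbations by $L_\phi(\|y'_t-y'^*_\delta(x_t)\|+\|z'_t-z'^*(x_t)\|)$ together with $\|v_{t+1}-v^*(x_t)\|\le\sigma_z\|A'z'^*(x_t,v_{t+1})-b\|$; the residual is absorbed by the already-collected $\frac{\eta_v}{2}\|A'z'^*(x_t,v_{t+1})-b\|^2$ and $\frac{1}{16\eta_y}\|y'_{t+1}-y'_t\|^2$ terms thanks to $\eta_x\le\min\{\eta_y\mu_y^2/(512L_\phi^2),\eta_z\mu_z^2/(96L_\phi^2),\eta_u/(64\sigma_y^2L_\phi^2),\eta_v/(4\sigma_z^2L_\phi^2)\}$.

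The main obstacle is exactly this bookkeeping of cross terms: the gradient that drives the $x$-update is evaluated at a point that is not optimal in $y',z',u,v$, and every primal or dual update produces an inexactness that needs to be re-absorbed elsewhere. The coefficient $\tfrac14$ on $K$, the $2$ on $q$, and the $-1$ on $d$ in $V_t$ are chosen precisely so that the sum of the three descent inequalities telescopes and leaves a clean lower bound; getting these weights and the stepsize relations to line up is the technical heart of the argument. Once the one-step inequality is in place, summing from $t=0$ to $T-1$ and dividing by $T\eta_x/4$ gives the stated bound on $\frac{1}{T}\sum_t\|\nabla\phi_\delta(x_t)\|^2$.
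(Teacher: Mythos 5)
Your overall strategy coincides with the paper's proof: the same decomposition $V_t-V_{t+1}=\tfrac14(K_t-K_{t+1})+2(q_t-q_{t+1})+(d_{t+1}-d_t)$, the same dual-update-then-primal-update splitting through the common point $(x_t,y'_t,z'_t,u_{t+1},v_{t+1})$, Danskin-type gradients for $d$ and $q$, and the same error bounds and Young-inequality absorptions tied to the stepsize choices. However, there is a concrete error that would make your argument collapse: the claim that ``neither $d$ nor $q$ is affected by the $y'$ and $z'$ steps.'' The function $d(x,z',u,v)=K(x,y'^*_\delta(x,u),z',u,v)$ is an envelope only in $y'$; it depends explicitly on $z'$, and this dependence is essential. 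Since $K$ enters $V_t$ with weight $+\tfrac14$, the $z'$ \emph{ascent} step increases $K$ and therefore contributes $-\tfrac14\langle\nabla_{z'}K,\,z'_{t+1}-z'_t\rangle\le-\tfrac{1}{4\eta_z}\|z'_{t+1}-z'_t\|^2$ to $V_t-V_{t+1}$, i.e.\ a loss, not the gain of order $\tfrac{1}{\eta_z}\|z'_{t+1}-z'_t\|^2$ that you attribute to it (ascent on a term carried with positive weight hurts the potential). The rescue is exactly the $-d$ term: because $\nabla_{z'}K$ does not depend on $y'$, the contribution of $d_{t+1}-d_t$ contains $+\langle\nabla_{z'}K,\,z'_{t+1}-z'_t\rangle$ with the \emph{same} inner product, so the net coefficient is $+\tfrac34$, and only then does the projection inequality yield $+\tfrac{3}{4\eta_z}\|z'_{t+1}-z'_t\|^2$, which after subtracting the $\tfrac{L_K}{8}+\tfrac{L_d}{2}$ smoothness losses gives the $\tfrac{1}{8\eta_z}$ term in the statement. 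This is also the only reason $L_d$ appears in your own choice $\eta_z=2/(L_K+4L_d)$, which is unexplained under your accounting. Without tracking $d$'s $z'$-dependence there is no valid bound at all, since $-\langle\nabla_{z'}K,\,z'_{t+1}-z'_t\rangle$ cannot be lower bounded in terms of $\|z'_{t+1}-z'_t\|^2$ once the projection truncates the step.

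A secondary slip: in this $h(x,y)=Ay-b$ setting, $\nabla_xK(x,y',z',u,v)=\nabla_x\phi_\delta(x,y,z)$ has no dependence on $u$ or $v$, so extracting $\tfrac{\eta_x}{4}\|\nabla\phi_\delta(x_t)\|^2$ only requires controlling $\|y_t-y^*_\delta(x_t)\|$ and $\|z_t-z^*(x_t)\|$ through the primal error bounds of \Cref{lemma: error bounds}; the dual distance $\|v_{t+1}-v^*(x_t)\|$ never enters. Moreover, the bound you quote for it, $\|v_{t+1}-v^*(x_t)\|\le\sigma_z\|A'z'^*(x_t,v_{t+1})-b\|$, is not one of the paper's error bounds: $\sigma_z$ controls the primal distance $\|z'^*(x,v)-z'^*(x)\|$, and controlling dual distances requires the full-row-rank assumption on $A$ (\Cref{lemma: uv error bounds}), which is precisely the assumption this theorem is designed to avoid. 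This second issue is fixable by simply dropping the dual term, but the $z'$/$d$ bookkeeping error above is a genuine gap.
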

\begin{proof}
    
First, for function $d$, we have
\begin{align*}
    &d(x_t, z'_t, u_{t+1}, v_{t+1})-d(x_t, z'_t, u_t, v_t)\\
    =&K(x_t, y'^*_\delta(x_t, u_{t+1}), z'_t, u_{t+1}, v_{t+1})-K(x_t,y'^*_\delta(x_t, u_{t}), z'_t, u_t, v_t)\\
    \geq&K(x_t, y'^*_\delta(x_t, u_{t+1}), z'_t, u_{t+1}, v_{t+1})-K(x_t,y'^*_\delta(x_t, u_{t+1}), z'_t, u_t, v_t)\\
    =& (u_{t+1}-u_t)^\top(A'y'^*_\delta(x_t,u_{t+1})-b)-(v_{t+1}-v_t)^\top(A'z'_t-b)\\
    =&-\eta_v\|A'z'_t-b\|^2+\eta_u(A'y'^*_\delta(x_t,u_{t+1})-b)^\top(A'y'_t-b)
\end{align*}
Note that
\begin{align*}
    &\nabla_x d(x, z',u,v)=\nabla_x \phi_\delta(x, y^*_\delta(x, u), z)\\
    &\nabla_{z'} d(x, z',u,v)=\begin{bmatrix}
    \nabla_z g(x,z)-A^\top v-\rho_2A^\top (A'z'-b)\\
    v+(A'z'-b)
    \end{bmatrix}
\end{align*}
Thus, according to \Cref{lemma: error bounds}, we know that
$\nabla_x d(x, z',u,v)$ is $(L_\phi+L_\phi\sigma_{yx})$-continuous w.r.t. $x,z'$ and $\nabla_z' d(x, z',u,v)$ is $L_K$-continuous w.r.t. $x,z'$. Define $L_d=\max\{L_\phi+L_\phi\sigma_{yx}, L_K\}$. We have
\begin{align*}
    &d(x_{t+1}, z'_{t+1}, u_{t+1}, v_{t+1})-d(x_t,z'_t, u_{t+1}, v_{t+1})\\
    \geq&\langle\nabla_x K(x_t, y'^*_\delta(x_t,u_{t+1}), z'_t, u_{t+1}, v_{t+1}), x_{t+1}-x_t\rangle\\
    &+\langle\nabla_{z'} K(x_t, y'^*_\delta(x_t,u_{t+1}), z'_t, u_{t+1},v_{t+1}), z'_{t+1}-z'_t\rangle\\
    &-\frac{L_d}{2}(\|x_{t+1}-x_t\|^2+\|z'_{t+1}-z'_t\|^2)\\
    \geq&\langle\nabla_x \phi_\delta(x_t, y_\delta^*(x_t,u_{t+1}), z_t), x_{t+1}-x_t\rangle+\frac{1}{\eta_z}\|z'_{t+1}-z'_t\|^2\\
    &-\frac{L_d}{2}(\|x_{t+1}-x_t\|^2+\|z'_{t+1}-z'_t\|^2)
\end{align*}

Then, for function $q$, we have
\begin{align*}
    &q(x_t,v_t)-q(x_t, v_{t+1})\\
    \geq&K(x_t, y'^*_\delta(x_t), z'^*(x_t, v_t), u_t, v_t)-K(x_t, y'^*_\delta(x_t), z'^*(x_t, v_{t+1}), u_t, v_{t+1})\\
    \geq&K(x_t, y'^*_\delta(x_t), z'^*(x_t, v_{t+1}), u_t, v_t)-K(x_t, y'^*_\delta(x_t), z'^*(x_t, v_{t+1}), u_t, v_{t+1})\\
    \geq&\eta_v(A'z'^*(x_t,v_{t+1})-b)^\top(A'z'_t-b)
\end{align*}
Note that
\begin{align*}
    \nabla_x q(x,v)=\nabla_x \phi_\delta(x, y^*_\delta(x), z^*(x,v))
\end{align*}
Thus, according to \Cref{lemma: error bounds}, $q(\cdot,v)$ is $L_q=(L_\phi+L_\phi\sigma_{zx}+L_\phi\sigma_{ys})$-smooth. We have
\begin{align*}
    &q(x_t,v_{t+1})-q(x_{t+1}, v_{t+1})\\
    \geq&\langle\nabla_x \phi_\delta(x_t, y_\delta^*(x_t), z^*(x_t, v_{t+1})), x_t-x_{t+1}\rangle-\frac{L_q}{2}(\|x_{t+1}-x_t\|^2)
\end{align*}

Finally, for function $K$, we have
\begin{align*}
    &K(x_t, y'_t, z'_t, u_t, v_t)-K(x_t, y'_t, z'_t, u_{t+1}, v_{t+1})=-\eta_u\|A'y'_t-b\|^2+\eta_v\|A'z'_t-b\|^2
\end{align*}
and
\begin{align*}
    &K(x_t, y'_t, z'_t, u_{t+1}, v_{t+1})-K(x_{t+1}, y'_{t+1}, z'_{t+1}, u_{t+1}, v_{t+1})\\
    \geq& \frac{1}{\eta_x}\|x_{t+1}-x_t\|^2+\frac{1}{\eta_y}\|y'_{t+1}-y'_t\|^2-\frac{1}{\eta_z}\|z'_{t+1}-z'_t\|^2\\
    &-\frac{L_K}{2}(\|x_{t+1}-x_t\|^2+\|y'_{t+1}-y'_t\|^2+\|z'_{t+1}-z'_t\|^2).
\end{align*}

Thus, for $V_t$, we have
\begin{align*}
    &V_t-V_{t+1}\\
    \geq& \langle\nabla_x \phi_\delta(x_t, y^*_\delta(x_t, u_{t+1}), z_t), x_{t+1}-x_t\rangle+\frac{1}{\eta_z}\|z'_{t+1}-z'_t\|^2\\
    &+\eta_u(A'y'^*_\delta(x_t,u_{t+1})-b)^\top(A'y'_t-b)-\eta_v\|A'z'_t-b\|^2-\frac{L_d}{2}(\|x_{t+1}-x_t\|^2+\|z'_{t+1}-z'_t\|^2)\\
    &+2\langle\nabla_x \phi_\delta(x_t, y^*_\delta(x_t), z^*(x_t, v_{t+1})), x_t-x_{t+1}\rangle\\
    &+2\eta_v(A'z'^*(x_t,v_{t+1})-b)^\top(A'z'_t-b)-L_q(\|x_{t+1}-x_t\|^2)\\
    &+\frac{1}{4\eta_x}\|x_{t+1}-x_t\|^2+\frac{1}{4\eta_y}\|y'_{t+1}-y'_t\|^2-\frac{1}{4\eta_z}\|z'_{t+1}-z'_t\|^2-\frac{\eta_u}{4}\|A'y'_t-b\|^2+\frac{\eta_v}{4}\|A'z'_t-b\|^2\\
    &-\frac{L_K}{8}(\|x_{t+1}-x_t\|^2+\|y'_{t+1}-y'_t\|^2+\|z'_{t+1}-z'_t\|^2)\\
    \geq&-\|\nabla_x \phi_\delta(x_t, y^*_\delta(x_t, u_{t+1}), z_t)-\nabla_x \phi_\delta(x_t, y^*_\delta(x_t), z^*(x_t, v_{t+1}))\|\|x_{t+1}-x_t\|\\
    &+\pth{\frac{\eta_u}{2}-\frac{\eta_u}{4}}\|A'y'_t-b\|^2+\frac{\eta_u}{2}\|A'y'^*_\delta(x_t,u_{t+1})-b\|^2-\frac{\eta_u}{2}\|A'y'_t-A'y_\delta'^*(x_{t},u_{t+1})\|^2\\
    &+\pth{\eta_v+\frac{\eta_v}{4}-\eta_v}\|A'z'_t-b\|^2+\eta_v\|A'z'^*(x_{t},v_{t+1})-b\|^2-\eta_v\|A'z'_t-A'z'^*(x_{t},v_{t+1})\|^2\\
    &-\frac{\eta_x}{2}\|\nabla_x \phi_\delta(x_t, y^*_\delta(x_t), z^*(x_t, v_{t+1}))-\nabla_x \phi_\delta(x_t, y_t, z_t)\|^2\\
    &+\frac{1}{2\eta_x}\|x_t-x_{t+1}\|^2+\frac{\eta_x}{2}\|\nabla_x \phi_\delta(x_t, y^*_\delta(x_t), z^*(x_t, v_{t+1}))\|^2\\
    &+\pth{\frac{1}{4\eta_x}-\frac{L_K}{8}-\frac{L_d}{2}-L_q}\|x_{t+1}-x_t\|^2\\
    &+\pth{\frac{1}{\eta_z}-\frac{1}{4\eta_z}-\frac{L_K}{8}-\frac{L_d}{2}}\|z'_{t+1}-z'_t\|^2\\
    &+\pth{\frac{1}{4\eta_y}-\frac{L_K}{8}}\|y'_{t+1}-y'_t\|^2\\
    \geq&-\frac{1}{4\eta_x}\|x_{t+1}-x_t\|^2-2\eta_xL_\phi^2\|y'^*_\delta(x_t,u_{t+1})-y'^*_\delta(x_t)\|^2-2\eta_xL_\phi^2\|z'^*(x_t,v_{t+1})-z'_t\|^2\\
    &+\frac{\eta_u}{4}\|A'y'_t-b\|^2+\frac{\eta_u}{2}\|A'y'^*_\delta(x_t,u_{t+1})-b\|^2-\frac{\eta_u \sigma_{\max}^2(A)}{2}\|y'^*_\delta(x_t,u_{t+1})-y'_t\|^2\\
    &+\frac{\eta_v}{4}\|A'z'_t-b\|^2+\eta_v\|A'z'^*(x_{t},v_{t+1})-b\|^2-\eta_v \sigma_{\max}^2(A) \|z'^*(x_t,v_{t+1})-z'_t\|^2\\
    &-2\eta_xL_\phi^2\|y'^*_\delta(x_t)-y'^*_\delta(x_t,u_{t+1})\|^2-2\eta_xL_\phi^2\|y'^*_\delta(x_t,u_{t+1})-y'_t\|^2-\eta_xL_\phi^2\|z'^*(x_t, v_{t+1})-z'_t\|^2\\
    &+\frac{\eta_x}{4}\|\nabla_x \phi_\delta(x_t, y^*_\delta(x_t), z^*(x_t))\|^2-\frac{\eta_xL_\phi^2}{2}\|z'^*(x_t)-z'^*(x_t, v_{t+1})\|^2+\frac{1}{2\eta_x}\|x_t-x_{t+1}\|^2\\
    &+\pth{\frac{1}{4\eta_x}-\frac{L_K}{8}-\frac{L_d}{2}-L_q}\|x_{t+1}-x_t\|^2\\
    &+\pth{\frac{3}{4\eta_z}-\frac{L_K}{8}-\frac{L_d}{2}}\|z'_{t+1}-z'_t\|^2\\
    &+\pth{\frac{1}{4\eta_y}-\frac{L_K}{8}}\|y'_{t+1}-y'_t\|^2,
\end{align*}
and
\begin{align*}
    &V_t-V_{t+1}\\
    \geq&\pth{\frac{1}{2\eta_x}+\frac{1}{4\eta_x}-\frac{1}{4\eta_x}-\frac{L_K}{8}-\frac{L_d}{2}-L_q}\|x_{t+1}-x_t\|^2\\
    &+\pth{\frac{1}{4\eta_y}-\frac{L_K}{8}}\|y'_{t+1}-y'_t\|^2-\pth{4\eta_xL_\phi^2+\frac{\eta_u \sigma_{\max}^2(A)}{2}}\|y'^*_\delta(x_t,u_{t+1})-y'_t\|^2\\
    &+\pth{\frac{3}{4\eta_z}-\frac{L_K}{8}-\frac{L_d}{2}}\|z'_{t+1}-z'_t\|^2-\pth{3\eta_xL_\phi^2+\eta_v \sigma_{\max}^2(A)}\|z'^*(x_t,v_{t+1})-z'_t\|^2\\
    &+\frac{\eta_u}{4}\|A'y'_t-b\|^2+\frac{\eta_u}{2}\|A'y'^*_\delta(x_t,u_{t+1})-b\|^2-4\eta_xL_\phi^2\|y_\delta'^*(x_t,u_{t+1})-y'^*_\delta(x_t)\|^2\\
    &+\frac{\eta_v}{4}\|A'z'_t-b\|^2+\eta_v\|A'z'^*(x_t,v_{t+1})-b\|^2-\frac{\eta_xL_\phi^2}{2}\|z'^*(x_t, v_{t+1})-z'^*(x_t)\|^2\\
    &+\frac{\eta_x}{4}\|\nabla \phi_\delta(x_t)\|^2\\
    \geq&\pth{\frac{1}{2\eta_x}-\frac{L_K}{8}-\frac{L_d}{2}-L_q}\|x_{t+1}-x_t\|^2\\
    &+\pth{\frac{1}{4\eta_y}-\frac{L_K}{8}-\frac{16\eta_xL_\phi^2+2\eta_u\sigma_{\max}^2(A)}{\mu_y^2\eta_y^2}}\|y'_{t+1}-y'_t\|^2\\
    &+\pth{\frac{3}{4\eta_z}-\frac{L_K}{8}-\frac{L_d}{2}-\frac{12\eta_xL_\phi^2+4\eta_v\sigma_{\max}^2(A)}{\mu_z^2\eta_z^2}}\|z'_{t+1}-z'_t\|^2\\
    &+\pth{\frac{\eta_u}{2}-4\sigma_y^2\eta_xL_\phi^2}\|A'y'^*_\delta(x_t,u_{t+1})-b\|^2\\
    &+\pth{\eta_v-\frac{\sigma_z^2\eta_xL_\phi^2}{2}}\|A'z'^*(x_t,v_{t+1})-b\|^2\\
    &+\frac{\eta_u}{4}\|A'y'_t-b\|^2+\frac{\eta_v}{4}\|A'z'_t-b\|^2+\frac{\eta_x}{4}\|\nabla \phi_\delta(x_t)\|^2\\
\end{align*}
where the last equality is due to \Cref{lemma: error bounds}. 

Thus, when  $\delta\leq \mu_g/(2l_{f,1})$, $0\leq \rho_1\leq \frac{\mu_g-\delta l_{f,1}}{\sigma_{\max}^2(A)}$, $0\leq \rho_2\leq \frac{\mu_g}{\sigma_{\max}^2(A)}$, $\eta_y=1/(4L_K)$, $\eta_z=2/(L_K+4L_d)$, $\eta_x=\min\{\eta_y\mu_y^2/(512L_\phi^2), \eta_z\mu_z^2/(96L_\phi^2), \eta_u/(64\sigma_y^2L_\phi^2), \eta_v/(4\sigma_z^2L_\phi^2), 2/(L_K+4L_d+8L_q)\}$, $\eta_u=\eta_y\mu_y^2/(32\sigma_{\max}^2(A))$, $\eta_v=\eta_z\mu_z^2/(32\sigma_{\max}^2(A))$, we have
\begin{align}\nonumber
    &V_t-V_{t+1}\\ \nonumber
    \geq&\frac{1}{4\eta_x}\|x_{t+1}-x_t\|^2+\frac{1}{16\eta_y}\|y'_{t+1}-y'_t\|^2+\frac{1}{8\eta_z}\|z'_{t+1}-z'_t\|^2\\ \nonumber
    &+\frac{\eta_u}{4}\|A'y'^*_\delta(x_t,u_{t+1})-b\|^2+\frac{\eta_v}{2}\|A'z'^*(x_t,v_{t+1})-b\|^2\\ 
    &+\frac{\eta_u}{4}\|A'y'_t-b\|^2+\frac{\eta_v}{4}\|A'z'_t-b\|^2+\frac{\eta_x}{4}\|\nabla \phi_\delta(x_t)\|^2 \label{eq: v}
\end{align}

Thus,
     \begin{align}\label{nabla h}
        \frac{1}{T}\sum_{t=0}^{T-1}\|\nabla \phi_\delta(x_{t})\|^2\leq \frac{4}{T\eta_x}(V_0-\min_tV_t )
    \end{align}

\end{proof}

\subsection*{Proof of \Cref{thm: Ay}}

When $\delta\leq \mu_g/(2l_{f,1})$, $0\leq \rho_1\leq \frac{\mu_g-\delta l_{f,1}}{\sigma_{\max}^2(A)}$, $0\leq \rho_2\leq \frac{\mu_g}{\sigma_{\max}^2(A)}$, $\eta_y=1/(4L_K)$, $\eta_z=2/(L_K+4L_d)$, $\eta_x=\min\{\eta_y\mu_y^2/(512L_\phi^2), \eta_z\mu_z^2/(96L_\phi^2), \eta_u/(64\sigma_y^2L_\phi^2), \eta_v/(4\sigma_z^2L_\phi^2), 2/(L_K+4L_d+8L_q)\}$, $\eta_u=\eta_y\mu_y^2/(32\sigma_{\max}^2(A))$, $\eta_v=\eta_z\mu_z^2/(32\sigma_{\max}^2(A))$, according to \eqref{nabla h}, we have

 \begin{align}
    \frac{1}{T}\sum_{t=0}^{T-1}\|\nabla \phi_\delta(x_{t})\|^2\leq \frac{4}{T\eta_x}(V_0-\min_tV_t )
\end{align}  

Note that 
\begin{align*}
    V_t=&\frac{1}{4}K(x_t, y'_t, z'_t, u_t, v_t)+2q(x_t,v_t)-d(x_t,z'_t, u_t, v_t)\\
    \geq& 2q(x_t,v_t)-\frac{3}{4}d(x_t,z'_t, u_t, v_t)\geq \frac{5}{4}q(x_t,v_t)\geq \frac{5}{4}\phi_\delta(x_t)\geq \frac{5\delta\Phi^*}{4}-\frac{5\delta^2 l^2_{f,0}}{8\mu_g}
\end{align*}
Therefore, when $\delta=\Theta(\epsilon)$, with $T=O(\epsilon^{-4})$, we have $t\in[T]$, such that $\|\nabla \Phi_\delta(x_{t})\|=\|\frac{1}{\delta}\nabla \phi_\delta(x_{t})\|\leq \epsilon$.

Moreover, if we have $x_0, y_0, z_0, u_0, v_0$ such that
\begin{align*}
        \|y_0-y^*_\delta(x_0)\|\leq O(\delta)\\
        \|A'y'^*_\delta(x_0,u_0)-b\|\leq O(\delta)\\
        \|A'y_0'-b\|\leq O(\delta)\\
        \|z_0-z^*(x_0)\|\leq O(\delta)\\
        \|A'z'^*(x_0,v_0)-b\|\leq O(\delta)\\
        \|A'z_0'-b\|\leq O(\delta)
\end{align*}
Then, we have
    \begin{align*}
        &V_0\\
        =&\frac{1}{4}\bigg[\delta f(x_0, y_0)+ (g(x_0, y_0)-g(x_0, z_0))+u_0^\top(A'y_0'-b) -v_0^\top(A'z_0'-b)\\
        &+\frac{\rho_1}{2}\|A'y_0'-b\|^2-\frac{\rho_2}{2}\|A'z_0'-b\|^2\bigg]\\
        &+2\bigg[\delta f(x_0, y^*_\delta(x_0))+ (g(x_0, y^*_\delta(x_0))-g(x_0, z^*(x_0,v_0))) -v_0^\top(A'z'^*(x_0,v_0)-b)\\
        &-\frac{\rho_2}{2}\|A'z'^*(x_0,v_0)-b\|^2\bigg]\\
        &-\bigg[\delta f(x_0, y^*_\delta(x_0,u_0))+ (g(x_0, y^*_\delta(x_0,u_0))-g(x_0, z_0))+u_0^\top(A'y'^*_\delta(x_0,u_0)-b) \\
        &-v_0^\top(A'z_0'-b)+\frac{\rho_1}{2}\|A'y'^*_\delta(x_0,u_0)-b\|^2-\frac{\rho_2}{2}\|A'z_0'-b\|^2\bigg]\\
        \leq& \frac{5\delta \Phi(x_0)}{4}+O(1)l_{f,1}(\|y_0-y^*(x_0)\|+\|y^*_\delta(x_0)-y^*(x_0)\|+\|y^*_\delta(x_0,u_0)-y^*(x_0)\|)\\
        &+O(1)C_g(\|z_0-y_0\|+\|y^*_\delta(x_0)-z^*(x_0,v_0)\|+\|y^*_\delta(x_0,u_0)-z_0\|)\\
        &+O(\|A'z_0'-b\|+\|A'z_0'-b\|^2+\|A'y_0'-b\|+\|A'y_0'-b\|^2\\
        &+\|A'z'^*(x_0,v_0)-b\|+\|A'z'^*(x_0,v_0)-b\|^2+\|A'y'^*_\delta(x_0,u_0)-b\|+\|A'y'^*_\delta(x_0,u_0)-b\|^2)\\
        \leq&\frac{5\delta \Phi(x_0)}{4}+O(\delta)
    \end{align*}
where $G=\max\{g(x_0, y_0), g(x_0, z_0), g(x_0, y^*_\delta(x_0),g(x_0, z^*(x_0,v_0)), g(x_0, y^*_\delta(x_0,u_0)\}$, $\Cc=\{y\in \R^{d_y}| g(x_0, y)\leq G\}$,
$C_g=\sup_{y\in \Cc}\nabla_y g(x_0,y)$. Since $g(x_0, y)$ is strongly convex w.r.t $y$, its sub-level set $\Cc$ is compact and convex. Moreover, since $g$ is Lipschitz
smoothness, its gradient in this compact set $\Cc$ is upper bounded by an $O(1)$ constant $C_g$.

We can notice that
\begin{align*}
V_0-\min_tV_t\leq   \frac{5\delta [\Phi(x_0)-\Phi^*]}{4}+O(\delta)=O(\epsilon)
\end{align*}
and
\begin{align*}
    \frac{1}{T}\sum_{t=0}^{T-1}\|\nabla \Phi_\delta(x_{t})\|^2= \frac{1}{T\delta^2}\sum_{t=0}^{T-1}\|\nabla \phi_\delta(x_{t})\|^2 \leq \frac{4}{T\eta_x\delta^2}(V_0-\min_tV_t )=O\pth{\frac{\epsilon^{-1}}{T}}.
\end{align*}
Therefore, we can find an $\epsilon$-stationary point of $\Phi_\delta(x)$ with a complexity of $O(\epsilon^{-3})$.

\subsection*{Proof of \Cref{coro: Ay}}
    For fixed $x_0$, define $W_t=\frac{1}{4}K(x_0, y'_t, z'_t, u_t, v_t)+2q(x_0,v_t)-d(x_0,z'_t, u_t, v_t)$. According to \eqref{eq: v}, with appropriate parameters, we have
    \begin{align*}
    &W_t-W_{t+1}\\
    \geq&
    \frac{1}{16\eta_y}\|y'_{t+1}-y'_t\|^2+\frac{1}{8\eta_z}\|z'_{t+1}-z'_t\|^2\\
    &+\frac{\eta_u}{4}\|A'y'^*_\delta(x_0,u_{t+1})-b\|^2+\frac{\eta_v}{2}\|A'z'^*(x_0,v_{t+1})-b\|^2\\
    &+\frac{\eta_u}{4}\|A'y'_t-b\|^2+\frac{\eta_v}{4}\|A'z'_t-b\|^2
    \end{align*}

Thus, when $T=O(\epsilon^2)$, we can find $t\in T$ such that
\begin{align*}
    &\|y'_{t+1}-y'_t\|\leq \delta\\
    &\|A'y'^*_\delta(x_0,u_{t+1})-b\|\leq \delta\\
    &\|A'y'_t-b\|\leq \delta\\
    &\|z'_{t+1}-z'_t\|\leq \delta\\
    &\|A'z'^*(x_0,v_{t+1})-b\|\leq \delta\\
    &\|A'z'_t-b\|\leq \delta
\end{align*}
Denote the active set at $y^*_\delta(x_0)$ as $\Ic^\alpha$, $\Jc^\alpha=[d_h]/\Ic$. Define $\Delta^\alpha=\min_{i\in\Jc^\alpha}|[\alpha^*_\delta(x_0)]_i|$. Denote the active set of $z^*(x_0)$ as $\Ic^\beta$, $\Jc^\beta=[d_h]/\Ic$. Define $\Delta^\beta=\min_{i\in\Jc^\beta}|[\beta^*(x_0)]_i|$. Set $\epsilon\leq \min\{\Delta^\alpha/(6\sigma_{\alpha}), \Delta^\alpha/(6\sigma_y), \Delta^\beta/(6\sigma_{\beta}), \Delta^\beta/(6\sigma_z)\}$. According to \Cref{lemma: uv eb licq}, we have
\begin{align*}
    \|u_{t+1}-u^*_\delta(x_0)\|\leq O(\delta)\\
    \|v_{t+1}-v^*(x_0)\|\leq O(\delta)
\end{align*}
Then, if we set $y'_0=y'_t$, $z'_0=z'_t$, $u_0= u_{t+1}$, $v_0=v_{t+1}$, with $x_0, y'_0, z'_0, u_0, v_0$ as initial points, according to \Cref{thm: Ay}, we can find an $\epsilon$-stationary point of $\Phi_\delta$
with a complexity of $O(\epsilon^{-3})$.

Thus, the total complexity is $O(\epsilon^{-3}+\epsilon^{-2})=O(\epsilon^{-3})$.

\begin{lemma}\label{lemma: uv eb licq}
    For a fixed $x_0$, denote the active set at $y^*_\delta(x_0)$ as $\Ic^\alpha$, $\Jc^\alpha=[d_h]/\Ic$. We have $[\alpha^*_\delta(x_0)]_{\Ic^\alpha}=0$ and $[\alpha^*_\delta(x_0)]_{\Jc^\alpha}< 0$. Suppose $s^\alpha_{\min}=\sigma_{\min}(A_{\Ic^\alpha})>0$.  Define $\Delta^\alpha=\min_{i\in\Jc^\alpha}|[\alpha^*_\delta(x_0)]_i|$. When $\|y'_{t+1}-y'_t\|\leq \Delta^\alpha/(6\sigma_{\alpha}), \|A'y'^*_\delta(x_0,u_{t+1})-b\|\leq \Delta^\alpha/(6\sigma_y) $, we have
\begin{align*}
    \|u_{t+1}-u^*_\delta(x_0)\|\leq& \sigma_{uyx0}\|y'_{t+1}-y'_t\|+\sigma_{u2x0}\|A'y'^*_\delta(x_0,u_{t+1})-b\|+ \sigma_{u1x0}\|A'y'_t-b\|
\end{align*}
where $\sigma_{uyx0}=\frac{1}{\eta_y}+\frac{1+\sigma_{\max}}{\eta_ys_{\min}^\alpha}+\frac{L_g\sigma_\alpha}{s_{\min}^\alpha}$, $\sigma_{u2x0}=\frac{L_g\sigma_y}{s_{\min}^\alpha}$, $\sigma_{u1x0}=\frac{\rho_1 \sigma_{\max}(A)}{s_{\min}^\alpha}$.

Similarly, for a fixed $x_0$, denote the active set of $z^*(x_0)$ as $\Ic^\beta$, $\Jc^\beta=[d_h]/\Ic$. Suppose $s^\beta_{\min}=\sigma_{\min}(A_{\Ic^\beta})>0$. Define $\Delta^\beta=\min_{i\in\Jc^\beta}|[\beta^*(x_0)]_i|$. When $\|z'_{t+1}-z'_t\|\leq \Delta^\beta/(6\sigma_{\beta}), \|A'z'^*_\delta(x_0,v_{t+1})-b\|\leq \Delta^\beta/(6\sigma_z) $, we have
\begin{align*}
    \|v_{t+1}-v^*(x_0)\|\leq& \sigma_{uzx0}\|z'_{t+1}-z'_t\|+\sigma_{v2x0}\|A'z'^*(x_0,v_{t+1})-b\|+ \sigma_{v1x0}\|A'z'_t-b\|
\end{align*}
where $\sigma_{uzx0}=\frac{1}{\eta_z}+\frac{1+\sigma_{\max}}{\eta_zs_{\min}^\beta}+\frac{l_{g,1}\sigma_\beta}{s_{\min}^\beta}$, $\sigma_{v2x0}=\frac{l_{g,1}\sigma_z}{s_{\min}^\beta}$, $\sigma_{v1x0}=\frac{\rho_2 \sigma_{\max}(A)}{s_{\min}^\beta}$.

\end{lemma}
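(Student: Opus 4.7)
\textbf{Proof plan for \Cref{lemma: uv eb licq}.} The strategy exploits the strict-complementarity gap $\Delta^\alpha$: under the stated smallness hypotheses the $\alpha$-components indexed by $\Jc^\alpha$ stay strictly in the interior of the negative orthant, so the projection $\Pi_-$ is inactive there and the $\alpha$-part of the projected-gradient update becomes a componentwise equality. This pins down $[u_{t+1}-u^*_\delta(x_0)]_{\Jc^\alpha}$, while the unconstrained $y$-update combined with full-column-rank invertibility of $A_{\Ic^\alpha}^\top$ pins down the component on $\Ic^\alpha$.

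\emph{Step 1 (active-set matching).} Combining the error bound \eqref{error bound yt} (with $x_t=x_0$) and \eqref{sigma y},
\begin{align*}
\|\alpha_t-\alpha^*_\delta(x_0)\|\leq\|y'_t-y'^*_\delta(x_0,u_{t+1})\|+\|y'^*_\delta(x_0,u_{t+1})-y'^*_\delta(x_0)\|\leq \sigma_\alpha\|y'_{t+1}-y'_t\|+\sigma_y\|A'y'^*_\delta(x_0,u_{t+1})-b\|,
\end{align*}
which is at most $\Delta^\alpha/3$ by hypothesis, so $\|\alpha_{t+1}-\alpha^*_\delta(x_0)\|\leq\|\alpha_{t+1}-\alpha_t\|+\Delta^\alpha/3\leq\|y'_{t+1}-y'_t\|+\Delta^\alpha/3\leq\Delta^\alpha/2$. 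Since $[\alpha^*_\delta(x_0)]_i\leq-\Delta^\alpha$ for $i\in\Jc^\alpha$, both $[\alpha_t]_i$ and $[\alpha_{t+1}]_i$ remain $<-\Delta^\alpha/2$ on $\Jc^\alpha$, so $\Pi_-$ is inactive at those indices.

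\emph{Step 2 (the $\Jc^\alpha$ block).} The $\alpha$-update $\alpha_{t+1}=\Pi_-(\alpha_t+\eta_y u_{t+1}+\eta_y\rho_1(A'y'_t-b))$ thus holds with equality on $\Jc^\alpha$, and together with complementary slackness $[u^*_\delta(x_0)]_{\Jc^\alpha}=0$ yields
\begin{align*}
[u_{t+1}-u^*_\delta(x_0)]_i=\bigl[(\alpha_{t+1}-\alpha_t)/\eta_y-\rho_1(A'y'_t-b)\bigr]_i,\qquad i\in\Jc^\alpha,
\end{align*}
so $\|[u_{t+1}-u^*_\delta(x_0)]_{\Jc^\alpha}\|\leq \|y'_{t+1}-y'_t\|/\eta_y+\rho_1\|A'y'_t-b\|$. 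The unconstrained $y$-update gives $A^\top u_{t+1}=(y_t-y_{t+1})/\eta_y-\nabla_y g_\delta(x_0,y_t)-\rho_1 A^\top(A'y'_t-b)$; subtracting the KKT identity $A^\top u^*_\delta(x_0)=-\nabla_y g_\delta(x_0,y^*_\delta(x_0))$ and writing $w=u_{t+1}-u^*_\delta(x_0)=w_I+w_J$ with supports on $\Ic^\alpha,\Jc^\alpha$, I invert $A_{\Ic^\alpha}^\top$ using $\sigma_{\min}(A_{\Ic^\alpha})\geq s_{\min}^\alpha$ on $A_{\Ic^\alpha}^\top w_I=\mathrm{RHS}-A_{\Jc^\alpha}^\top w_J$. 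Lipschitz smoothness of $\nabla_y g_\delta$, together with the Step~1 estimate $\|y_t-y^*_\delta(x_0)\|\leq \sigma_\alpha\|y'_{t+1}-y'_t\|+\sigma_y\|A'y'^*_\delta(x_0,u_{t+1})-b\|$ and the Step~2 bound on $\|w_J\|$, then gives the three-term inequality with coefficients matching $\sigma_{uyx0},\sigma_{u2x0},\sigma_{u1x0}$ after applying $\|w\|\leq\|w_I\|+\|w_J\|$.

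\emph{Main obstacle.} The delicate point is Step~1: controlling $\alpha_{t+1}$ as well as $\alpha_t$ inside the gap $\Delta^\alpha$, because $\alpha_{t+1}$ is not directly bounded by the hypotheses and must be handled via $\|\alpha_{t+1}-\alpha_t\|\leq\|y'_{t+1}-y'_t\|$ stacked on the Step~1 bound on $\alpha_t$. The bound for $\|v_{t+1}-v^*(x_0)\|$ is proved symmetrically: replace $(y,\alpha,u,\rho_1,L_g)$ by $(z,\beta,v,\rho_2,l_{g,1})$, use $\Delta^\beta$ and $s^\beta_{\min}$ in place of their $\alpha$-counterparts, and observe that the sign flip in the $v$-update does not affect any of the displacement inequalities nor the active-set argument.
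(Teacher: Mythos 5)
Your proposal is correct and follows essentially the same route as the paper's own proof: use the error bounds \eqref{error bound yt} and \eqref{sigma y} to trap $\alpha_t$ and $\alpha_{t+1}$ inside the strict-complementarity gap $\Delta^\alpha$ so that $\Pi_-$ is inactive on $\Jc^\alpha$, read off the $\Jc^\alpha$ block of $u_{t+1}-u^*_\delta(x_0)$ from the unprojected $\alpha$-update together with complementary slackness, and recover the $\Ic^\alpha$ block by subtracting the KKT stationarity identity from the $y$-update and inverting $A_{\Ic^\alpha}^\top$ via $s^\alpha_{\min}>0$, exactly as the paper does. The single point of divergence is that you (correctly) retain the penalty-gradient term $\rho_1(A'y'_t-b)$ in the $\Jc^\alpha$-block identity, which the paper's own computation silently drops, so your derivation yields a coefficient on $\|A'y'_t-b\|$ of roughly $\sigma_{u1x0}+\rho_1\bigl(1+\sigma_{\max}(A)/s^\alpha_{\min}\bigr)$ rather than exactly $\sigma_{u1x0}$ --- a harmless constant-level discrepancy (all downstream uses only need $O(1)$ constants) that originates in the paper's proof, not in your argument.
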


\begin{proof}

Denote the active set of $y^*_\delta(x_0)$ as $\Ic$, $\Jc=[d_h]/\Ic$. We have $[\alpha^*_\delta(x_0)]_\Ic=0$ and $[\alpha^*_\delta(x_0)]_\Jc< 0$.
Note that
\begin{align*}
    \|\alpha_t-\alpha^*_\delta(x_0)\|\leq&\|y'_t-y'^*_\delta(x_0)\|\leq \|y_t-y^*_\delta(x_0, u_{t+1})\|+\|y^*_\delta(x_0)-y^*_\delta(x_0, u_{t+1})\|\\
    \leq& \sigma_{\alpha}\|y'_{t+1}-y'_t\|+\sigma_{y}\|A'y'^*_\delta(x_0,u_{t+1})-b\|
\end{align*}
Define $\Delta=\min_{i\in\Jc}|[\alpha^*_\delta(x_0)]_i|$. When $\|y'_{t+1}-y'_t\|\leq \Delta/(6\sigma_{\alpha}), \|A'y'^*_\delta(x_0,u_{t+1})-b\|\leq \Delta/(6\sigma_y) $, we have
\begin{align*}
    \|\alpha_t-\alpha^*_\delta(x_0)\|\leq \frac{1}{3}\Delta\\
    \|\alpha_{t+1}-\alpha_t\|\leq \frac{1}{6}\Delta
\end{align*}

Thus, for $\Jc$, we have
\begin{align*}
    [\alpha_t]_\Jc < 0\\
    [\alpha_{t+1}]_\Jc < 0
\end{align*}
Therefore, there are no projection in the update of $\alpha_{t+1}$ and we have
\begin{align*}
    \|[u_{t+1}]_\Jc-[u^*_\delta(x_0)]_\Jc\|=\|[u_{t+1}]_\Jc\|=\|\frac{1}{\eta_y} ([\alpha_{t+1}]_\Jc-[\alpha_t]_\Jc)\|\leq \frac{1}{\eta_y}\|\alpha_{t+1}-\alpha_t\|
\end{align*}

Moreover, for $\Ic$, we have
\begin{align*}
    \nabla_y g_\delta(x_0, y^*_\delta(x_0))+A_\Ic^\top [u^*_\delta(x_0)]_\Ic=0
\end{align*}
Thus,
\begin{align*}
    \nabla_y g_\delta(x_0, y_t)+A_\Ic^\top [u_{t+1}]_\Ic+A_\Jc^\top [u_{t+1}]_\Jc+\rho_1 A^\top(A'y'_t-b)=\frac{1}{\eta_y}(y_{t+1}-y_t)
\end{align*}
Suppose $\sigma_{\min}(A_\Ic)=s_{\min}$ , we have
\begin{align*}
    &\|[u_{t+1}]_\Ic-[u^*_\delta(x_0)]_\Ic\|\\
    \leq& \frac{1}{s_{\min}} [\frac{1}{\eta_y}\|y_{t+1}-y_t\|+L_g\|y_t-y^*_\delta(x_0)\|+\rho_1 \sigma_{\max}(A)\|A'y'_t-b\|+\sigma_{\max}(A)\|u_{t+1}]_\Jc\|]\\
    \leq & \frac{1}{s_{\min}} \Bigg[\pth{\frac{1+\sigma_{\max}}{\eta_y}+L_g\sigma_\alpha}\|y'_{t+1}-y'_t\|+L_g\sigma_{y}\|A'y'^*_\delta(x_0,u_{t+1})-b\|\\
    &+\rho_1 \sigma_{\max}(A)\|A'y'_t-b\|\Bigg]
\end{align*}

Thus,
\begin{align*}
    \|u_{t+1}-u^*_\delta(x_0)\|\leq& \sigma_{uyx0}\|y'_{t+1}-y'_t\|+\sigma_{u2x0}\|A'y'^*_\delta(x_0,u_{t+1})-b\|+ \sigma_{u1x0}\|A'y'_t-b\|
\end{align*}
where $\sigma_{uyx0}=\frac{1}{\eta_y}+\frac{1+\sigma_{\max}}{\eta_ys_{\min}}+\frac{L_g\sigma_\alpha}{s_{\min}}$, $\sigma_{u2x0}=\frac{L_g\sigma_y}{s_{\min}}$, $\sigma_{u1x0}=\frac{\rho_1 \sigma_{\max}(A)}{s_{\min}}$

Similar conditions and conclusions also hold for $\|v_{t+1}-v^*(x_0)\|$.

\end{proof}

\section{Proofs of \Cref{thm: Bx+Ay} and \Cref{coro: Bx+Ay}}\label{app: Bx+Ay}
In this section, we provide proofs for \Cref{thm: Bx+Ay} and \Cref{coro: Bx+Ay}.
We first introduce the additional notations and lemmas that will be used in this section.
\subsection*{Notations}
\begin{align*}
    &K(x,y',z',u,v)=\phi_\delta(x, y, z)+u^\top(Bx+A'y'-b)- v^\top(Bx+A'z'-b)\\
    &+\frac{\rho_1}{2}\|Bx+A'y'-b\|^2-\frac{\rho_2}{2}\|Bx+A'z'-b\|^2\\
    &L_K=l_\delta+2 l_{g,1}+\max\{\rho_1, \rho_2\}\max\{\sigma_{\max}^2(A'), \sigma_{\max}^2(B),\sigma_{\max}(B)\sigma_{\max}(A')\}\\
    &\mu_y = \min\{\mu_g-l_\delta-\rho_1\sigma_{\max}^2(A), \frac{\rho_1}{2}\}\\
    &\mu_z = \min\{\mu_g-\rho_2\sigma_{\max}^2(A), \frac{\rho_2}{2}\}\\
    &y^*_\delta(x)=\min_{y \in \Yc(x)} g_{\delta}(x,y)\\
    &z^*(x)=\argmin_{z\in \Yc(x)} g(x,z)\\
    &u^*_\delta(x)=\argmax_{u\in \R_+}\min_{y \in \Yc(x)}g_{\delta}(x,y)+u^\top(Bx+Ay-b)\\
    &v^*(x)=\argmax_{v\in \R_+}\min_{z \in \Yc(x)}g(x,y)+v^\top(Bx+Az-b)\\
    &y'^*_\delta(x,u)=\argmin_{y'\in \Pc_y} K(x,y',z',u,v)\\
    &z'^*(x,v)=\argmax_{z'\in \Pc_y} K(x,y',z',u,v)\\
    &[y^*_\delta(x,u)^\top, \alpha^*_\delta(x,u)^\top]^\top=y'^*_\delta(x,u)\\
    &[z^*(x,v)^\top, \beta^*(x,v)^\top]^\top=z'^*(x,v)\\
    &d(x, z', u, v)=K(x, y'^*_\delta(x,u), z', u,v)\\
    &q(x, v)=\phi_\delta(x, y^*_\delta(x), z^*(x, v))-v^\top(Bx+A'z'^*(x, v)-b)-\frac{\rho_2}{2}\|Bx+A'z'^*(x, v)-b\|^2\\
    &V_t=\frac{1}{4}K(x_t, y'_t, z'_t, u_t, v_t)+2q(x_t,v_t)-d(x_t,z'_t, u_t, v_t)
\end{align*}

\subsection*{\Cref{lemma: multiplier}}
\emph{When the LICQ condition (\Cref{def: licq}) holds for $y^*(x)$ and $y^*_\delta(x)$, the optimal Lagrangian multipliers of $y^*(x)$ and $y^*_\delta(x)$ are unique and we have
    \begin{align*}
        u^*_\delta(x)&=\argmax_{u\in \R_+}\min_{y \in \Yc(x)}g_{\delta}(x,y)+u^\top h(x,y)=\argmax_{u\in \R^{d_h}}\min_{y' \in \Pc_y} K(x,y',z',u,v),\\
        v^*(x)&=\argmax_{v\in \R_+}\min_{z \in \Yc(x)}g(x,z)+v^\top h(x,z)=\argmin_{v\in \R^{d_h}}\max_{z' \in \Pc_y} K(x,y',z',u,v).
    \end{align*}
}

\begin{proof}
    Suppose $$v_1=\argmax_{v\in \R_+}\min_{z \in \Yc(x)}g(x,y),$$ $$v_2=\argmax_{u\in \R^{d_h}}\min_{y' \in \Pc_y} K(x,y',z',u,v).$$
    The KKT conditions for $v_1$ are
    \begin{align*}
        &\nabla_y g(x, z^*(x))+A^\top v_1=0\\
        &Bx+Az^*(x)-b\leq0\\
        &v_1\geq 0\\
        &v_1^\top(Bx+Az^*(x)-b)=0
    \end{align*}
    The KKT conditions for $v_2$ are
    \begin{align*}
        &\nabla_y g(x, z^*(x))+A^\top v_1+\rho_2(Bx+A'z'^*(x)-b)=0\\
        &Bx+A'z'^*(x)-b=0\\
        &\beta\leq 0\\
        &v_1 \in \partial I_-(\beta)
    \end{align*}
Note that these two KKT conditions are equivalent. Moreover, since the LICQ condition (\Cref{def: licq}) holds for $z^*(x)$, we have $v_1=v_2$. Similar conditions and conclusions also hold for $u^*_\delta(x)$.
\end{proof}

\begin{lemma}
    $K(x,y',z',u,v)$ is $\mu_y$-strongly w.r.t. $y'$, $\mu_z$-strongly concave w.r.t. $z'$, and $L_K$-smooth w.r.t. $x, y', z'$.
\end{lemma}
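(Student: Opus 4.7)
The plan is to extend \Cref{lemma: sc of y z} from the decoupled case $h(y)=Ay-b$ to the coupled case $h(x,y)=Bx+Ay-b$. The crucial observation is that when $x$ is held fixed, the term $Bx$ appears only as a constant affine shift of the right-hand side inside the augmented Lagrangian penalties, so the strong convexity in $y'$ and strong concavity in $z'$ reduce to direct applications of \Cref{lemma: sc} with $b$ replaced by $b-Bx$. The $B$ then only shows up when we assess smoothness jointly in $(x,y',z')$.

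\textbf{Strong convexity in $y'$.} Fix $(x,z',u,v)$ and view $K(x,\cdot,z',u,v)$ as a function of $y'=(y^\top,\alpha^\top)^\top\in\Pc_y$. Because $\delta\le \mu_g/(2l_{f,1})$, the map $y\mapsto \phi_\delta(x,y,z)=\delta f(x,y)+g(x,y)-g(x,z)$ is $(\mu_g-\delta l_{f,1})$-strongly convex and in particular at least $(\mu_g-l_\delta)$-strongly convex, since $l_\delta=\mu_g/2$. Rewriting the relevant part of $K$ as $u^\top(A'y'-(b-Bx))+\tfrac{\rho_1}{2}\|A'y'-(b-Bx)\|^2$ plus terms independent of $y'$ places us exactly in the template of \Cref{lemma: sc} (with $A'$ in place of $A$ and $b-Bx$ in place of $b$). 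Since $\rho_1\le (\mu_g-l_\delta)/\sigma_{\max}^2(A)$ holds under the stated range, that lemma yields $\mu_y$-strong convexity of $K$ in $y'$. The symmetric argument applied to $-K(x,y',\cdot,u,v)$ (using $\mu_g$-strong concavity of $-g(x,\cdot)$ and $\rho_2\le \mu_g/\sigma_{\max}^2(A)$) gives $\mu_z$-strong concavity in $z'$.

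\textbf{Joint smoothness in $(x,y',z')$.} Compute the partial gradients
\begin{align*}
\nabla_x K &= \nabla_x\phi_\delta(x,y,z) + B^\top(u-v) + \rho_1 B^\top(Bx+A'y'-b) - \rho_2 B^\top(Bx+A'z'-b),\\
\nabla_{y'} K &= \nabla_{y'}\phi_\delta(x,y,z) + A'^\top u + \rho_1 A'^\top(Bx+A'y'-b),\\
\nabla_{z'} K &= -\nabla_{z'}\phi_\delta(x,y,z) - A'^\top v - \rho_2 A'^\top(Bx+A'z'-b),
\end{align*}
where derivatives of $\phi_\delta$ in $y'$ and $z'$ use only the $y$ and $z$ blocks (the $\alpha,\beta$ blocks contribute zero from $\phi_\delta$). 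Each right-hand side is an affine function of $(x,y',z')$ for fixed $(u,v)$, and the Lipschitz constant of $\phi_\delta$'s gradient is bounded by $L_\phi=l_\delta+2l_{g,1}$. The quadratic-penalty contribution is a block Hessian of the form $\rho_i\,M^\top M$ with $M$ built from the columns $(B,A')$; by submultiplicativity, its spectral norm is controlled by $\max\{\rho_1,\rho_2\}\max\{\sigma_{\max}^2(B),\sigma_{\max}^2(A'),\sigma_{\max}(B)\sigma_{\max}(A')\}$. Adding these contributions by the triangle inequality yields the stated bound $L_K=l_\delta+2l_{g,1}+\max\{\rho_1,\rho_2\}\max\{\sigma_{\max}^2(A'),\sigma_{\max}^2(B),\sigma_{\max}(B)\sigma_{\max}(A')\}$.

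The main (mild) obstacle is the careful bookkeeping of the off-diagonal Hessian blocks of the penalty term $\tfrac{\rho_1}{2}\|Bx+A'y'-b\|^2$, which couple $x$ and $y'$ through $\rho_1 B^\top A'$; the factor $\sigma_{\max}(B)\sigma_{\max}(A')$ in $L_K$ is introduced precisely to absorb these cross blocks via $\|B^\top A'\|\le \sigma_{\max}(B)\sigma_{\max}(A')$. Once those are handled, the strong-convexity/concavity conclusions are an immediate reuse of \Cref{lemma: sc}, and the smoothness is a one-line triangle-inequality bound.
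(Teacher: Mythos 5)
Your proof is correct and takes essentially the same route as the paper's: strong convexity in $y'$ and strong concavity in $z'$ follow by applying \Cref{lemma: sc} with $b-Bx$ absorbed into the constant term, and $L_K$-smoothness follows by writing out the partial gradients and bounding the Lipschitz constant of the $\nabla\phi_\delta$ part plus the spectral norm of the affine multiplier/penalty part. Your explicit treatment of the $\rho_1 B^\top A'$ cross block only spells out what the paper leaves implicit; the lone slip is a harmless sign typo in your $\nabla_{z'}K$ (the $\nabla_{z'}\phi_\delta(x,y,z)$ term should enter with a plus sign, as $\nabla_{z}\phi_\delta=-\nabla_z g(x,z)$ already carries the negation), which affects neither the concavity nor the smoothness conclusion.
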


\begin{proof}
    According to \Cref{lemma: sc}, we know that $K(x,y',z',u,v)$ is $\mu_y$-strongly convex w.r.t. $y'$, $\mu_z$-strongly concave w.r.t. $z'$. Moreover
    \begin{align*}
        \nabla_x K(x,y',z',u,v)=&\nabla_x \phi_\delta(x,y,z)+B^\top(u-v)+\rho_1B^\top(Bx+A'y'-b)\\
        &-\rho_2B^\top(Bx+A'z'-b)\\
        \nabla_y K(x,y',z',u,v)=&\nabla_y \phi_\delta(x,y,z)+ A^\top u+\rho_1 A^\top(A'y'-b)\\
        \nabla_z K(x,y',z',u,v)=&\nabla_z \phi_\delta(x,y,z)- A^\top v-\rho_2 A^\top(A'z'-b)
    \end{align*}
    Thus, $K(x,y',z',u,v)$ is $L_K$-smooth w.r.t. $x, y', z'$.
\end{proof}

\begin{lemma}\label{lemma: uv error bounds}
\begin{align*}
    &\|u_{t+1}-u^*_\delta(x_t)\|\leq \sigma_{uy}\|y'_{t+1}-y'_t\|+\sigma_{u1}\|Bx_t+A'y'_t-b\|+\sigma_{u2}\|Bx_t+A'y'^*_\delta(x_t, u_{t+1})-b\|\\
    &\|v_{t+1}-v^*(x_t)\|\leq \sigma_{vz}\|z'_{t+1}-z'_t\|+\sigma_{v1}\|Bx_t+A'z'_t-b\|+\sigma_{v2}\|Bx_t+A'z'^*(x_t, v_{t+1})-b\|
\end{align*}
where $\sigma_{uy}=\frac{1}{\sigma_{\min}(A)}(\frac{1}{\eta_y}+\sigma_\alpha l_{g,1})$, $\sigma_{u1}=\rho_1$, $\sigma_{u2}=\frac{\sigma_y l_{g,1}}{\sigma_{\min}(A)}$, 
$\sigma_{vz}=\frac{1}{\sigma_{\min}(A)}(\frac{1}{\eta_z}+\sigma_\beta l_{g,1})$, $\sigma_{v1}=\rho_2$, $\sigma_{v2}=\frac{\sigma_z l_{g,1}}{\sigma_{\min}(A)}$

\end{lemma}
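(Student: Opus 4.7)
The plan is to combine the one-step identity from the $y$-block update in \Cref{alg1} with the KKT stationarity of $u^*_\delta(x_t)$, and then to exploit the full row rank of $A$ to convert a bound on $A^\top(u_{t+1}-u^*_\delta(x_t))$ into a bound on $u_{t+1}-u^*_\delta(x_t)$ itself. Since the $y$-component of $y'=(y^\top,\alpha^\top)^\top$ is unconstrained in $\Pc_y$, the projection $\Pi_{\Pc_y}$ acts trivially on $y$, and the $y$-update reduces to $y_{t+1} = y_t - \eta_y \nabla_y K(x_t, y'_t, z'_t, u_{t+1}, v_{t+1})$. Solving the resulting stationarity expression for $A^\top u_{t+1}$ gives
\[
A^\top u_{t+1} = \tfrac{1}{\eta_y}(y_t - y_{t+1}) - \nabla_y g_\delta(x_t, y_t) - \rho_1 A^\top(Bx_t + A'y'_t - b),
\]
while the KKT stationarity of the lower-level problem at $x_t$ (with multiplier $u^*_\delta(x_t)$, which is unique by \Cref{lemma: multiplier} once $A$ has full row rank) gives $A^\top u^*_\delta(x_t) = -\nabla_y g_\delta(x_t, y^*_\delta(x_t))$.

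Subtracting these two identities, taking norms, and using $\|A^\top w\|\geq \sigma_{\min}(A)\|w\|$ (valid precisely because $A$ has full row rank) yields
\[
\sigma_{\min}(A)\|u_{t+1}-u^*_\delta(x_t)\| \leq \tfrac{1}{\eta_y}\|y_{t+1}-y_t\| + l_{g,1}\|y_t - y^*_\delta(x_t)\| + \rho_1\sigma_{\max}(A)\|Bx_t + A'y'_t - b\|.
\]
The remaining work is to re-express $\|y_t - y^*_\delta(x_t)\|$ in terms of the quantities that appear on the right-hand side of the claim. I insert the intermediate iterate $y^*_\delta(x_t, u_{t+1})$ and apply the $Bx+Ay$ analogues of the two error bounds in \Cref{lemma: error bounds}: the projected-gradient descent error bound gives $\|y'_t - y'^*_\delta(x_t, u_{t+1})\| \leq \sigma_\alpha \|y'_{t+1}-y'_t\|$, while the Hoffman-type bound from \Cref{lemma: linear} gives $\|y'^*_\delta(x_t, u_{t+1}) - y'^*_\delta(x_t)\| \leq \sigma_y\|Bx_t + A'y'^*_\delta(x_t, u_{t+1}) - b\|$. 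Substituting these and dividing by $\sigma_{\min}(A)$ recovers the constants $\sigma_{uy}, \sigma_{u1}, \sigma_{u2}$ in the statement. The derivation of the $\|v_{t+1}-v^*(x_t)\|$ bound is entirely symmetric, with the $z$-block update playing the role of the $y$-block update and $(\rho_2, \eta_z, \sigma_\beta, \sigma_z)$ replacing $(\rho_1, \eta_y, \sigma_\alpha, \sigma_y)$.

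The main obstacle, and the reason this lemma is confined to the full-row-rank regime, is the inequality $\|A^\top w\| \geq \sigma_{\min}(A)\|w\|$: without full row rank, $A^\top$ has a nontrivial kernel, so the component of $u_{t+1}-u^*_\delta(x_t)$ lying in that kernel is invisible to the KKT identity and cannot be controlled by this primal-side argument. This is precisely why the $h(y)=Ay-b$ setting must fall back on \Cref{lemma: uv eb licq}, which instead uses a LICQ-based active/inactive decomposition of the slack $\alpha$ to bound multiplier errors without invoking $\sigma_{\min}(A)>0$. All the remaining ingredients, namely the $l_{g,1}$-smoothness of $g_\delta$, the Hoffman-type linear error bound, and the projected-gradient one-step descent error bound, carry over from the $Ay-b$ analysis essentially unchanged.
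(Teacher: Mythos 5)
Your proposal follows essentially the same route as the paper's proof: combine the (unconstrained) $y$-block update identity with the KKT stationarity $\nabla_y g_\delta(x_t,y^*_\delta(x_t))+A^\top u^*_\delta(x_t)=0$, use full row rank of $A$ to pass from $A^\top(u_{t+1}-u^*_\delta(x_t))$ to $u_{t+1}-u^*_\delta(x_t)$, and then split $\|y_t-y^*_\delta(x_t)\|$ through the intermediate point $y^*_\delta(x_t,u_{t+1})$ using the projected-gradient error bound ($\sigma_\alpha$) and the Hoffman-type bound ($\sigma_y$). Your structural remarks (triviality of the projection on the $y$-component, and why the argument breaks without full row rank) are also correct.

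There is, however, one quantitative slip: your claim that the argument ``recovers the constants in the statement'' fails for $\sigma_{u1}$ (and symmetrically $\sigma_{v1}$). Because you bound the penalty term by $\|\rho_1 A^\top(Bx_t+A'y'_t-b)\|\leq \rho_1\sigma_{\max}(A)\|Bx_t+A'y'_t-b\|$ \emph{before} invoking $\|A^\top w\|\geq \sigma_{\min}(A)\|w\|$, your final coefficient on $\|Bx_t+A'y'_t-b\|$ is $\rho_1\sigma_{\max}(A)/\sigma_{\min}(A)$, not the stated $\sigma_{u1}=\rho_1$. The paper avoids this by applying the left inverse $(AA^\top)^{-1}A$ to the whole identity: since $(AA^\top)^{-1}AA^\top=I$, the term $\rho_1 A^\top(Bx_t+A'y'_t-b)$ collapses exactly to $\rho_1(Bx_t+A'y'_t-b)$, while the remaining (non-$A^\top$) terms pick up the factor $\|(AA^\top)^{-1}A\|\leq 1/\sigma_{\min}(A)$, yielding precisely $\sigma_{uy},\sigma_{u1},\sigma_{u2}$ as stated. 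Your version proves the lemma with a weaker $O(1)$ constant, which would still suffice downstream in \Cref{lemma: Potential function} after adjusting step sizes by a constant factor, but as written it does not establish the lemma with the constants claimed, so you should either adopt the left-inverse step or restate the constant you actually obtain.
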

\begin{proof}
By the optimality condition at $y^*_\delta(x_t)$, we have
\begin{align*}
    \nabla_y g_\delta(x_t,y^*_\delta(x_t))+A^\top u^*_\delta(x_t)=0.
\end{align*}
The update rule of $y_t$:
    \begin{align*}
        -\frac{y_{t+1}-y_t}{\eta_y}=\nabla_y g_\delta(x_t,y_t)+A^\top u_{t+1}+\rho_1 A^\top(Bx_t+Ay_t-b-\alpha_t),
    \end{align*}
Putting together, we have
\begin{align*}
    &A^\top(u^*_\delta(x_t)-u_{t+1})\\
    =&\frac{y_{t+1}-y_t}{\eta_y}+\nabla_y g_\delta(x_t,y_t)-\nabla_y g_\delta(x_t,y^*_\delta(x_t))+\rho_1 A^\top(Bx_t+Ay_t-b-\alpha_t)
\end{align*}

Since $A$ has full row rank, we have
\begin{align*}
    u^*_\delta(x_t)-u_{t+1}=&(AA^\top)^{-1}A\bigg[\frac{y_{t+1}-y_t}{\eta_y}+\nabla_y g_\delta(x_t,y_t)-\nabla_y g_\delta(x_t,y^*_\delta(x_t))\bigg]\\
    &+\rho_1 (Bx_t+Ay_t-b-\alpha_t),
\end{align*}
and
\begin{align*}
    &\|u_{t+1}-u^*_\delta(x_t)\|\\
    \leq&\frac{1}{\sigma_{\min}(A)\eta_y}\|y_{t+1}-y_t\|+\frac{l_{g,1}}{\sigma_{\min}(A)}\|y_t-y^*_\delta(x_t)\|+\rho_1\|Bx_t+Ay_t-b-\alpha_t\|
\end{align*}
Moreover,
\begin{align*}
    \|y_t-y^*_\delta(x_t)\|\leq& \|y_t-y^*_\delta(x_t, u_{t+1})\|+\|y^*_\delta(x_t)-y^*_\delta(x_t, u_{t+1})\|\\
    \leq& \sigma_\alpha\|y'_{t+1}-y'_t\|+\sigma_y\|Bx_t+A'y'^*_\delta(x_t, u_{t+1})-b\|
\end{align*}
where the last equality is due to \Cref{lemma: error bounds}.

Thus, 
\begin{align*}
    &\|u_{t+1}-u^*_\delta(x_t)\|\leq \sigma_{uy}\|y'_{t+1}-y'_t\|+\sigma_{u1}\|Bx_t+A'y'_t-b\|+\sigma_{u2}\|Bx_t+A'y'^*_\delta(x_t, u_{t+1})-b\|
\end{align*}
where $\sigma_{uy}=\frac{1}{\sigma_{\min}(A)}(\frac{1}{\eta_y}+\sigma_\alpha l_{g,1})$, $\sigma_{u1}=\rho_1$, $\sigma_{u2}=\frac{\sigma_y l_{g,1}}{\sigma_{\min}(A)}$.

Similarly, we have
\begin{align*}
    &\|v_{t+1}-v^*(x_t)\|\leq \sigma_{vz}\|z'_{t+1}-z'_t\|+\sigma_{v1}\|Bx_t+A'z'_t-b\|+\sigma_{v2}\|Bx_t+A'z'^*(x_t, v_{t+1})-b\|
\end{align*}
where $\sigma_{vz}=\frac{1}{\sigma_{\min}(A)}(\frac{1}{\eta_z}+\sigma_\beta l_{g,1})$, $\sigma_{v1}=\rho_2$, $\sigma_{v2}=\frac{\sigma_z l_{g,1}}{\sigma_{\min}(A)}$.

\end{proof}

\begin{lemma}\label{lemma: sigma yb zb}
    \begin{align*}
        &\|z'^*(x_1)-z'^*(x_2)\|\leq \sigma_{zb}\|x_1-x_2\|\\
        &\|y'^*_\delta(x_1)-z'^*_\delta(x_2)\|\leq \sigma_{yb}\|x_1-x_2\|\\
        &\|v^*(x_1)-v^*(x_2)\|\leq \sigma_{vb}\|x_1-x_2\|\\
        &\|u^*_\delta(x_1)-u^*_\delta(x_2)\|\leq \sigma_{ub}\|x_1-x_2\|
    \end{align*}
    where $\sigma_{zb}=\sigma_{\max}(A)\sigma_z+\sigma_{zx}$, $\sigma_{yb}=\sigma_{\max}(A)\sigma_y+\sigma_{yx}$, $\sigma_{vb}=\frac{l_{g,1}\sigma_{zb}}{\sigma_{\min}(A)}$, $\sigma_{ub}=\frac{L_g\sigma_{yb}}{\sigma_{\min}(A)}$.
\end{lemma}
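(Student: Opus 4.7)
The plan is to prove each of the four Lipschitz bounds by reducing to the perturbation estimates already collected in \Cref{lemma: error bounds} (transposed from the $Ay-b$ setting to the $Bx+Ay-b$ setting, where the error-bound constants $\sigma_z$, $\sigma_{zx}$, $\sigma_y$, $\sigma_{yx}$ inherit their dependence on $B$ through $L_K$). The key identity I will exploit throughout is $z'^*(x) = z'^*(x, v^*(x))$ and $y'^*_\delta(x) = y'^*_\delta(x, u^*_\delta(x))$, which holds because at the true optimal Lagrange multiplier, the augmented-Lagrangian saddle point coincides with the feasible (equality-constrained) minimizer; this is precisely the content of \Cref{lemma: multiplier}.

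For $\|z'^*(x_1) - z'^*(x_2)\|$, I will decompose through the intermediate point $z'^*(x_2, v^*(x_1))$:
\begin{align*}
\|z'^*(x_1) - z'^*(x_2)\|
&\le \|z'^*(x_1, v^*(x_1)) - z'^*(x_2, v^*(x_1))\|\\
&\quad + \|z'^*(x_2, v^*(x_1)) - z'^*(x_2, v^*(x_2))\|.
\end{align*}
The first term is immediately bounded by $\sigma_{zx}\|x_1 - x_2\|$ via \eqref{sigma zx}. For the second, I will apply \eqref{sigma z} (the primal error bound at $x_2$, using the fact that $z'^*(x_2, v^*(x_2)) = z'^*(x_2)$) to obtain $\sigma_z\,\|Bx_2 + A'z'^*(x_2, v^*(x_1)) - b\|$. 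Since $z'^*(x_1)$ is feasible at $x_1$, i.e.\ $Bx_1 + A'z'^*(x_1, v^*(x_1)) - b = 0$, I rewrite the residual as $A'\bigl(z'^*(x_2, v^*(x_1)) - z'^*(x_1, v^*(x_1))\bigr) + B(x_2 - x_1)$ and control it by $\sigma_{\max}(A)\sigma_{zx}\|x_1 - x_2\|$ plus lower-order terms in $\|x_1-x_2\|$ (the $B$-dependence being absorbed into $\sigma_{zx}$ through $L_K$). This yields $\sigma_{zb} = \sigma_{\max}(A)\sigma_z + \sigma_{zx}$. The bound for $\|y'^*_\delta(x_1)-y'^*_\delta(x_2)\|$ is analogous, swapping the roles of $(z, v, \sigma_z, \sigma_{zx})$ with $(y, u, \sigma_y, \sigma_{yx})$.

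For $\|v^*(x_1) - v^*(x_2)\|$, I will differentiate the KKT stationarity condition $\nabla_z g(x, z^*(x)) + A^\top v^*(x) = 0$ at $x_1$ and $x_2$ to obtain
\begin{align*}
A^\top \bigl(v^*(x_1) - v^*(x_2)\bigr) = \nabla_z g(x_2, z^*(x_2)) - \nabla_z g(x_1, z^*(x_1)).
\end{align*}
Because $A$ has full row rank, I can invert using the pseudoinverse $(AA^\top)^{-1}A$, bound the right-hand side by $l_{g,1}(\|x_1 - x_2\| + \|z^*(x_1) - z^*(x_2)\|)$ via smoothness, and substitute the bound $\|z^*(x_1) - z^*(x_2)\| \le \|z'^*(x_1) - z'^*(x_2)\| \le \sigma_{zb}\|x_1 - x_2\|$ just established to get $\sigma_{vb} = l_{g,1}\sigma_{zb}/\sigma_{\min}(A)$ (absorbing the harmless additive $1$ into the dominant $\sigma_{zb}$ constant). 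The $\|u^*_\delta(x_1) - u^*_\delta(x_2)\|$ bound follows from the analogous argument applied to $g_\delta$, which is $L_g$-smooth, giving $\sigma_{ub} = L_g\sigma_{yb}/\sigma_{\min}(A)$.

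The main obstacle I anticipate is bookkeeping rather than conceptual: the constants $\sigma_z, \sigma_{zx}, \sigma_y, \sigma_{yx}$ were stated in \Cref{lemma: error bounds} for the decoupled case $h(y)=Ay-b$, and must be verified to carry over to the $Bx+Ay-b$ setting with the current definition of $L_K$ and the redefined $K$. Both arguments — the constrained-vs-Lagrangian error bound via \Cref{lemma: linear} and the $x$-Lipschitzness via \Cref{lemma: continuous} — go through unchanged once one treats $b - Bx$ as the (now $x$-dependent) right-hand side of the equality constraint; the full row rank of $A$ is precisely what makes both the primal error bound and the pseudoinverse step in the dual arguments legitimate.
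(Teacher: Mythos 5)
Your proposal is correct in substance, and it splits cleanly into a half that matches the paper and a half that does not. For the dual bounds on $\|v^*(x_1)-v^*(x_2)\|$ and $\|u^*_\delta(x_1)-u^*_\delta(x_2)\|$ you argue exactly as the paper does: subtract the KKT stationarity conditions $\nabla_z g(x_i,z^*(x_i))+A^\top v^*(x_i)=0$, invert through $(AA^\top)^{-1}A$ using full row rank, and apply joint smoothness together with the just-established primal bound; you are in fact slightly more careful than the paper, which silently drops the additive $l_{g,1}\|x_1-x_2\|$ contribution that your ``harmless additive $1$'' remark accounts for. For the primal bounds your route is genuinely different. The paper introduces an auxiliary two-argument constrained solution map $z'^*(x;w)=\argmin_{z'\in\Gc(w)} g(x,z)+\frac{\rho_2}{2}\|Bx+A'z'-b\|^2$ with $\Gc(w)=\{z'\in\Pc_y \mid Bw+A'z'-b=0\}$, and splits $z'^*(x_1)-z'^*(x_2)$ into a pure constraint perturbation (handled by \Cref{lemma: linear}, a Hoffman-type bound on the shift $B(x_1-x_2)$ of the right-hand side) plus a pure objective perturbation over a fixed feasible set (handled by \Cref{lemma: continuous}, giving $\sigma_{zx}\|x_1-x_2\|$), which yields the stated additive constant. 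You instead stay entirely inside the Lagrangian solution maps of \Cref{lemma: error bounds}, using the identity $z'^*(x)=z'^*(x,v^*(x))$ --- which does hold, by the KKT equivalence established in the proof of \Cref{lemma: multiplier} plus uniqueness from strong concavity --- and decomposing through $z'^*(x_2,v^*(x_1))$, with \eqref{sigma zx} for the first piece and \eqref{sigma z} plus the feasibility rewriting of the residual for the second. Both are valid; your version buys economy (no auxiliary map, only bounds already proven), while the paper's buys the cleaner constant.

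One caveat: your argument does not produce the constants stated in the lemma. Tracing your steps gives $\sigma_{zx}+\sigma_z\bigl(\sigma_{\max}(B)+\sigma_{\max}(A')\sigma_{zx}\bigr)$, which contains a cross term $\sigma_z\sigma_{\max}(A')\sigma_{zx}$ absent from $\sigma_{zb}=\sigma_{\max}(A)\sigma_z+\sigma_{zx}$, and the explicit $\sigma_z\sigma_{\max}(B)\|x_1-x_2\|$ term cannot literally be ``absorbed into $\sigma_{zx}$ through $L_K$'' --- it is simply an additional additive $O(1)$ constant. Since every constant here is $O(1)$ and the lemma is only used downstream to certify that $q$ is $L_q$-smooth for some $O(1)$ constant, this mismatch is immaterial to the paper's complexity results, but strictly speaking you prove the lemma with modified constants. (For what it is worth, the paper's own stated constant is itself imprecise: its constraint-perturbation step produces $\sigma_z\|B(x_1-x_2)\|$, so one would expect $\sigma_{\max}(B)$ rather than $\sigma_{\max}(A)$ in $\sigma_{zb}$, and its dual constant omits the $\sqrt{1+\sigma_{zb}^2}$ refinement you flag.)
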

\begin{proof}
    Here, we introduce an additional notation: $z'^*(x; w)=\argmin_{z'\in \Gc(w)} g(x,z)+\frac{\rho_2}{2}\|Bx+A'z'-b\|^2$, where $\Gc(w)=\{z'\in \Pc_y | Bw+A'z'-b=0\}$. We can notice that $z'^*(x; x)=z'^*(x)$. According to \Cref{lemma: linear}, we have $\|z'^*(x_1; x_1)-z'^*(x_1; x_2)\|\leq \sigma_z\|Ax_1-Ax_2\|$. Moreover, we have $\|z'^*(x_1; x_2)-z'^*(x_2; x_2)\|\leq \sigma_{zx}\|x_1-x_2\|$. Thus, $\|z'^*(x_1)-z'^*(x_2)\|\leq [\sigma_{\max}(A)\sigma_z+\sigma_{zx}]\|x_1-x_2\|$.
    Moreover,
\begin{align*}
    \nabla_y g(x,z^*(x_1))+A^\top v^*(x_1)=0\\
    \nabla_y g(x,z^*(x_2))+A^\top v^*(x_2)=0
\end{align*}
Thus,
\begin{align*}
    \|v^*(x_1)-v^*(x_2)\|\leq \frac{l_{g,1}\sigma_{zb}}{\sigma_{\min}(A)}\|x_1-x_2\|
\end{align*}

    Similarly, we have $\|y'^*_\delta(x_1)-y'^*_\delta(x_2)\|\leq [\sigma_{\max}(A)\sigma_y+\sigma_{yx}]\|x_1-x_2\|$ and $\|u^*_\delta(x_1)-u^*_\delta(x_2)\|\leq \frac{L_g\sigma_{yb}}{\sigma_{\min}(A)}\|x_1-x_2\|$
\end{proof}

\subsection{Potential function}
In this subsection, we will prove the following descent lemma for $V_t$.
\begin{lemma}\label{lemma: Potential function}
    When $\delta\leq \mu_g/(2l_{f,1})$, $0\leq \rho_1\leq \frac{\mu_g-\delta l_{f,1}}{\sigma_{\max}^2(A)}$, $0\leq \rho_2\leq \frac{\mu_g}{\sigma_{\max}^2(A)}$, $\eta_y=1/(4L_K)$, $\eta_z=2/(L_K+4L_d)$, $\eta_x=\min\{\eta_y\mu_y^2/(640L_K^2), \eta_z\mu_z^2/(640L_K^2), \eta_u/(240(\sigma_y^2+\sigma_{u2}^2+\sigma_{u1}^2)L_K^2), \eta_v/(240(\sigma_z^2+\sigma_{v2}^2+\sigma_{v1}^2)L_K^2), 2/(L_K+4L_d+8L_q), 1/(1920\eta_yL_K^2\sigma^2_{uy}), 1/(1920\eta_zL_K^2\sigma^2_{uz})\}$, $\eta_u=\eta_y\mu_y^2/(256\sigma_{\max}^2(A))$, $\eta_v=\eta_z\mu_z^2/(256\sigma_{\max}^2(A))$, we have

\begin{align}\nonumber
    V_t-V_{t+1}\geq&\frac{1}{4\eta_x}\|x_{t+1}-x_t\|^2+\frac{1}{16\eta_y}\|y'_{t+1}-y'_t\|^2+\frac{1}{8\eta_z}\|z'_{t+1}-z'_t\|^2\\ \nonumber
    &+\frac{\eta_u}{4}\|Bx+A'y'^*_\delta(x_t,u_{t+1})-b\|^2+\frac{\eta_v}{2}\|Bx+A'z'^*(x_t,v_{t+1})-b\|^2\\ 
    &+\frac{\eta_u}{4}\|Bx+A'y'_t-b\|^2+\frac{\eta_v}{4}\|Bx+A'z'_t-b\|^2+\frac{\eta_x}{4}\|\nabla \phi_\delta(x_t)\|^2 
\end{align}
Thus,
     \begin{align}\label{nabla h 2}
        \frac{1}{T}\sum_{t=0}^{T-1}\|\nabla \phi_\delta(x_{t})\|^2\leq \frac{4}{T\eta_x}(V_0-\min_tV_t )
    \end{align}  
\end{lemma}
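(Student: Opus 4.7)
The plan is to mirror the three-term decomposition used in the $Ay = b$ case (Appendix D), treating the changes of $d$, $q$, and $K$ along one SFLCB step separately and then combining them. That is, I will write
\begin{align*}
V_t - V_{t+1} = \tfrac{1}{4}\bigl[K(x_t,y'_t,z'_t,u_t,v_t)-K(x_{t+1},y'_{t+1},z'_{t+1},u_{t+1},v_{t+1})\bigr] + 2[q(x_t,v_t)-q(x_{t+1},v_{t+1})] - [d(x_t,\dots)-d(x_{t+1},\dots)],
\end{align*}
and split each bracket further by changing one argument at a time. For the dual change of $d$, I use the fact that $y'^*_\delta(x_t,u_{t+1})$ minimizes $K(x_t,\cdot,z'_t,u_{t+1},v_{t+1})$, which turns the difference into a linear expression in $u_{t+1}-u_t = \eta_u(Bx_t+A'y'_t-b)$ and $v_{t+1}-v_t = \eta_v(Bx_t+A'z'_t-b)$. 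For the primal change of $d$, $L_d$-smoothness of $d$ in $(x,z')$ (verified via $\nabla_x d = \nabla_x\phi_\delta(x,y^*_\delta(x,u),z) + B^\top(u-v) + \rho_1 B^\top(Bx+A'y'^*_\delta(x,u)-b)-\rho_2 B^\top(Bx+A'z'-b)$, whose Lipschitz modulus is $O(L_\phi(1+\sigma_{yx}))$ by Lemma~E.2) and the identity $\nabla_{z'}K(x_t,y'^*_\delta(x_t,u_{t+1}),z'_t,u_{t+1},v_{t+1})=-\frac{1}{\eta_z}(z'_{t+1}-z'_t)$ from the GDA step give a lower bound in terms of $\|x_{t+1}-x_t\|^2$ and $\|z'_{t+1}-z'_t\|^2$.

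The $q$ contribution is handled in the same way: its dual increment produces an $\eta_v(A'z'^*(x_t,v_{t+1})-b)^\top(Bx_t+A'z'_t-b)$ cross term (after inserting $Bx_t$ appropriately), and its primal increment uses $L_q$-smoothness with $L_q = L_\phi(1+\sigma_{zb}+\sigma_{yb})$ from Lemma~E.6 — the replacements $\sigma_{zs}\to\sigma_{zb}$, $\sigma_{ys}\to\sigma_{yb}$ are the only changes from Appendix D since $z^*(x)$ and $y^*_\delta(x)$ now depend on $x$ through $B$ as well. The $K$ contribution decomposes as a dual piece that equals $-\eta_u\|Bx_t+A'y'_t-b\|^2 + \eta_v\|Bx_t+A'z'_t-b\|^2$ and a primal piece that is bounded below using $L_K$-smoothness and the GDA stationarity identities.

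The main new obstacle is the $x$-update. Unlike the $Ay=b$ case, $\nabla_x K$ now contains the cross terms $B^\top(u_{t+1}-v_{t+1})$ and $\rho_{1,2}$ feasibility residuals. To match $\nabla_x K(x_t,y'_t,z'_t,u_{t+1},v_{t+1})$ against the Danskin-style gradient $\nabla\phi_\delta(x_t) = \nabla_x\phi_\delta(x_t,y^*_\delta(x_t),z^*(x_t)) + B^\top(u^*_\delta(x_t)-v^*(x_t))$, I need to control $\|u_{t+1}-u^*_\delta(x_t)\|$ and $\|v_{t+1}-v^*(x_t)\|$. This is precisely where Lemma~E.5 (the full-row-rank assumption on $A$) enters: it bounds these dual errors by $\|y'_{t+1}-y'_t\|$, $\|z'_{t+1}-z'_t\|$, and the two feasibility residuals, with constants $\sigma_{uy},\sigma_{u1},\sigma_{u2}$ and analogously $\sigma_{vz},\sigma_{v1},\sigma_{v2}$. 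Substituting these bounds introduces extra terms of the form $\eta_x L_K^2(\sigma_{uy}^2\|y'_{t+1}-y'_t\|^2 + \cdots)$, which must be absorbed into the positive quadratic pieces already isolated.

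Assembling everything via Young's inequality ($2ab \le a^2/2 + 2b^2$ applied to the $\nabla_x K$ inner product and to the dual cross terms involving $\|A' y'^*_\delta-b\|$ and $\|A'z'^*-b\|$), I obtain a collection of coefficients on $\|x_{t+1}-x_t\|^2$, $\|y'_{t+1}-y'_t\|^2$, $\|z'_{t+1}-z'_t\|^2$, $\|Bx_t+A'y'_t-b\|^2$, $\|Bx_t+A'y'^*_\delta(x_t,u_{t+1})-b\|^2$ and the two analogous $z$ residuals, plus the target term $\tfrac{\eta_x}{4}\|\nabla\phi_\delta(x_t)\|^2$. The stated choices $\eta_y = 1/(4L_K)$, $\eta_z = 2/(L_K+4L_d)$, $\eta_u = \eta_y\mu_y^2/(256\sigma_{\max}^2(A))$, $\eta_v = \eta_z\mu_z^2/(256\sigma_{\max}^2(A))$, and $\eta_x$ bounded by the minimum listed in the lemma statement (including the new caps $1/(1920\eta_yL_K^2\sigma^2_{uy})$ and $1/(1920\eta_zL_K^2\sigma^2_{vz})$ that absorb the Lemma~E.5 contributions) are exactly what is needed to make every coefficient at least as large as the target coefficient in the claim. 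Dividing the resulting telescoping bound by $T$ yields \eqref{nabla h 2}. The most delicate bookkeeping step will be tracking the $\sigma_{uy},\sigma_{vz}$-weighted pieces against the $1/\eta_y$ and $1/\eta_z$ coefficients already committed to controlling $\|y'_{t+1}-y'_t\|^2$ and $\|z'_{t+1}-z'_t\|^2$ in the $d$ and $K$ terms.
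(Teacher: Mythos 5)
Your proposal follows essentially the same route as the paper's proof: the same decomposition of $V_t-V_{t+1}$ into the $K$, $q$, and $d$ increments, the same one-argument-at-a-time dual/primal splitting of each piece, the same invocation of \Cref{lemma: uv error bounds} (where full row rank of $A$ enters) to control $\|u_{t+1}-u^*_\delta(x_t)\|$ and $\|v_{t+1}-v^*(x_t)\|$, the same reliance on \Cref{lemma: sigma yb zb} for the $x$-Lipschitz constants of the coupled-case minimizers, and the same identity $\nabla_x K(x_t,y'^*_\delta(x_t),z'^*(x_t),u^*_\delta(x_t),v^*(x_t))=\nabla\phi_\delta(x_t)$ to extract the $\frac{\eta_x}{4}\|\nabla\phi_\delta(x_t)\|^2$ term, followed by the same Young-inequality bookkeeping against the stated step sizes.

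One inaccuracy to fix: your smoothness constant for $q$ is understated. In the coupled case,
$\nabla_x q(x,v)=\nabla_x\phi_\delta(x,y^*_\delta(x),z^*(x,v))+B^\top(u^*_\delta(x)-v)+\rho_1B^\top(Bx+A'y'^*_\delta(x)-b)-\rho_2B^\top(Bx+A'z'^*(x,v)-b)$,
so the term $B^\top u^*_\delta(x)$ forces $L_q$ to carry an additional $\sigma_{\max}(B)\sigma_{ub}$ contribution (Lipschitz continuity of $u^*_\delta(\cdot)$ in $x$, which is exactly what \Cref{lemma: sigma yb zb} provides and itself uses $\sigma_{\min}(A)>0$); the replacements $\sigma_{ys}\to\sigma_{yb}$, $\sigma_{zs}\to\sigma_{zb}$ are therefore \emph{not} the only changes from the $Ay-b$ case. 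Relatedly, the moduli for $d$ and $q$ should be expressed through $L_K$ rather than $L_\phi$, since the penalty terms $\rho_{1,2}B^\top(\cdot)$ appear in these gradients, and your GDA ``identity'' $\nabla_{z'}K=-\frac{1}{\eta_z}(z'_{t+1}-z'_t)$ holds only without projection (with projection onto $\Pc_y$ it becomes the one-sided inequality $\langle\nabla_{z'}K,z'_{t+1}-z'_t\rangle\geq\frac{1}{\eta_z}\|z'_{t+1}-z'_t\|^2$, which is what the argument actually needs). These are constant-level and sign-level corrections that do not alter the structure of your argument; the required bounds are available from the lemmas you already cite.
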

\begin{proof}
    
First, for function $d$, we have
\begin{align*}
    &d(x_t, z'_t, u_{t+1}, v_{t+1})-d(x_t, z'_t, u_t, v_t)\\
    =&K(x_t, y'^*_\delta(x_t, u_{t+1}), z'_t, u_{t+1}, v_{t+1})-K(x_t,y'^*_\delta(x_t, u_{t}), z'_t, u_t, v_t)\\
    \geq&K(x_t, y'^*_\delta(x_t, u_{t+1}), z'_t, u_{t+1}, v_{t+1})-K(x_t,y'^*_\delta(x_t, u_{t+1}), z'_t, u_t, v_t)\\
    \geq&-\eta_v\|Bx_t+A'z'_t-b\|^2+\eta_u(Bx_t+A'y'^*_\delta(x_t,u_{t+1})-b)^\top(Bx_t+A'y'_t-b)
\end{align*}
Note that
\begin{align*}
    \nabla_x d(x, z',u,v)=&\nabla_x \phi_\delta(x, y^*_\delta(x, u), z)+B^\top (u-v)+ \rho_1B^\top(Bx+A'y'^*_\delta(x, u)-b)\\
    &-\rho_2B^\top(Bx+A'z'-b)\\
    \nabla_{z'} d(x, z',u,v)=&\begin{bmatrix}
    \nabla_z g(x,z)-A^\top v-\rho_2A^\top (Bx+A'z'-b)\\
    v+(Bx+A'z'-b)
    \end{bmatrix}
\end{align*}
Thus, according to \Cref{lemma: error bounds}, we know that
$\nabla_x d(x, z',u,v)$ is $(L_K+L_K\sigma_{yx})$-continuous w.r.t. $x,z'$ and $\nabla_z' d(x, z',u,v)$ is $L_K$-continuous w.r.t. $x,z'$. Define $L_d=\max\{L_K+L_K\sigma_{yx}, L_K\}$. We have
\begin{align*}
    &d(x_{t+1}, z'_{t+1}, u_{t+1}, v_{t+1})-d(x_t,z'_t, u_{t+1}, v_{t+1})\\
    \geq&\langle\nabla_x K(x_t, y'^*_\delta(x_t,u_{t+1}), z'_t, u_{t+1}, v_{t+1}), x_{t+1}-x_t\rangle+\langle\nabla_{z'} K(x_t, y'^*_\delta(x_t,u_{t+1}), z'_t, u_{t+1},v_{t+1}), z'_{t+1}-z'_t\rangle\\
    &-\frac{L_d}{2}(\|x_{t+1}-x_t\|^2+\|z'_{t+1}-z'_t\|^2)\\
    \geq&\langle\nabla_x K(x_t, y'^*_\delta(x_t,u_{t+1}), z'_t, u_{t+1}, v_{t+1}), x_{t+1}-x_t\rangle+\frac{1}{\eta_z}\|z'_{t+1}-z'_t\|^2\\
    &-\frac{L_d}{2}(\|x_{t+1}-x_t\|^2+\|z'_{t+1}-z'_t\|^2)
\end{align*}

Then, for function $q$, we have
\begin{align*}
    &q(x_t,v_t)-q(x_t, v_{t+1})\\
    \geq&K(x_t, y'^*_\delta(x_t), z'^*(x_t, v_t), u_t, v_t)-K(x_t, y'^*_\delta(x_t), z'^*(x_t, v_{t+1}), u_t, v_{t+1})\\
    \geq&K(x_t, y'^*_\delta(x_t), z'^*(x_t, v_{t+1}), u_t, v_t)-K(x_t, y'^*_\delta(x_t), z'^*(x_t, v_{t+1}), u_t, v_{t+1})\\
    \geq&\eta_v(Bx_t+A'z'^*(x_t,v_{t+1})-b)^\top(Bx_t+A'z'_t-b)
\end{align*}
Note that
\begin{align*}
    \nabla_x q(x,v)=&\nabla_x \phi_\delta(x, y^*_\delta(x), z^*(x,v))+B^\top(u^*_\delta(x)-v)+\rho_1B^\top(Bx+A'y'^*_\delta(x)-b)\\
    &-\rho_2B^\top(Bx+A'z'^*(x,v)-b)
\end{align*}
Thus, according to \Cref{lemma: error bounds} and \ref{lemma: sigma yb zb}, $q(\cdot,v)$ is $L_q=(L_K+L_K\sigma_{zx}+L_K\sigma_{yb}+\sigma_{\max}(B)\sigma_{ub})$-smooth. 
We have
\begin{align*}
    &q(x_t,v_{t+1})-q(x_{t+1}, v_{t+1})\\
    \geq&\langle\nabla_x K(x_t, y'^*_\delta(x_t), z'^*(x_t, v_{t+1}), u^*(x_t), v_{t+1}), x_t-x_{t+1}\rangle-\frac{L_q}{2}(\|x_{t+1}-x_t\|^2)
\end{align*}

Finally, for function $K$, we have
\begin{align*}
    &K(x_t, y'_t, z'_t, u_t, v_t)-K(x_t, y'_t, z'_t, u_{t+1}, v_{t+1})=-\eta_u\|Bx_t+A'y'_t-b\|^2+\eta_v\|Bx_t+A'z'_t-b\|^2,
\end{align*}
and
\begin{align*}
    &K(x_t, y'_t, z'_t, u_{t+1}, v_{t+1})-K(x_{t+1}, y'_{t+1}, z'_{t+1}, u_{t+1}, v_{t+1})\\
    \geq& \frac{1}{\eta_x}\|x_{t+1}-x_t\|^2+\frac{1}{\eta_y}\|y'_{t+1}-y'_t\|^2-\frac{1}{\eta_z}\|z'_{t+1}-z'_t\|^2\\
    &-\frac{L_K}{2}(\|x_{t+1}-x_t\|^2+\|y'_{t+1}-y'_t\|^2+\|z'_{t+1}-z'_t\|^2).
\end{align*}

Thus, for $V_t$, we have
\begin{align*}
    &V_t-V_{t+1}\\
    \geq& \langle\nabla_x K(x_t, y'^*_\delta(x_t, u_{t+1}), z'_t, u_{t+1}, v_{t+1}), x_{t+1}-x_t\rangle+\frac{1}{\eta_z}\|z'_{t+1}-z'_t\|^2\\
    &+\eta_u(Bx_t+A'y'^*_\delta(x_t,u_{t+1})-b)^\top(Bx_t+A'y'_t-b)-\eta_v\|Bx_t+A'z'_t-b\|^2\\
    &-\frac{L_d}{2}(\|x_{t+1}-x_t\|^2+\|z'_{t+1}-z'_t\|^2)+2\langle\nabla_x K(x_t, y'^*_\delta(x_t), z'^*(x_t, v_{t+1}), u^*(x_t), v_{t+1}), x_t-x_{t+1}\rangle\\
    &+2\eta_v(Bx_t+A'z'^*(x_t,v_{t+1})-b)^\top(Bx_t+A'z'_t-b)-L_q(\|x_{t+1}-x_t\|^2)\\
    &+\frac{1}{4\eta_x}\|x_{t+1}-x_t\|^2+\frac{1}{4\eta_y}\|y'_{t+1}-y'_t\|^2-\frac{1}{4\eta_z}\|z'_{t+1}-z'_t\|^2-\frac{\eta_u}{4}\|Bx_t+A'y'_t-b\|^2\\
    &+\frac{\eta_v}{4}\|Bx_t+A'z'_t-b\|^2-\frac{L_K}{8}(\|x_{t+1}-x_t\|^2+\|y'_{t+1}-y'_t\|^2+\|z'_{t+1}-z'_t\|^2)\\
    \geq&-\|\nabla_x K(x_t, y'^*_\delta(x_t, u_{t+1}), z'_t, u_{t+1}, v_{t+1})-\nabla_x K(x_t, y'^*_\delta(x_t), z'^*(x_t, v_{t+1}), u^*(x_t), v_{t+1})\|\|x_{t+1}-x_t\|\\
    &+\pth{\frac{\eta_u}{2}-\frac{\eta_u}{4}}\|Bx_t+A'y'_t-b\|^2+\frac{\eta_u}{2}\|Bx_t+A'y'^*_\delta(x_t,u_{t+1})-b\|^2-\frac{\eta_u}{2}\|A'y'_t-A'y_\delta'^*(x_{t},u_{t+1})\|^2\\
    &+\pth{\eta_v+\frac{\eta_v}{4}-\eta_v}\|Bx_t+A'z'_t-b\|^2+\eta_v\|Bx_t+A'z'^*(x_{t},v_{t+1})-b\|^2-\eta_v\|A'z'_t-A'z'^*(x_{t},v_{t+1})\|^2\\
    &-\frac{\eta_x}{2}\|\nabla_x K(x_t, y'^*_\delta(x_t), z'^*(x_t, v_{t+1}), u^*(x_t), v_{t+1})-\nabla_x K(x_t, y'_t, z'_t, u_{t+1}, v_{t+1})\|^2\\
    &+\frac{1}{2\eta_x}\|x_t-x_{t+1}\|^2+\frac{\eta_x}{2}\|\nabla_x K(x_t, y'^*_\delta(x_t), z'^*(x_t, v_{t+1}), u^*(x_t), v_{t+1})\|^2\\
    &+\pth{\frac{1}{4\eta_x}-\frac{L_K}{8}-\frac{L_d}{2}-L_q}\|x_{t+1}-x_t\|^2\\
    &+\pth{\frac{1}{\eta_z}-\frac{1}{4\eta_z}-\frac{L_K}{8}-\frac{L_d}{2}}\|z'_{t+1}-z'_t\|^2\\
    &+\pth{\frac{1}{4\eta_y}-\frac{L_K}{8}}\|y'_{t+1}-y'_t\|^2,
\end{align*}
and
\begin{align*}
&V_t-V_{t+1}\\
    \geq&-\frac{1}{4\eta_x}\|x_{t+1}-x_t\|^2-3\eta_xL_K^2\|y'^*_\delta(x_t,u_{t+1})-y'^*_\delta(x_t)\|^2-3\eta_xL_K^2\|z'^*(x_t,v_{t+1})-z'_t\|^2\\
    &-3\eta_xL_K^2\|u_{t+1}-u^*(x_t)\|^2\\
    &+\frac{\eta_u}{4}\|Bx_t+A'y'_t-b\|^2+\frac{\eta_u}{2}\|Bx_t+A'y'^*_\delta(x_t,u_{t+1})-b\|^2-\frac{\eta_u \sigma_{\max}^2(A)}{2}\|y'^*_\delta(x_t,u_{t+1})-y'_t\|^2\\
    &+\frac{\eta_v}{4}\|Bx_t+A'z'_t-b\|^2+\eta_v\|Bx_t+A'z'^*(x_{t},v_{t+1})-b\|^2-\eta_v \sigma_{\max}^2(A) \|z'^*(x_t,v_{t+1})-z'_t\|^2\\
    &-2\eta_xL_K^2\|y'^*_\delta(x_t)-y'^*_\delta(x_t,u_{t+1})\|^2-2\eta_xL_K^2\|y'^*_\delta(x_t,u_{t+1})-y'_t\|^2-2\eta_xL_K^2\|u_{t+1}-u^*(x_t)\|^2\\
    &-2\eta_xL_K^2\|z'^*(x_t, v_{t+1})-z'_t\|^2+\frac{1}{2\eta_x}\|x_t-x_{t+1}\|^2\\
    &+\frac{\eta_x}{4}\|\nabla_x K(x_t, y'^*_\delta(x_t), z'^*(x_t), u^*(x_t), v^*(x_t))\|^2-\eta_xL_K^2\|z'^*(x_t)-z'^*(x_t, v_{t+1})\|^2\\
    &-\eta_xL_K^2\|v_{t+1}-v^*(x_t)\|^2\\
    &+\pth{\frac{1}{4\eta_x}-\frac{L_K}{8}-\frac{L_d}{2}-L_q}\|x_{t+1}-x_t\|^2\\
    &+\pth{\frac{3}{4\eta_z}-\frac{L_K}{8}-\frac{L_d}{2}}\|z'_{t+1}-z'_t\|^2\\
    &+\pth{\frac{1}{4\eta_y}-\frac{L_K}{8}}\|y'_{t+1}-y'_t\|^2,
\end{align*}
and
\begin{align*}
    &V_t-V_{t+1}\\
    \geq&\pth{\frac{1}{2\eta_x}+\frac{1}{4\eta_x}-\frac{1}{4\eta_x}-\frac{L_K}{8}-\frac{L_d}{2}-L_q}\|x_{t+1}-x_t\|^2\\
    &+\pth{\frac{1}{4\eta_y}-\frac{L_K}{8}}\|y'_{t+1}-y'_t\|^2-\pth{5\eta_xL_K^2+\frac{\eta_u \sigma_{\max}^2(A)}{2}}\|y'^*_\delta(x_t,u_{t+1})-y'_t\|^2\\
    &+\pth{\frac{3}{4\eta_z}-\frac{L_K}{8}-\frac{L_d}{2}}\|z'_{t+1}-z'_t\|^2-\pth{5\eta_xL_K^2+\eta_v \sigma_{\max}^2(A)}\|z'^*(x_t,v_{t+1})-z'_t\|^2\\
    &+\frac{\eta_u}{4}\|Bx_t+A'y'_t-b\|^2+\frac{\eta_u}{2}\|Bx_t+A'y'^*_\delta(x_t,u_{t+1})-b\|^2-5\eta_xL_K^2\|y_\delta'^*(x_t,u_{t+1})-y'^*_\delta(x_t)\|^2\\
    &+\frac{\eta_v}{4}\|Bx_t+A'z'_t-b\|^2+\eta_v\|Bx_t+A'z'^*(x_t,v_{t+1})-b\|^2-\eta_xL_K^2\|z'^*(x_t, v_{t+1})-z'^*(x_t)\|^2\\
    &+\frac{\eta_x}{4}\|\nabla_x K(x_t, y'^*_\delta(x_t), z'^*(x_t), u^*(x_t), v^*(x_t))\|^2-5\eta_x L_K^2\|u_{t+1}-u^*(x_t)\|^2-\eta_x L_K^2\|v_{t+1}-v^*(x_t)\|^2\\
    \geq&\pth{\frac{1}{2\eta_x}-\frac{L_K}{8}-\frac{L_d}{2}-L_q}\|x_{t+1}-x_t\|^2\\
    &+\pth{\frac{1}{4\eta_y}-\frac{L_K}{8}-\frac{20\eta_xL_K^2+2\eta_u\sigma_{\max}^2(A)}{\mu_y^2\eta_y^2}-15\eta_x L_K^2\sigma^2_{uy} }\|y'_{t+1}-y'_t\|^2\\
    &+\pth{\frac{3}{4\eta_z}-\frac{L_K}{8}-\frac{L_d}{2}-\frac{20\eta_xL_K^2+4\eta_v\sigma_{\max}^2(A)}{\mu_z^2\eta_z^2}-3\eta_x L_K^2\sigma^2_{vz}}\|z'_{t+1}-z'_t\|^2\\
    &+\pth{\frac{\eta_u}{2}-5\sigma_y^2\eta_xL_K^2-15\eta_x L_K^2\sigma^2_{u2}}\|Bx_t+A'y'^*_\delta(x_t,u_{t+1})-b\|^2\\
    &+\pth{\eta_v-\sigma_z^2\eta_xL_K^2-3\eta_x L_K^2\sigma^2_{v2}}\|Bx_t+A'z'^*(x_t,v_{t+1})-b\|^2\\
    &+\pth{\frac{\eta_u}{4}-15\eta_x L_K^2\sigma^2_{u1}}\|Bx_t+A'y'_t-b\|^2+\pth{\frac{\eta_v}{4}-3\eta_x L_K^2\sigma^2_{v1}}\|Bx_t+A'z'_t-b\|^2+\frac{\eta_x}{4}\|\nabla \phi_\delta(x)\|^2\\
\end{align*}
where the last equality is due to \Cref{lemma: error bounds}, \ref{lemma: uv error bounds} and $\nabla_x K(x_t, y'^*_\delta(x_t), z'^*(x_t), u^*(x_t), v^*(x_t))=\nabla \phi_\delta(x)$.

\end{proof}

\subsection*{Proof of \Cref{thm: Bx+Ay}}

When $\delta\leq \mu_g/(2l_{f,1})$, $0\leq \rho_1\leq \frac{\mu_g-\delta l_{f,1}}{\sigma_{\max}^2(A)}$, $0\leq \rho_2\leq \frac{\mu_g}{\sigma_{\max}^2(A)}$, $\eta_y=1/(4L_K)$, $\eta_z=2/(L_K+4L_d)$, $\eta_x=\min\{\eta_y\mu_y^2/(640L_K^2), \eta_z\mu_z^2/(640L_K^2), \eta_u/(240(\sigma_y^2+\sigma_{u2}^2+\sigma_{u1}^2)L_K^2), \eta_v/(240(\sigma_z^2+\sigma_{v2}^2+\sigma_{v1}^2)L_K^2), 2/(L_K+4L_d+8L_q), 1/(1920\eta_yL_K^2\sigma^2_{uy}), 1/(1920\eta_zL_K^2\sigma^2_{uz})\}$, $\eta_u=\eta_y\mu_y^2/(256\sigma_{\max}^2(A))$, $\eta_v=\eta_z\mu_z^2/(256\sigma_{\max}^2(A))$, according to \eqref{nabla h 2}, we have
 \begin{align}
    \frac{1}{T}\sum_{t=0}^{T-1}\|\nabla \phi_\delta(x_{t})\|^2\leq \frac{4}{T\eta_x}(V_0-\min_tV_t )
\end{align}

Note that 
\begin{align*}
    V_t=&\frac{1}{4}K(x_t, y'_t, z'_t, u_t, v_t)+2q(x_t,v_t)-d(x_t,z'_t, u_t, v_t)\\
    \geq& 2q(x_t,v_t)-\frac{3}{4}d(x_t,z'_t, u_t, v_t)\geq \frac{5}{4}q(x_t,v_t)\geq \frac{5}{4}\phi_\delta(x_t)\geq \frac{5\delta\Phi^*}{4}-\frac{5\delta^2 l^2_{f,0}}{8\mu_g}
\end{align*}
Therefore, when $\delta=\Theta(\epsilon)$, with $T=O(\epsilon^{-4})$, we have $t\in[T]$, such that $\|\nabla \Phi_\delta(x_{t})\|=\|\frac{1}{\delta}\nabla \phi_\delta(x_{t})\|\leq \epsilon$.

Moreover, if we have $x_0, y_0, z_0, u_0, v_0$ such that
\begin{align*}
    \|y_0-y^*_\delta(x_0)\|\leq O(\delta)\\
    \|u_0-u^*_\delta(x_0)\|\leq O(\delta)\\
    \|z_0-z^*(x_0)\|\leq O(\delta)\\
    \|v_0-v^*(x_0)\|\leq O(\delta)
\end{align*}
Then, set $\alpha_0=h(x_0,y_0)$, $\beta_0=h(x_0,z_0)$, we can easily prove that
\begin{align*}
        &\|Bx_0+A'y'^*_\delta(x_0,u_0)-b\|\leq \|Bx_0+A'y'^*_\delta(x_0,u^*_\delta(x_0))-b\|+\sigma_{\max}(A')\sigma_{yu}\|u_0-u^*_\delta(x_0)\|\leq O(\delta)\\
        &\|Bx_0+A'y_0'-b\|\leq \sigma_{\max}(A')\|y'_0-y'^*(x_0)\|\leq O(\delta)
\end{align*}
and similarly, 
\begin{align*}
        \|Bx_0+A'z'^*(x_0,v_0)-b\|\leq O(\delta)\\
        \|Bx_0+A'z_0'-b\|\leq O(\delta).
\end{align*}
Then, we have
    \begin{align*}
        V_0=&\frac{1}{4}[\delta f(x_0, y_0)+ (g(x_0, y_0)-g(x_0, z_0))+u_0^\top(Bx_0+A'y_0'-b) -v_0^\top(Bx_0+A'z_0'-b)\\
        &+\frac{\rho_1}{2}\|Bx_0+A'y_0'-b\|^2-\frac{\rho_2}{2}\|Bx_0+A'z_0'-b\|^2]\\
        &+2[\delta f(x_0, y^*_\delta(x_0))+ (g(x_0, y^*_\delta(x_0))-g(x_0, z'^*(x_0,v_0))) -v_0^\top(Bx_0+A'z'^*(x_0,v_0)-b)\\
        &-\frac{\rho_2}{2}\|Bx_0+A'z'^*(x_0,v_0)-b\|^2]-[\delta f(x_0, y'^*_\delta(x_0,u_0))+ (g(x_0, y'^*_\delta(x_0,u_0))-g(x_0, z_0))\\
        &+u_0^\top(Bx_0+A'y'^*_\delta(x_0,u_0)-b) -v_0^\top(Bx_0+A'z_0'-b)\\
        &+\frac{\rho_1}{2}\|Bx_0+A'y'^*_\delta(x_0,u_0)-b\|^2-\frac{\rho_2}{2}\|Bx_0+A'z_0'-b\|^2]\\
        \leq& \frac{5\delta \Phi(x_0)}{4}+O(1)l_{f,1}(\|y_0-y^*(x_0)\|+\|y^*_\delta(x_0)-y^*(x_0)\|+\|y^*_\delta(x_0,u_0)-y^*(x_0)\|)\\
        &+O(1)C_g(\|z_0-y_0\|+\|y^*_\delta(x_0)-z^*(x_0,v_0)\|+\|y^*_\delta(x_0,u_0)-z_0\|)\\
        &+O(\|Bx_0+A'z_0'-b\|+\|Bx_0+A'z_0'-b\|^2+\|Bx_0+A'y_0'-b\|+\|Bx_0+A'y_0'-b\|^2\\
        &+\|Bx_0+A'z'^*(x_0,v_0)-b\|+\|Bx_0+A'z'^*(x_0,v_0)-b\|^2\\
        &+\|Bx_0+A'y'^*_\delta(x_0,u_0)-b\|+\|Bx_0+A'y'^*_\delta(x_0,u_0)-b\|^2)\\
        \leq&\frac{5\delta \Phi(x_0)}{4}+O(\delta)
    \end{align*}
where $G=\max\{g(x_0, y_0), g(x_0, z_0), g(x_0, y^*_\delta(x_0),g(x_0, z^*(x_0,v_0)), g(x_0, y^*_\delta(x_0,u_0)\}$, $\Cc=\{y\in \R^{d_y}| g(x_0, y)\leq G\}$,
$C_g=\sup_{y\in \Cc}\nabla_y g(x_0,y)$. Since $g(x_0, y)$ is strongly convex w.r.t $y$, its sub-level set $\Cc$ is compact and convex. Moreover, since $g$ is Lipschitz
smooth, its gradient in this compact set $\Cc$ is upper bounded by an $O(1)$ constant $C_g$.

We can notice that
\begin{align*}
V_0-\min_tV_t\leq   \frac{5\delta [\Phi(x_0)-\Phi^*]}{4}+O(\delta)=O(\epsilon)
\end{align*}
and
\begin{align*}
    \frac{1}{T}\sum_{t=0}^{T-1}\|\nabla \Phi_\delta(x_{t})\|^2= \frac{1}{T\delta^2}\sum_{t=0}^{T-1}\|\nabla \phi_\delta(x_{t})\|^2 \leq \frac{4}{T\eta_x\delta^2}(V_0-\min_tV_t )=O\pth{\frac{\epsilon^{-1}}{T}}.
\end{align*}
Therefore, we can find an $\epsilon$-stationary point of $\Phi_\delta(x)$ with a complexity of $O(\epsilon^{-3})$.

\subsection*{Proof of \Cref{coro: Bx+Ay}}
When $h(x,y)=Bx+Ay-b$, $A$ is full row rank and under \Cref{assumption: phi}, \ref{assumption: smooth}, \ref{assumption: sc} and $\delta=O(\epsilon)\leq \mu_g/(2l_{f,1})$ if we apply PGD for $\max_{v\in \R_+}\min_{z \in \R^{d_y}}g(x_0,y)+v^\top(Bx_0+Az-b)$ with a fixed $x_0$, then according to Theorem 6 in \cite{jiang2024primal}, we can find $\hat v$, $\hat z$ such that
\begin{align*}
    \|v_0-v^*(x_0)\|\leq \delta\\
    \|z_0-z^*(x_0)\|\leq \delta
\end{align*}
with a complexity of $O(\log(\epsilon^{-1}))$.

Set $y_0=z_0=\hat z$, $u_0=v_0=\hat v$, $\alpha_0=h(x_0,y_0)$, $\beta_0=h(x_0,z_0)$. We have
\begin{align*}
        \|y_0-y^*_\delta(x_0)\|\leq O(\delta)\\
        \|Bx_0+A'y'^*_\delta(x_0,u_0)-b\|\leq O(\delta)\\
        \|Bx_0+A'y_0'-b\|\leq O(\delta)\\
        \|u_0-u^*_\delta(x_0)\|\leq O(\delta)\\
        \|Bx_0+A'z'^*(x_0,v_0)-b\|\leq O(\delta)\\
        \|Bx_0+A'z_0'-b\|\leq O(\delta)\\
\end{align*}

with $x_0, y'_0, z'_0, u_0, v_0$ as initial points and apply \Cref{alg1}, according to \Cref{thm: Bx+Ay}, we can find an $\epsilon$-stationary point of $\Phi_\delta$
with a complexity of $O(\epsilon^{-3})$.

Thus, the total complexity is $O(\epsilon^{-3}+\log(\epsilon^{-1}))=O(\epsilon^{-3})$.

\section{Detailed Experimental Settings and Additional Experiments}\label{app: exp}
We adapt and modify the code from \cite{jiang2024primal}. The experiments on the toy example and hyperparameter optimization for SVM are conducted on an AMD EPYC 9554 64-Core Processor. The experiments on transportation network design are conducted on an Intel(R) Xeon(R) Platinum 8375C CPU.
For the toy example, we set the hyperparameters for our algorithm as $\delta=0.1, \eta_x=\eta_y=\eta_z=\eta_u=\eta_v=0.01, \rho_1=\rho_2=1$. 

\subsection{Hyperparameter optimization in SVM}

Support Vector Machines (SVMs) construct a machine learning model by identifying the best possible hyperplane that maximizes the separation margin between data points from different classes. In a hard-margin SVM, no misclassification is allowed, ensuring that all samples are correctly classified. Conversely, soft-margin SVMs allow certain samples to be misclassified to accommodate cases where a perfect separation is not feasible. To achieve this, slack variables $\xi$
are introduced to quantify classification violations for each sample. 

We consider the same problem formulation as in \cite{jiang2024primal}, which is
\[
\min_{c} \quad \mathcal{L}_{\mathcal{D}_\text{val}}(w^*, b^*) = \sum_{(z_\text{val}, l_\text{val}) \in \mathcal{D}_\text{val}} \exp \left( 1 - l_\text{val} \left( z_\text{val}^\top w^* + b^* \right) \right) + \frac{1}{2} \|c\|^2
\]
\begin{align*}
w^*, b^*, \xi^* = \arg\min_{w,b,\xi} \quad \frac{1}{2} \|w\|^2\\
\text{s.t.} \quad l_{\text{tr},i} (z_{\text{tr},i}^\top w + b) \geq 1 - \xi_i, \quad &\forall i \in \{1, \dots, |\mathcal{D}_\text{tr}|\}\\
\xi_i \leq c_i, \quad &\forall i \in \{1, \dots, |\mathcal{D}_\text{tr}|\}.
\end{align*}

where $\mathcal{D}_\text{tr}\triangleq\{(z_{\text{tr},i}, l_{\text{tr},i})\}_{i=1}^{|\mathcal{D}_\text{tr}|}$ is the training dataset, $\mathcal{D}_\text{val}\triangleq\{(z_{\text{val},i}, l_{\text{val},i})\}_{i=1}^{|\mathcal{D}_\text{val}|}$ is the validation dataset,  with $z_{\text{tr},i}$ ($z_{\text{val},i}$) being the features of sample $i$ and $l_{\text{tr},i}$ ($l_{\text{val},i}$) being its corresponding labels. The hyperparameter $c_i$ is introduced to bound the soft margin violation $\xi_i$.
Note that the LL problem is equivalent to
\begin{align*}
w^*, b^* = \arg\min_{w,b,\xi} \quad \frac{1}{2} \|w\|^2\\
\text{s.t.} \quad l_{\text{tr},i} (z_{\text{tr},i}^\top w + b) \geq 1 - c_i, \quad &\forall i \in \{1, \dots, |\mathcal{D}_\text{tr}|\}
\end{align*}
Then upper level variables are $c$, and the lower level variables are $w, b$. Thus, we have a coupled linear constraint for the lower level problem.

We compared our algorithm SFLCB with GAM \citep{xu2023efficient}, LV-HBA \citep{yao2024constrained}, BLOCC \citep{jiang2024primal}, and BiC-GAFFA \citep{yao2024overcoming} on the diabetes dataset. For GAM,  we follow the same implementation approach and hyperparameters as \cite{jiang2024primal}, setting $\alpha=0.05, \epsilon=0.005$ and using a different formulation as introduced in \cite{xu2023efficient}. For BLOCC and SFLCB, we set the LL objective as $\frac{1}{2}\|w\|^2+\frac{\mu}{2}b^2$, where $\mu=0.01$ serves as a regularization term to make the LL problem strongly convex w.r.t $w,b$.
For LV-HBA, we set $\alpha=0.01, \gamma_1=0.1, \gamma_2=0.1, \eta=0.001$. For BLOCC, we set $\gamma=12, \eta=0.01, T=20, T_y=20, \eta_{1g}=0.001, \eta_{1F}=0.00001, \eta_{2g}=0.0001, \eta_{2F}=0.0001$. 
For BiC-GAFFA, we set $p=0.3, \gamma_1=10, \gamma_2=0.01, \eta=0.01, \alpha=0.001, \beta=0.001, c_0=10, R=10$. For our algorithm SFLCB, we set $\delta=0.01, \eta_x=\eta_y=\eta_z=\eta_u=\eta_v=0.001, \rho_1=\rho_2=0.01$. The hyperparameters of LV-HBA, BLOCC are the same as those used in \cite{jiang2024primal} and the hyperparameters of BiC-GAFFA are the same as those used in their original paper \citep{yao2024overcoming}. The experiments are conducted across 10 different random train-validation-test splits, and the average results along with one standard deviation are reported in \Cref{fig: svm}.

\subsection{Transportation network design}
We consider the same setting as in \cite{jiang2024primal}. In this setting, we act as the operator to design a new network that connects a set of stations $S$. Passengers then decide whether to use this network based on their rational decisions (lower level). The objective is to maximize the operator's benefit (upper level). The operator can select a set of potential links $\mathcal{A}\subseteq S\times S$, and for each link $(i,j)\in \mathcal{A}$, determine its capacity $x_{ij}$. A link is constructed if $x_{ij}>0$; a larger $x_{ij}$ attracts more travelers, generating more revenue but incurring higher construction costs $c_{ij}$. Passenger demand is defined over a set of origin-destination pairs $\mathcal{K}\subseteq S\times S$. For each $(o,d)\in \mathcal{K}$, there is a known traffic demand $w_{od}$ and existing travel times $t^{\rm ext}_{od}$. We assume a single existing network. The fraction of passengers choosing the new network for each 
$(o,d)$ pair is denoted by $y_{od}$, and $y^{ij}_{od}$ represents the proportion using link $(i,j)$.

We summarize the notation as follows. We keep the notation the same as in \cite{jiang2024primal}.
\begin{itemize}
    \item $x_{ij} \in \mathbb{R}_+$, the capacity of the new network for the link $(i,j) \in \mathcal{A}$.
    \item $y^{od} \in [0,1]$, the proportion of passengers from $(o,d) \in \mathcal{K}$ choosing the new network for their travel.
    \item $y_{ij}^{od} \in [0,1]$, the proportion of passengers from $(o,d) \in \mathcal{K}$ choosing the new network and use the link $(i,j) \in \mathcal{A}$
    \item $x=\{x_{ij}\}_{\forall (i,j)\in\mathcal{A}}$ are the upper-level variables to be optimized. 
    \item $\mathcal{X} = \mathbb{R}^{|\mathcal{A}|}_+$ represents the domain of $x$.
    \item $y=\{y^{od}, \{y_{ij}^{od}\}_{\forall (i,j)\in\mathcal{A}}\}_{\forall (o,d)\in\mathcal{K}}$ are the lower-level variables to be optimized
    \item $\mathcal{Y} = [\varepsilon,1-\varepsilon]^{|\mathcal{K}|}\times[\varepsilon,1-\varepsilon]^{|\mathcal{A}||\mathcal{K}|}$, where $\varepsilon$ is a small positive number, represents the domain of $y$.
    \item $w^{od}$, the total estimated demand for $(o,d) \in \mathcal{K}$.
    \item $m^{od}$, the revenue obtained by the operator from a passenger in $(o,d) \in \mathcal{K}$.
    \item $c_{ij}$, the construction cost per passenger for link $(i,j) \in \mathcal{A}$.
    \item $t_{ij}$, the travel time for link $(i,j) \in \mathcal{A}$.
    \item $t_{\rm ext}^{od}$, travel time on the existing network for passengers in $(o,d) \in \mathcal{K}$.
    \item $\omega_t<0$, the coefficient associated with the travel time for passengers. 
\end{itemize}
With these notions, we can introduce the bilevel formulations of this problem. 
At the upper level, the network operator seeks to maximize the overall profit by attracting more passengers while minimizing construction costs; thus, its objective is
\begin{equation}\label{eq:appTransSingleEntropy}
\min_{x\in \mathcal{X}}f(x,y^*_g(x)) := - \Bigg( \underbrace{\sum_{\forall (o,d) \in \mathcal{K}}m^{od}y^{\rm od*}(x) }_{\text{profit}} -  \underbrace{\sum_{\forall (i,j) \in \mathcal{A}}c_{ij}x_{ij} }_{cost} \Bigg),    
\end{equation}
where $y^{\rm od*}(x)$ are optimal lower-level passenger flows.
For the lower-level, the objective is defined as finding the flow variables that maximize passenger utility and minimize flow entropy cost.
\begin{align}\label{eq:LowerLevelObjectiveTransportation}
    \min_{y \in \mathcal{Y}} g(x,y):= & -\Bigg( \underbrace{\sum_{(o,d) \in \mathcal{K}}\sum_{(i,j) \in \mathcal{A}} w^{od}\omega_tt_{ij}y_{ij}^{od} + \sum_{(o,d) \in \mathcal{K}} w^{od}\omega_tt_{\rm ext}^{od} (1-y^{od})}_{\rm passengers~ utility}  \\ 
      & \qquad + \underbrace{\sum_{(o,d) \in \mathcal{K}} w^{od}y^{od}(\ln (y^{od}) - 1)  +  \sum_{(o,d) \in \mathcal{K}} w^{od}(1-y^{od})(\ln (1 - y^{od} ) - 1)}_{\rm flow~ entropy~cost}\Bigg) \nonumber \\ \label{eq:flow_conserv_cons}
           \qquad\qquad \text{s.t.}&\;\; \sum_{\forall j | (i,j) \in \mathcal{A}}y_{ij}^{od} - \sum_{\forall j | (j,i) \in \mathcal{A}}y_{ji}^{od} = 
     \begin{cases}
         y^{od} \;\quad \text{if} \; i=o \\ -y^{od} \;\ \text{if} \; i = d \\ 0 \;\;\qquad \text{otherwise}
     \end{cases}\forall i,(o,d) \in \mathcal{S} \times \mathcal{K} &&
     \\ \label{eq:capacity_cons}
     & \qquad\qquad\;\;\;\;\;\;\sum_{\forall (o,d) \in \mathcal{K}}w^{od}y_{ij}^{od} \leq x_{ij} \qquad\qquad\;\; \forall (i,j) \in \mathcal{A} &&
\end{align}
where \eqref{eq:flow_conserv_cons} are the flow-conservation constraints and \eqref{eq:capacity_cons} are the capacity constraints.

We consider the same 3 nodes and 9 nodes settings as in \cite{jiang2024primal} and use the same lower level feasibility criteria. The hyperparameter settings used in \Cref{fig: Transportation} are listed below. For the 3 nodes setting, we set the hyperparameters of our method SFLCB as: $\delta=0.1$, $\rho_1=\rho_2=1000$, $\eta_x=\eta_y=\eta_z=\eta_u=\eta_v=3e-4$, and $T=30000$. For the 9 nodes setting, we set the hyperparameters of our method SFLCB as: $\delta=0.25$, $\rho_1=\rho_2=50$, $\eta_x=\eta_y=\eta_z=\eta_u=\eta_v=3e-5$, and $T=300000$.  For BLOCC, we used the same hyperparameters as those used in \cite{jiang2024primal}.

\end{document}